\newtheorem{defi}{Definition}[section]
\newtheorem{example}[defi]{Example}
\newtheorem{teo}[defi]{Theorem}
\newtheorem{coro}[defi]{Corollary}
\newtheorem{lema}[defi]{Lemma}
\newtheorem{pro}[defi]{Proposition}
\newtheorem{rmk}[defi]{Remark}
\newcommand\blfootnote[1]{%
  \begingroup
  \renewcommand\thefootnote{}\footnote{#1}%
  \addtocounter{footnote}{-1}%
  \endgroup
}
\providecommand{\keywords}[1]{\textbf{\textit{Keywords---}} #1}
\providecommand{\MSC}[1]{\textbf{\textit{MSC---}} #1}
\title{Hopf-Galois module structure of quartic Galois extensions of $\mathbb{Q}$}
\author[1, 2]{Daniel Gil-Mu\~noz}
\author[1]{Anna Rio}
\affil[1]{\footnotesize Departament de Matem\`atiques, Universitat Polit\`ecnica de Catalunya, Edifici Omega, Jordi Girona, 1-3, 08034 Barcelona}
\affil[2]{\footnotesize Charles University, Faculty of Mathematics and Physics, Department of Algebra, Sokolovska 83, 18600 Praha 8, Czech Republic}
\date{\vspace{-5ex}}
\begin{document}

\newcommand{\Addresses}{
\bigskip
\footnotesize

D. Gil-Muñoz, \textsc{Charles University, Faculty of Mathematics and Physics, Department of Algebra, Sokolovska 83, 18600 Praha 8, Czech Republic}\par\nopagebreak
\textit{E-mail address}: \texttt{daniel.gil-munoz@mff.cuni.cz}

\medskip

A. Rio, \textsc{Departament de Matem\`atiques, Universitat Polit\`ecnica de Catalunya, Edifici Omega, Jordi Girona, 1-3, 08034 Barcelona}\par\nopagebreak
\textit{E-mail address}, \texttt{ana.rio@upc.edu}

}

\maketitle

\begin{abstract}
Given a quartic Galois extension $L/\mathbb{Q}$ of number fields and a Hopf-Galois structure $H$ on $L/\mathbb{Q}$, we study the freeness of the ring of integers $\mathcal{O}_L$ as module over the associated order $\mathfrak{A}_H$ in $H$. For the classical Galois structure $H_c$, we know by Leopoldt's theorem that $\mathcal{O}_L$ is $\mathfrak{A}_{H_c}$-free. If $L/\mathbb{Q}$ is cyclic, it admits a unique non-classical Hopf-Galois structure, whereas if it is biquadratic, it admits three such Hopf-Galois structures. In both cases, we obtain that freeness depends on the solvability in $\mathbb{Z}$ of certain generalized Pell equations. We shall translate some results on Pell equations into results on the $\mathfrak{A}_H$-freeness of $\mathcal{O}_L$.
\end{abstract}

\keywords{Hopf-Galois structure, Freeness over the associated order}

\MSC{11R33, 16T05}.

\blfootnote{The first author was supported by Czech Science Foundation, grant 21-00420M, and by Charles University Research Centre program UNCE/SCI/022.

Email addresses: daniel.gil-munoz@mff.cuni.cz, ana.rio@upc.edu}

\section{Introduction}

Let $L/K$ be a Galois extension of number fields with group $G$. We know by Normal Basis Theorem that $L/K$ has always a normal basis, that is, a basis formed by the Galois conjugates of a single element of $L$ (see for example \cite[Theorem 3.2.12]{cohen}). Classical Galois module theory is then motivated by the problem of finding a normal basis of the ring of integers $\mathcal{O}_L$ of $L$, i.e. a normal integral basis. This amounts to determining whether $\mathcal{O}_L$ is free of rank one as $\mathcal{O}_K[G]$-module. A partial answer is provided by Noether's theorem (see \cite{noether}), which in its global form states that $\mathcal{O}_L$ is $\mathcal{O}_K[G]$-locally free if $L/K$ is (at most) tamely ramified. Here the locally freeness property of $\mathcal{O}_L$ over $\mathcal{O}_K[G]$ means that the completed valuation ring $\mathcal{O}_{L,P}\coloneqq\mathcal{O}_L\otimes_{\mathcal{O}_K}\mathcal{O}_{K,P}$ at any prime ideal $P$ of $\mathcal{O}_K$ is free as module over $\mathcal{O}_{K,P}[G]\coloneqq\mathcal{O}_K[G]\otimes_{\mathcal{O}_K}\mathcal{O}_{K,P}$.

In order to study wildly ramified extensions, Leopoldt noted that the unique $\mathcal{O}_K$-order over which $\mathcal{O}_L$ can be free is its associated order $$\mathfrak{A}_{K[G]}=\{\lambda\in K[G]\,|\,\lambda\cdot\mathcal{O}_L\subseteq\mathcal{O}_L\}$$ in $K[G]$, which by definition is the maximal $\mathcal{O}_K$-order in $K[G]$ acting on $\mathcal{O}_L$ by means of the Galois action $\cdot$. However, in general $\mathcal{O}_L$ is not free over its associated order, and determining the structure of $\mathcal{O}_L$ as module over $\mathfrak{A}_{K[G]}$ is a problem of long-standing interest. Among the most celebrated results, Leopoldt's theorem asserts that the ring of integers of an abelian extension of $\mathbb{Q}$ is free over its associated order (see \cite{leopoldt}).

In \cite{childs2}, Childs obtained results on the Galois module structure of a ring of integers based on the Hopf algebra structure of $K[G]$. This motivated the introduction of Hopf-Galois theory in the study of the module structure of $\mathcal{O}_L$, which actually generalizes the setting provided by classical Galois module theory. A Hopf-Galois structure on a finite extension of fields $L/K$ is a finite-dimensional cocommutative $K$-Hopf algebra $H$ together with a $K$-linear action $\cdot$ of $H$ on $L$ endowing $L$ with left $H$-module algebra, such that the map $$\begin{array}{rccl}
    j\colon & L\otimes_KH & \longrightarrow & \mathrm{End}_K(L) \\
     & x\otimes h & \longrightarrow & j(x\otimes h)(y)=x(h\cdot y)
\end{array}$$ is a $K$-linear isomorphism. If $L/K$ admits a Hopf-Galois structure $(H,\cdot)$, we say that $L/K$ is $H$-Galois. For a Galois extension with group $G$, $K[G]$ together with the classical Galois action on $L$ is a Hopf-Galois structure, called the classical Galois structure of $L/K$. Accordingly, any other Hopf-Galois structure $(H,\cdot)$ on $L/K$ is called non-classical.

The Hopf-Galois structures on $L/K$ can be classified and described by making use of Greither-Pareigis theorem (see \cite{greitherpareigis}). This result is valid for arbitary separable extensions, but since we are not dealing with non-Galois extensions in this paper, we will state it in the Galois case. Concretely, it establishes a one-to-one correspondence between Hopf-Galois structures on $L/K$ and subgroups of $\mathrm{Perm}(G)$ whose action on $G$ is simply transitive and that are normalized by conjugation by $G$. Here, $G$ is embedded in $\mathrm{Perm}(G)$ by means of its left regular representation $\lambda\colon G\longrightarrow\mathrm{Perm}(G)$, defined by $\lambda(\sigma)(\tau)=\sigma\tau$. If $N$ is such a subgroup of $\mathrm{Perm}(G)$, the Hopf-algebra of the corresponding Hopf-Galois structure is given by $$H\coloneqq L[N]^G=\{x\in L[N]\,|\,\lambda(\sigma)x\lambda(\sigma)^{-1}=x\hbox{ for all }\sigma\in G\},$$ i.e, the $K$-subalgebra of $L[N]$ fixed by the action of $G$ on $L[N]$, defined by the classical Galois action on $L$ and by conjugation with elements of $G$ on $N$.

The approach provided by Hopf-Galois theory in the setting of Galois module theory consists in considering the structure of $\mathcal{O}_L$ as module over its associated order $$\mathfrak{A}_H=\{h\in H\,|\,h\cdot\mathcal{O}_L\subseteq\mathcal{O}_L\}$$ in $H$, which in complete analogy with the Galois case is the maximal $\mathcal{O}_K$-order in $H$ acting on $\mathcal{O}_L$, and the unique one over which $\mathcal{O}_L$ can be free. When it is so, letting a basis of $\mathfrak{A}_H$ act on a generator of such a module yields an analog of a normal integral basis of a Galois extension. We see then that $\mathfrak{A}_H$ plays the role of $\mathfrak{A}_{K[G]}$ as a ground ring of the module structure of $\mathcal{O}_L$. Among the questions provided by this approach, one may  
consider a Galois extension $L/K$ and explore the behaviour of $\mathcal{O}_L$ as $\mathfrak{A}_H$-module as $H$ runs through the different Hopf-Galois structures on $L/K$. Research has shown that there is no general answer to this problem, see for example the papers \cite{byottp2} and \cite{childs2} or the final comment of the book \cite{childs}. 

This is the context we work with in this paper. Concretely, we consider an absolute quartic Galois extension $L/\mathbb{Q}$ of number fields and study the module structure of $\mathcal{O}_L$ over the associated order in the Hopf-Galois structures on $L/\mathbb{Q}$. By the already mentioned Leopoldt's theorem, $\mathcal{O}_L$ is indeed free over its associated order in the classical Galois structure of $L/\mathbb{Q}$, so we shall focus in the non-classical Hopf-Galois structures. The classification of these is due to \cite[Theorem 2.5]{byottp2}: if the Galois group of $L/\mathbb{Q}$ is cyclic, there is a unique non-classical Hopf-Galois structure, whereas if it is elementary abelian, there are three. We will see their explicit form in Section \ref{secthopfstr}. To study the module structure of $\mathcal{O}_L$ in both cases, we will use the techniques introduced in \cite{gilrio}. The key ingredient of this method is a matrix $M(H,L)$ that encodes full information about the action of $H$ on $L$, which we call the matrix of the action. This procedure will be called the reduction method in the sequel. As it is rather computational, it is very well suited for extensions of low degree.

In Sections \ref{sectcyclicquarticQ} and \ref{sectbiquadQ}, we will use the reduction method to study the freeness in the unique non-classical Hopf-Galois structure of cyclic quartic extensions of $\mathbb{Q}$ and in the three non-classical Hopf-Galois structures of biquadratic extensions of $\mathbb{Q}$, respectively. In both cases, we will obtain that the freeness depends on the solvability in the integers of at least one of a pair of Diophantine equations of the form $x^2-Ny^2=D$ in $\mathbb{Z}$, which are particular cases of the commonly known as generalized Pell equations. This fact was already known for tamely ramified biquadratic extensions of $\mathbb{Q}$ due to a result of Truman (see \cite[Proposition 6.1]{truman}). The connection of the freeness of $\mathcal{O}_L$ over $\mathfrak{A}_H$ with the solvability of generalized Pell equations is explored in detail in Section \ref{sectgenpell}. It is remarkable that our techniques provide results on the freeness for a global extension without necessity of knowing the behaviour of its completions. In Section \ref{sectexampleslocalglobal}, we give an example of a cyclic quartic extension $L/\mathbb{Q}$ such that $\mathcal{O}_L$ is $\mathfrak{A}_H$-locally free but not $\mathfrak{A}_H$-free.

We have omitted the huge computations in Sections \ref{sectredcyclic}, \ref{sectredbiquad1}, \ref{sectredbiquad2} and \ref{sectredbiquad3}, which consist in finding the Hermite normal form of $16\times 4$ matrices with rational coefficients (according to the definition of Hermite normal form in the paragraph following Theorem \ref{teoredmatrix}).

\section{Matrices and Galois module structure}\label{redmethod}

In this section we recall and reformulate the results in \cite[Section 4]{gilrio} to describe the Hopf-Galois module structure of an $H$-Galois extension $L/K$ such that $K$ is the fraction field of a PID. For simplicity, we will take $K=\mathbb{Q}$, so we can think of our extension just as a number field. Concretely, we establish the criterion and the subsequent procedure we will use so as to study the freeness of the ring of integers $\mathcal{O}_L$ as $\mathfrak{A}_H$-module. The results obtained can be naturally translated to the general case.

\subsection{Matrix of the action}

In this approach, we regard the elements of $H$ as the endomorphisms of $L$ induced by their action on $L$. Namely, we consider the $\mathbb{Q}$-linear representation 
$$
\begin{array}{cccc}
\rho_H\colon&H&\longrightarrow&\mathrm{End}_{\mathbb{Q}}(L)\\
&h&\longrightarrow&(x\mapsto h\cdot x)\end{array}$$ of the Hopf algebra $H$ corresponding to the structure of $L$ as $H$-module.
Note that $\rho_H$ is just the restriction of the map $j\colon H\otimes_{\mathbb{Q}} L\longrightarrow\mathrm{End}_{\mathbb{Q}}(L)$ to $H\otimes_{\mathbb{Q}}1_L\cong H$.

The main object we will consider is the matrix of $\rho_H$ as linear map. In order to define it correctly, let us fix $\mathbb{Q}$-bases $W=\{w_i\}_{i=1}^n$ and $B=\{\gamma_j\}_{j=1}^n$ of $H$ and $L$. The choice of the basis $B$ of $L$ fixes an identification of $\mathrm{End}_{\mathbb{Q}}(L)$ with $\mathcal{M}_n(\mathbb{Q})$. Let us fix the canonical basis $\{E_{ij}\}_{i,j=1}^n$ of $\mathcal{M}_n(\mathbb{Q})$, given by $E_{ij}=(\delta_{ik}\delta_{jl})_{k,l=1}^n$ for every $1\leq i,j\leq n$, where $\delta_{ab}$ is the Kronecker delta. Then, we can consider the corresponding $\mathbb{Q}$-basis $\Phi=\{\varphi_i\}_{i=1}^{n^2}$ of $\mathrm{End}_{\mathbb{Q}}(L)$, whose elements are described as follows: For every $1\leq i\leq n^2$, there are $1\leq k,j\leq n$ such that $i=k+(j-1)n$. Then, let $\varphi_i$ be the map that sends $\gamma_j$ to $\gamma_k$ and the other $\gamma_l$ to $0$.

\begin{defi} The {matrix of the action} of $H$ on $L$ with respect to the bases $W$ and $B$ is the matrix $M(H_W,L_B)$ of the linear map $\rho_H\colon H\longrightarrow\mathrm{End}_{\mathbb{Q}}(L)$ arising from the choice of the basis $W$ in $H$ and the basis $\Phi$ in $\mathrm{End}_{\mathbb{Q}}(L)$.
\end{defi}

When there is no ambiguity, we will write $M(H_W,L_B)=M(H,L)$. The definition above is completely equivalent to the one given in \cite[Definition 3.1]{gilrio}. Concretely, if we write \begin{equation}\label{blocksmatrix}
    M_j(H,L)\coloneqq\begin{pmatrix}
|& | &\dots  &|  \\
(w_1\cdot\gamma_j)_B&(w_2\cdot\gamma_j)_B&\dots &(w_n\cdot\gamma_j)_B \\
|& |&\dots & |\\
\end{pmatrix}\in \mathcal{M}_n(\mathbb{Q}),
\end{equation} for every $1\leq j\leq n$, then the matrix of the action is $$M(H,L)=\begin{pmatrix}
M_1(H,L) \\ \hline
\cdots \\ \hline
M_{n}(H,L) \end{pmatrix}\in\mathcal{M}_{n^2\times n}(\mathbb{Q}).$$

This expression is suitable to visualize the dependence of $M(H,L)$ on the basis of $H$. Indeed, if $W'$ is another $\mathbb{Q}$-basis of $H$, then $M_j(H_{W'},L_B)=M_j(H_W,L_B)P_W^{W'}$ for every $1\leq j\leq n$. With regard to the dependence on the basis of $L$, it is more convenient to use another matrix from which $M(H,L)$ can be recovered.

\begin{defi} We define the {Gram matrix} of $H$ on $L$ with respect to the bases $W$ and $B$ as the matrix $$G(H_W,L_B)=\begin{pmatrix}
w_1\cdot\gamma_1 & w_1\cdot\gamma_2 & \dots & w_1\cdot\gamma_n \\
w_2\cdot\gamma_1 & w_2\cdot\gamma_2 & \dots & w_2\cdot\gamma_n \\
\vdots & \vdots & \ddots & \vdots \\
w_n\cdot\gamma_1 & w_n\cdot\gamma_2 & \dots & w_n\cdot\gamma_n
\end{pmatrix}\in\mathcal{M}_n(L).$$
\end{defi}

Again, we will usually write $G(H,L)=G(H_W,L_B)$. Note that to recover $M(H,L)$ is enough to identify the entries with the column vectors of the coordinates of its entries with respect to $B$. Now, if $B'$ is another $\mathbb{Q}$-basis of $L$, we have the relation $G(H_W,L_{B'})=G(H_W,L_B)P_B^{B'},$ which gives a practical procedure to change the basis of $L$ in $M(H,L)$.

\subsection{Reduced matrices}

With the previous setting in mind, {let us assume that $B$ is an integral basis of $L$}. Then, the image of $\mathfrak{A}_H$ by $\rho_H$ coincides with $\mathrm{End}_{\mathbb{Z}}(\mathcal{O}_L)$. Note that this is equivalent to the statement of \cite[Theorem 3.3]{gilrio}, as the element $\rho_H(h)$ is obtained from applying the matrix $M(H,L)$ to the vector of coordinates of $h$ with respect to $W$. This means that $M(H,L)$ allows us to test membership of the associated order for elements of $H$. Now, the key idea is that we can {reduce integrally the matrix $M(H,L)$ to an invertible $n\times n$ matrix}. Namely:

\begin{teo}\label{teoredmatrix} There is a matrix $D\in\mathcal{M}_n(\mathbb{Q})$ and a unimodular matrix $U\in\mathrm{GL}_{n^2}(\mathbb{Z})$ with the property that $$UM(H,L)=\begin{pmatrix}D \\ \hline \\[-2ex] O\end{pmatrix},$$ where $O$ is the zero matrix of $\mathcal{M}_{(m-n)\times n}(\mathbb{Q})$.
\end{teo}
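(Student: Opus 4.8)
The plan is to exploit the fact that $M(H,L)\in\mathcal{M}_{n^2\times n}(K)$ has full column rank $n$, since $\rho_H\colon H\to\mathrm{End}_K(L)$ is injective (the Hopf-Galois condition forces $j$, and hence its restriction $\rho_H$, to be injective). Working over the PID $\mathcal{O}_K$, I would clear denominators: choose $c\in\mathcal{O}_K\setminus\{0\}$ so that $cM(H,L)$ has all entries in $\mathcal{O}_K$, and then invoke the existence of the Hermite normal form (or equivalently row reduction via elementary operations over $\mathcal{O}_K$, which is where being a PID is used) for the integral matrix $cM(H,L)$. This produces a unimodular $U\in\mathrm{GL}_{n^2}(\mathcal{O}_K)$ such that $U\cdot cM(H,L)$ is upper triangular in staircase form with zero rows below. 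Setting $D$ to be $1/c$ times the top $n\times n$ block recovers the statement, with the bottom $m-n=n^2-n$ rows being zero.

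The key steps, in order: first, record that $\rho_H$ is injective so $\mathrm{rank}\,M(H,L)=n$; second, scale to an integral matrix $cM(H,L)$; third, apply Hermite normal form over $\mathcal{O}_K$ to obtain $U\in\mathrm{GL}_{n^2}(\mathcal{O}_K)$ with $U\,cM(H,L)=\left(\begin{smallmatrix}D'\\ O\end{smallmatrix}\right)$ where $D'\in\mathcal{M}_n(\mathcal{O}_K)$; fourth, observe that because the column rank is exactly $n$, the matrix $D'$ must itself be invertible over $K$ (a singular $D'$ would drop the rank of the whole product below $n$, contradicting that $U$ is invertible and $M(H,L)$ has rank $n$); fifth, set $D=\frac1c D'\in\mathrm{GL}_n(K)$.

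The main obstacle, such as it is, is purely bookkeeping: one must be careful that the Hermite normal form over a general PID (not just $\mathbb{Z}$) is available — this is standard, relying on the existence of gcd's and the Smith/Hermite reduction algorithm valid over any PID — and that the row operations used to achieve it are genuinely realized by left multiplication by a single matrix $U$ that is \emph{unimodular}, i.e.\ lies in $\mathrm{GL}_{n^2}(\mathcal{O}_K)$ rather than merely $\mathrm{GL}_{n^2}(K)$. There is no deep content; the nontrivial input is the rank count coming from the Hopf-Galois hypothesis, which guarantees that the triangular block $D$ is nondegenerate rather than just that \emph{some} $n\times n$ block survives. I would also remark that $D$ is not unique (it depends on the choice of $U$, hence on the normal form convention), but its image lattice $D\cdot\mathcal{O}_K^n\subseteq K^n$, and in particular the $\mathcal{O}_K$-module it generates, is an invariant — this is what makes the subsequent reduction method well-defined, though it is not strictly needed for the statement as phrased.
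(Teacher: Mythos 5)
Your proposal is correct and follows essentially the same route as the paper: scale $M(H,L)$ by a nonzero constant so that it has entries in $\mathcal{O}_K$ (the paper uses the content/primitive-part decomposition where you clear denominators, an immaterial difference), apply the Hermite reduction over the PID $\mathcal{O}_K$ (the paper cites Kaplansky's Theorems 3.2 and 3.5 for exactly the point you flag, namely that the reduction is realized by a single unimodular $U$), and scale back to get $D$. Your extra observation that $D$ is invertible because $\rho_H$ is injective is not needed for the statement itself, but it matches the remark the paper makes immediately afterwards.
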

\begin{proof}
By \cite[Theorem 3.2]{kaplansky}, every $2\times 1$ matrix with coefficients in a PID can be reduced by means of a unimodular matrix to a matrix of the same size whose lower entry is $0$ (i.e, every PID is a Hermite ring in the sense of Kaplansky). By \cite[Theorem 3.5]{kaplansky}, the statement holds for matrices with coefficients in $\mathbb{Z}$, which is a PID. Actually, this is not enough for our purposes, since the coefficients in $M(H,L)$ lie in $\mathbb{Q}$ (not necessarily in $\mathbb{Z}$). But we can think of $M(H,L)$ as a polynomial with $n^2$ variables, so it has well defined content $c$ and primitive part $M$, and we apply the aforementioned result to the primitive part, which is a matrix with coefficients in $\mathcal{O}_L$, giving the existence of $U\in\mathrm{GL}_{n^2}(\mathbb{Z})$ and $D'\in\mathcal{M}_n(\mathbb{Z})$ such that $$UM=\begin{pmatrix}D' \\ \hline \\[-2ex] O\end{pmatrix}.$$ Now, since $M(H,L)=cM$, if we call $D=cD'\in\mathcal{M}_n(\mathbb{Q})$, then $$UM(H,L)=\begin{pmatrix}D \\ \hline \\[-2ex] O\end{pmatrix},$$ as stated.
\end{proof}

A matrix $D$ as in the previous statement will be called a \emph{reduced matrix}. Clearly, it is not unique, as multiplication by a unimodular matrix of order $n$ gives another reduced matrix. Note that \cite[Theorem 3.5]{kaplansky} also gives that we can choose $D$ to be triangular. Actually, in practice, we will prefer a concrete type of triangular matrix: the Hermite normal form of $M(H,L)$.
Since the coefficients of $M(H,L)$ are not necessarily integer numbers, we define its Hermite normal form as its content times the Hermite normal form of its primitive part. 

Another important remark is that a reduced matrix is always invertible. This follows directly from the fact that the matrix $M(H,L)$ has rank $n$, which at the same time is due to the injectivity of $\rho_H$. Now, we know that $M(H,L)$ can be reduced to an invertible matrix 
using only elementary transformations such that they and their inverses preserve the ring structure (so that their concatenation is a unimodular matrix). This is what we meant before with integral reduction. 
Moreover, the reduction being integral implies that a reduced matrix also tests membership of the associated order among elements of $H$ as $M(H,L)$ does. Using this fact, we can prove the following:

\begin{teo} A reduced matrix $D$ of $M(H,L)$ is a change of basis matrix from a basis of $\mathfrak{A}_H$ to the basis $W$.
\end{teo}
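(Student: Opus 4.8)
The plan is to unwind what it means for $D$ to "test membership of the associated order". Recall from the discussion preceding Theorem~\ref{teoredmatrix} (and \cite[Theorem 3.3]{gilrio}) that for $h \in H$ with coordinate vector $h_W \in K^n$ with respect to $W$, one has $\rho_H(h) \in \mathrm{End}_{\mathcal{O}_K}(\mathcal{O}_L)$, equivalently $h \in \mathfrak{A}_H$, if and only if $M(H,L)\, h_W \in \mathcal{O}_K^{n^2}$, i.e.\ all $n^2$ coordinates of the product lie in $\mathcal{O}_K$. Since $U \in \mathrm{GL}_{n^2}(\mathcal{O}_K)$ is unimodular, multiplication by $U$ and by $U^{-1}$ preserves the lattice $\mathcal{O}_K^{n^2}$; hence $M(H,L)\, h_W \in \mathcal{O}_K^{n^2}$ if and only if $U M(H,L)\, h_W \in \mathcal{O}_K^{n^2}$. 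But $U M(H,L)$ is the block matrix with $D$ on top and zeros below, so the bottom $n^2 - n$ coordinates of $U M(H,L)\, h_W$ are automatically zero, and the condition reduces to $D\, h_W \in \mathcal{O}_K^n$. Thus $h \in \mathfrak{A}_H$ if and only if $D\, h_W \in \mathcal{O}_K^n$.

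Next I would translate this into a statement about bases. Since $D$ is invertible (a reduced matrix always is, as noted in the excerpt, because $M(H,L)$ has rank $n$ by injectivity of $\rho_H$), the map $h \mapsto D\, h_W$ is a $K$-linear isomorphism $H \to K^n$. Under it, $\mathfrak{A}_H$ corresponds exactly to the preimage of $\mathcal{O}_K^n$, i.e.\ $\mathfrak{A}_H = \{ h \in H : D\, h_W \in \mathcal{O}_K^n\}$. Equivalently, writing the coordinate isomorphism $H \cong K^n$, $h \mapsto h_W$, the order $\mathfrak{A}_H$ is carried to the $\mathcal{O}_K$-lattice $D^{-1}\mathcal{O}_K^n$ inside $K^n$. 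Therefore the columns of $D^{-1}$ form an $\mathcal{O}_K$-basis of that lattice, so the corresponding elements of $H$ (the $h$ whose coordinate vectors with respect to $W$ are the columns of $D^{-1}$) form an $\mathcal{O}_K$-basis $V = \{v_1, \dots, v_n\}$ of $\mathfrak{A}_H$. The matrix expressing the $v_i$ in terms of $W$ is precisely $D^{-1}$, hence the change-of-basis matrix from $V$ to $W$ is $D$ itself: writing $w_j = \sum_i D_{ij} v_i$. This is exactly the assertion that $D$ is a change of basis from a basis of $\mathfrak{A}_H$ to $W$.

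The one point that needs care — and which I expect to be the main (minor) obstacle — is the bookkeeping of conventions: whether "change of basis from a basis of $\mathfrak{A}_H$ to $W$" means the matrix whose columns are the coordinates of the $w_j$ in the new basis, or the transpose, and correspondingly whether the relevant matrix is $D$ or $D^{-1}$. This is purely a matter of fixing the convention consistently with how $M(H,L)$ acts on coordinate vectors (on the left, as set up in the paragraph before Theorem~\ref{teoredmatrix}); once that is pinned down, the identification is forced. A secondary subtlety is that $\mathcal{O}_K$ is a PID, so the lattice $D^{-1}\mathcal{O}_K^n$ is genuinely free of rank $n$ and the columns of $D^{-1}$ do furnish an honest basis (not merely a generating set); this is where the PID hypothesis on $\mathcal{O}_K$ is used, exactly as in the reduction of Theorem~\ref{teoredmatrix}. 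No deeper difficulty arises: the content of the statement is entirely in the equivalence $h \in \mathfrak{A}_H \iff D\, h_W \in \mathcal{O}_K^n$, which is immediate from unimodularity of $U$.
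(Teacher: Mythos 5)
Your argument is correct and is precisely the one the paper gestures at: the paper only remarks that unimodularity of $U$ makes $D$ test membership of $\mathfrak{A}_H$ exactly as $M(H,L)$ does and then defers to \cite[Theorem 3.5]{gilrio}, while you spell out the resulting identification $\mathfrak{A}_H=\{h: D\,h_W\in\mathcal{O}_K^n\}$ and read off the basis from the columns of $D^{-1}$. Your convention check is also consistent with the paper's later use of $[\mathfrak{A}_H:\mathfrak{H}]_{\mathcal{O}_K}=\langle\det D\rangle$, so nothing is missing.
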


This is an alternative way to write the statement of \cite[Theorem 3.5]{gilrio}. It gives directly a constructive method to determine a basis of the associated order $\mathfrak{A}_H$ and also allows us to study the $\mathfrak{A}_H$-freeness of $\mathcal{O}_L$.

\subsection{Freeness over the associated order}\label{sectfreenessgeneral}

Let $\beta=\sum_{j=1}^n\beta_j\gamma_j\in\mathcal{O}_L$ be a potential $\mathfrak{A}_H$-generator of $\mathcal{O}_L$. In particular, $\beta$ must be a generator of $L$ as $H$-module (there is such a $\beta$ because $L$ is $H$-free of rank one, see for instance \cite[(2.16)]{childs}). In \cite[Proposition 4.2]{gilrio}, we proved that $\beta$ is a free generator if and only if the matrix $\sum_{j=1}^n\beta_jM_j(H,L)D^{-1}$ is unimodular, that is, its determinant is $1$ or $-1$. Since $\mathbb{Z}$ is a PID, it is not difficult to see that the latter is a generator of the generalized module index $[\mathcal{O}_L:\mathfrak{A}_H\cdot\beta]$, which is an ideal of $\mathbb{Z}$ (see \cite[Section II.4]{frohlichtaylor} or \cite[Section 4]{johnston}). Then, the former equivalence can be read as the fact that $\beta$ is a free generator if and only if that index is the trivial ideal.

The criterion above is enough in order to solve the problem of the freeness, but it requires a basis of $\mathfrak{A}_H$. Actually, we can provide an answer to the question of the freeness without necessity of determining explicitly such a basis (but we still need to carry out the reduction of $M(H,L)$).  
Let $\mathfrak{H}=\langle w_1,\dots, w_n\rangle_{\mathbb{Z}}$ be the $\mathbb{Z}$-module generated by the basis $W$, which is a full $\mathbb{Z}$-lattice in $H$. Note that $\mathfrak{H}$ is not necessarily contained in $\mathfrak{A}_H$.
In any case, we can consider the generalized module indexes $[\mathfrak{A}_H:\mathfrak{H}]$ and $[\mathfrak{A}_H\cdot\beta:\mathfrak{H}\cdot\beta]$. Now, we have: $$[\mathcal{O}_L:\mathfrak{H}\cdot\beta]=[\mathcal{O}_L:\mathfrak{A}_H\cdot\beta]\,[\mathfrak{A}_H\cdot\beta:\mathfrak{H}\cdot\beta],$$ and the last factor is equal to $[\mathfrak{A}_H:\mathfrak{H}]$ (this follows from the $\mathbb{Q}$-linear map $h\in H\mapsto h\cdot\beta\in H\cdot\beta$ being invertible). Recall that $D$ is the change basis matrix from a basis of $\mathfrak{A}_H$ to $W$, which is a basis of $\mathfrak{H}$. Then, \begin{equation}\label{formindex}
    [\mathfrak{A}_H:\mathfrak{H}]=\langle\mathrm{det}(D)\rangle.
\end{equation} In particular, $\mathrm{det}(D)$ does not depend on the reduced matrix $D$ up to sign (this fact may be checked independently and follows from the Hermite normal form being an invariant of the reduced matrices). We call the positive one the \emph{index of $H$ with respect to $W$}, denoted by $I_W(H,L)$. Carrying this to \eqref{formindex}, we obtain: $$[\mathcal{O}_L:\mathfrak{H}\cdot\beta]=[\mathcal{O}_L:\mathfrak{A}_H\cdot\beta]\langle I_W(H,L)\rangle.$$ Let $D_{\beta}(H,L)$ be any generator of $[\mathcal{O}_L:\mathfrak{H}\cdot\beta]$. Then:

\begin{pro} $\beta$ is a free generator of $\mathcal{O}_L$ as $\mathfrak{A}_H$-module if and only if $$|D_{\beta}(H,L)|=I_W(H,L).$$ 
\end{pro}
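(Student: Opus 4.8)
The plan is to derive the proposition directly from the ideal identity displayed immediately before its statement, after re-expressing freeness as a condition on a module index. First I would recall the criterion recorded in the paragraph following \cite[Proposition 4.2]{gilrio}: a potential generator $\beta$ is a free generator of $\mathcal{O}_L$ over $\mathfrak{A}_H$ if and only if $[\mathcal{O}_L:\mathfrak{A}_H\cdot\beta]_{\mathcal{O}_K}$ is the trivial ideal $\langle 1\rangle$. For completeness I would spell out why: since $\beta$ is a potential generator it generates $L$ as $H$-module, and as $L$ is $H$-free of rank one the map $h\mapsto h\cdot\beta$ is a $K$-linear isomorphism $H\cong L$; restricting it to $\mathfrak{A}_H$ shows that $h\mapsto h\cdot\beta$ is always injective on $\mathfrak{A}_H$ with image $\mathfrak{A}_H\cdot\beta\subseteq\mathcal{O}_L$, so $\beta$ is a free generator exactly when $\mathfrak{A}_H\cdot\beta=\mathcal{O}_L$, i.e. exactly when the index $[\mathcal{O}_L:\mathfrak{A}_H\cdot\beta]_{\mathcal{O}_K}$ equals $\langle 1\rangle$.

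Next I would invoke the identity established above, namely $$[\mathcal{O}_L:\mathfrak{H}\cdot\beta]_{\mathcal{O}_K}=[\mathcal{O}_L:\mathfrak{A}_H\cdot\beta]_{\mathcal{O}_K}\,\langle I_W(H,L)\rangle,$$ which combines \eqref{formindex} with the invertibility of $h\mapsto h\cdot\beta$. Since $D_{\beta}(H,L)$ is by definition a generator of the left-hand side, this becomes $\langle D_{\beta}(H,L)\rangle=[\mathcal{O}_L:\mathfrak{A}_H\cdot\beta]_{\mathcal{O}_K}\,\langle I_W(H,L)\rangle$ as ideals of the PID $\mathcal{O}_K$. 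If $\beta$ is a free generator, the previous paragraph gives $[\mathcal{O}_L:\mathfrak{A}_H\cdot\beta]_{\mathcal{O}_K}=\langle 1\rangle$, hence $\langle D_{\beta}(H,L)\rangle=\langle I_W(H,L)\rangle$, i.e. $D_{\beta}(H,L)$ and $I_W(H,L)$ coincide up to a unit of $\mathcal{O}_K$. Conversely, if $\langle D_{\beta}(H,L)\rangle=\langle I_W(H,L)\rangle$, then $[\mathcal{O}_L:\mathfrak{A}_H\cdot\beta]_{\mathcal{O}_K}\,\langle I_W(H,L)\rangle=\langle I_W(H,L)\rangle$; because $I_W(H,L)=\det(D)$ for a reduced matrix $D$, and reduced matrices are invertible (as $M(H,L)$ has rank $n$ by injectivity of $\rho_H$), we have $I_W(H,L)\neq 0$, so I may cancel the nonzero principal ideal $\langle I_W(H,L)\rangle$ in the domain $\mathcal{O}_K$ to obtain $[\mathcal{O}_L:\mathfrak{A}_H\cdot\beta]_{\mathcal{O}_K}=\langle 1\rangle$, whence $\beta$ is a free generator.

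I do not anticipate a genuine obstacle: the substantive content — the factorization of the module indices and the identification $\det(D)=I_W(H,L)$ (well defined up to units, since all ideals involved are principal in $\mathcal{O}_K$) — is already in place before the statement. The only step warranting a line of care is the cancellation in the converse direction, which needs $I_W(H,L)\neq 0$; this is exactly the invertibility of a reduced matrix, a fact noted earlier and ultimately reducing to the injectivity of $\rho_H$.
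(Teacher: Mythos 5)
Your proposal is correct and follows essentially the same route as the paper, which states this proposition as a direct consequence of the index factorization $[\mathcal{O}_L:\mathfrak{H}\cdot\beta]_{\mathcal{O}_K}=[\mathcal{O}_L:\mathfrak{A}_H\cdot\beta]_{\mathcal{O}_K}\langle I_W(H,L)\rangle$ and the criterion that $\beta$ is a free generator exactly when $[\mathcal{O}_L:\mathfrak{A}_H\cdot\beta]_{\mathcal{O}_K}$ is trivial. You merely make explicit the two details the paper leaves implicit (why triviality of the index characterizes freeness, and why $I_W(H,L)\neq 0$ permits the cancellation in the converse), both of which are justified correctly.
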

 
Since $B$ is chosen to be an integral basis, the number $I_W(H,L)$ is (up to sign) the determinant of any reduced matrix of $M(H_W,L_B)$. As for $D_{\beta}(H,L)$, it is the determinant of the matrix $$M_{\beta}(H,L)=\sum_{j=1}^n\beta_jM_j(H,L),$$ where $\beta=\sum_{j=1}^n\beta_j\gamma_j$. Indeed, given the expression \eqref{blocksmatrix} of the blocks $M_j(H,L)$, we see that $$M_{\beta}(H,L)=\begin{pmatrix}
|& | &\dots  &|  \\
(w_1\cdot\beta)_B&(w_2\cdot\beta)_B&\dots &(w_n\cdot\beta)_B \\
|& |&\dots & |\\
\end{pmatrix}.$$

\begin{rmk}\normalfont If for some $\beta\in\mathcal{O}_L$ we have $D_{\beta}(H,L)=0$, it means that $\beta$ is not even an $H$-generator of $L$.
\end{rmk}

To sum up, the practical procedure we use to study the $\mathfrak{A}_H$-freeness of $\mathcal{O}_L$ is as follows:

\begin{itemize}
    \item[1.] We calculate the Gram matrix $G(H,L)$, where in $H$ we fix any basis $W$ and in $L$ we fix an integral basis $B$.
    \item[2.] We build the matrix $M(H,L)$ from $G(H,L)$, and for every $\beta\in\mathcal{O}_L$, the matrix $M_{\beta}(H,L)$.
    \item[3.] We reduce the matrix $M(H,L)$ by means of a unimodular matrix to determine $I_W(H,L)$.
    \item[4.] For every $\beta\in\mathcal{O}_L$, we compute the determinant $D_{\beta}(H,L)$ of $M_{\beta}(H,L)$ in terms of the coordinates $(\beta_i)_{i=1}^n$ of $\beta$ with respect to $B$.
    \item[5.] We find conditions on the elements $\beta_i$ so that $|D_{\beta}(H,L)|=I_W(H,L)$.
\end{itemize}

In practice, we do not make explicit the second step as the specific form of the matrices $M(H,L)$ is not relevant for our purposes.

\section{Hopf-Galois structures on a quartic Galois extension}\label{secthopfstr}

The description of the Hopf-Galois structures of a quartic Galois extension $L/K$ was carried out by Byott in \cite[Theorem 2.5]{byottp2}, in the more general context of Galois extensions of degree $p^2$. We follow his approach for $p=2$. Namely, let $T=\langle\tau\rangle$ be an order $2$ subgroup of $G$ and let $\sigma\in G-T$ such that $\sigma^2=1_G$ if $G\cong C_2\times C_2$ and $\sigma^2=\tau$ otherwise. Then, we have a presentation of $G$ as follows: \begin{equation}\label{presentcyclic4}
    G=\langle\sigma,\tau\,|\,\sigma^2=\gamma,\tau^2=1_G,\sigma\tau=\tau\sigma\rangle,
\end{equation} where $\gamma=1_G$ if $G\cong C_2\times C_2$ and $\gamma=\tau$ otherwise. 

By Greither-Pareigis theorem, Hopf-Galois structures of $L/K$ are in one-to-one correspondence with regular $G$-stable subgroups of $\mathrm{Perm}(G)$. The one corresponding to the classical Galois structure is $\lambda(G)$ (this does not hold if $G$ is not abelian, see \cite[(6.10)]{childs} for the general choice). As for non-classical Hopf-Galois structures, we have:

\begin{teo}\label{teohopfstrquartic} The regular subgroups of $\mathrm{Perm}(G)$ normalized by $\lambda(G)$ other than $\lambda(G)$ are those of the form $$N_T=\langle\mu,\eta_T\rangle,$$ where $T=\langle\tau\rangle$ runs through the order $2$ subgroups of $G$ and, fixing a presentation of $G$ as in \eqref{presentcyclic4},
\begin{align*}
    \mu(\sigma^k\tau^l)&=\sigma^k\tau^{l-1}, \\
    \eta_T(\sigma^k\tau^l)&=\sigma^{k-1}\tau^{l+k-1}.
\end{align*} The action of $G$ on the previous permutations is given by $$g(\mu)=\mu\,\hbox{ for all }g\in G,\quad\sigma(\eta_T)=\mu\eta_T,\quad\tau(\eta_T)=\eta_T.$$
\end{teo}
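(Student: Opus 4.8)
The plan is to follow Byott's classification strategy for Galois extensions of degree $p^2$ specialized to $p = 2$, so the heart of the argument is a direct verification rather than an invocation of a general theorem. First I would recall that Hopf-Galois structures on $L/K$ correspond bijectively (by the Greither–Pareigis theorem) to regular subgroups $N \leq \mathrm{Perm}(G)$ normalized by the image $\lambda(G)$ of the left regular representation; so the statement to be proven is purely group-theoretic, namely an enumeration of such $N$. I would begin by fixing a presentation of $G$ as in \eqref{presentcyclic4} and writing every element uniquely as $\sigma^k\tau^l$ with $k,l \in \{0,1\}$ (interpreting the exponents mod the appropriate orders), so that $\mathrm{Perm}(G)$ may be described concretely by its action on these four symbols.

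Next I would check the three assertions about $N_{T,d} = \langle \mu, \eta_d\rangle$ in turn. \emph{Regularity}: one verifies that $\mu$ and $\eta_d$ are indeed permutations of $G$, that $\mu^2 = 1$, that $\eta_d$ has order $2$ or $4$ matching the structure of $G$, and that $\mu, \eta_d$ commute (or satisfy the right relation), so that $N_{T,d}$ is a group of order $4$; then one checks transitivity on $G$, which since $|N_{T,d}| = |G|$ forces regularity. This is a routine but slightly delicate computation with the exponent arithmetic, especially tracking the term $(k-1)d$ in $\eta_d$. \emph{Normalization}: one computes $\lambda(g) \nu \lambda(g)^{-1}$ for $\nu \in \{\mu, \eta_d\}$ and $g \in \{\sigma, \tau\}$ (it suffices to check generators of $G$), and shows the result lies in $N_{T,d}$; in fact this computation simultaneously yields the displayed action formulas $g(\mu) = \mu$, $\sigma(\eta_d) = \mu^d \eta_d$, $\tau(\eta_d) = \eta_d$, since the action of $G$ on a normalizing $N$ is precisely conjugation by $\lambda(G)$. \emph{Exhaustiveness}: this is the part I expect to be the main obstacle. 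One must show every regular subgroup $N$ normalized by $\lambda(G)$ is of the form $N_{T,d}$. The idea is that $N$ is abelian of order $4$ (either $C_4$ or $C_2 \times C_2$), hence isomorphic to $G$ or to the other group of order $4$; one then analyzes the conjugation action of $G$ on $N$, notes that it must fix a transposition-type element giving rise to the distinguished subgroup $T$, and counts the remaining freedom, which turns out to be the single binary parameter $d$. Bounding the count — showing there are exactly $2 \cdot (\text{number of order-2 subgroups of }G)$ such subgroups and no more — requires either a Sylow/centralizer argument inside $\mathrm{Perm}(G) \cong S_4$ or an appeal to the enumeration already carried out in \cite[Theorem 2.5]{byottp2}.

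Since the excerpt explicitly says ``We follow his approach for $p = 2$,'' I would in practice cite \cite[Theorem 2.5]{byottp2} for the exhaustiveness and the shape of the $N_{T,d}$, and devote the written proof to the explicit verification of the permutation formulas for $\mu$ and $\eta_d$ and the computation of the $G$-action, since those concrete formulas (in the specific presentation \eqref{presentcyclic4}) are what the rest of the paper actually uses. The only genuine subtlety in that verification is keeping the two cases $G \cong C_4$ and $G \cong C_2 \times C_2$ uniform via the symbol $\gamma$, and confirming that when $d = 0$ the subgroup $N_{T,0}$ recovers $\lambda(G)$ (the classical structure) precisely when $T$ is chosen compatibly — a consistency check worth stating explicitly.
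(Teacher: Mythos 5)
Your proposal is correct and matches the paper's approach: the paper gives no proof of this theorem at all, simply deferring to Byott's classification of regular subgroups normalized by $\lambda(G)$ for degree-$p^2$ extensions (\cite[Theorem 2.5]{byottp2}) specialized to $p=2$, with the explicit permutation formulas and the $G$-action then taken as the concrete content to be used later. Your plan to cite Byott for exhaustiveness and to verify regularity, normalization and the conjugation action directly is exactly the intended reading, and your closing consistency check that $N_{T,0}=\lambda(G)$ gives the classical structure is precisely the remark the paper makes immediately after the statement.
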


\begin{rmk}\normalfont Actually, the classical Galois structure can be included in this classification by setting a parameter in the definition of the second generator. Concretely, it corresponds to the group $N=\langle\mu,\eta_0\rangle$ with $\eta_0(\sigma^k\tau^l)=\sigma^{k-1}\tau^l$ (regardless of the choice of $T$), and we reflect all cases in $\eta_{T,d}(\sigma^k\tau^l)=\sigma^{k-1}\tau^{l+(k-1)d}$ with $d\in\{0,1\}$.
\end{rmk}

It follows immediately from the theorem that $\mu=\lambda(\tau)=(1_G,\tau)(\sigma,\sigma\tau)$. As for the other generator, we have: 
\begin{align*}
    \eta_T&=\begin{cases}(1_G,\sigma)(\tau,\sigma\tau) & \hbox{if }G\cong C_4, \\
    (1_G,\sigma\tau,\tau,\sigma) & \hbox{if }G\cong C_2\times C_2.
    \end{cases} \\
    \eta_T^2&=\begin{cases}\mathrm{Id} & \hbox{if }G\cong C_4, \\
    \mu & \hbox{if }G\cong C_2\times C_2.
    \end{cases}
\end{align*} That is, if $G\cong C_4$, $N_T\cong C_2\times C_2$, and otherwise, $N_T\cong C_4$. 

If $G\cong C_4$, $L/K$ has a unique non-classical Hopf-Galois structure given by $N_{T,1}$. Otherwise, if $G\cong C_2\times C_2$, there are two other non-classical Hopf-Galois structures, which arise from replacing $T_1\coloneqq T$ by $T_2\coloneqq\langle\sigma\rangle$ and $\sigma$ by $\tau$ for one of them, and $T_1$ by $T_3\coloneqq\langle\sigma\tau\rangle$ (and keeping $\sigma$) for the other one. Let us determine the corresponding permutation subgroups $N_{T_i}$ for $i\in\{2,3\}$. For $i=2$, $N_{T_2}=\langle\mu_2,\eta_{T_2}\rangle$. Following the definition, $$\mu_2(\sigma^k\tau^l)=\sigma^{k-1}\tau^l,$$ whence $\mu_2=\lambda(\sigma)$. On the other hand, $$\eta_{T_2}(\sigma^k\tau^l)=\sigma^{k+l-1}\tau^{l-1},$$ so $\eta_{T_2}=(1_G,\sigma\tau,\sigma,\tau)$. Finally, for $i=3$, we have $N_{T_3}=\langle\mu_3,\eta_{T_3}\rangle$. We compute the generators: 
\begin{equation*}
    \begin{split}
        \mu_3(\sigma^k\tau^l)&=\mu_3(\sigma^{k-l}(\sigma\tau)^l)=\sigma^{k-l}(\sigma\tau)^{l-1}=\sigma^{k-1}\tau^{l-1},\\\eta_{T_3}(\sigma^k\tau^l)&=\eta_{T_3}(\sigma^{k-l}(\sigma\tau)^l)=\sigma^{k-l-1}(\sigma\tau)^{l+k-l-1}=\sigma^{-l+2(k-1)}\tau^{k-1}=\sigma^l\tau^{k-1}.
    \end{split}
\end{equation*} We deduce that $\mu_3=\lambda(\sigma\tau)=(1,\sigma\tau)(\sigma,\tau)$ and $\eta_{T_3}=(1_G,\tau,\sigma\tau,\sigma)$. \\

Once the permutation subgroups are computed, we determine the corresponding Hopf algebras of those non-classical Hopf-Galois structures. We follow \cite[Lemma 2.10]{byottp2} to find a basis of the corresponding $K$-Hopf algebra. Let $H_i$ be the $K$-Hopf algebra of the Hopf-Galois structure corresponding to $T_i$, where $i=1$ if $G$ is cyclic and $i\in\{1,2,3\}$ otherwise. Then the reference above gives $$H_i=K[\mu_i,a_{z_i}\eta_{T_i}],$$ where $z_i\in E_i\coloneqq L^{T_i}$ is such that $\sigma(z_1)=-z_1$ (resp. $\tau(z_2)=-z_2$, resp. $\sigma(z_3)=-z_3$) and $$a_{z_i}=\frac{\mathrm{Id}+\mu_i}{2}+z_i\frac{\mathrm{Id}-\mu_i}{2}.$$ Then, the second generating element is $$a_{z_i}\eta_{T_i}=\frac{\eta_{T_i}+\mu_i\eta_{T_i}+z_i(\eta_{T_i}-\mu_i\eta_{T_i})}{2}.$$ Replacing $z_i$ with $-z_i$, we get an alternative generating element, and the sum and difference of that one with the one above produce two elements of $H_i$ $$\eta_{T_i}+\mu_i\eta_{T_i},\quad z_i(\eta_{T_i}-\mu_i\eta_{T_i}),$$ such that, together with $\mathrm{Id}$ and $\mu_i$, form a $K$-basis of $H_i$. Thus, we have obtained:

\begin{pro}\label{basishopfcyclic} A cyclic quartic extension $L/K$ has two Hopf-Galois structures: the classical Galois structure, of type $C_4$ and algebra $H_c$, and a non-classical Hopf-Galois structure of type $C_2\times C_2$ with algebra $H$ having $K$-basis $$\left\lbrace\mathrm{Id},\mu,\eta_T+\mu\eta_T,z(\eta_T-\mu\eta_T)\right\rbrace,$$ where $z$ is the square root in $L$ of a non-square element in $K$.
\end{pro}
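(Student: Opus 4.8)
The plan is to carry out the computation already sketched in the paragraph preceding the statement, but to present it as a clean verification rather than a discovery. First I would recall that by the Greither--Pareigis correspondence, the Hopf algebra of the Hopf-Galois structure attached to the regular subgroup $N = N_{T,1}$ is $H = (L[N])^G$, the fixed ring under the simultaneous action of $G$ on $L$ (by automorphisms) and on $N$ (by the conjugation action $\lambda(g)\eta\lambda(g)^{-1}$ described in Theorem~\ref{teohopfstrquartic}). Since $G\cong C_4$, the relevant action data are $g(\mu)=\mu$ for all $g\in G$, $\tau(\eta_{T,1})=\eta_{T,1}$, and $\sigma(\eta_{T,1})=\mu\eta_{T,1}$, hence also $\sigma(\mu\eta_{T,1})=\mu\cdot\mu\eta_{T,1}=\mu^2\eta_{T,1}=\eta_{T,1}$ because $\mu^2 = \mathrm{Id}$ (recall $\mu = \lambda(\tau)$ and $\tau^2 = 1_G$). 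So $\sigma$ swaps $\eta_{T,1}$ and $\mu\eta_{T,1}$ while fixing $\mathrm{Id}$ and $\mu$.

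Next I would take a general element $x = x_1\mathrm{Id} + x_2\mu + x_3\eta_{T,1} + x_4\mu\eta_{T,1}$ with $x_i\in L$, impose $\sigma(x) = x$, and read off coordinates against the $L$-basis $\{\mathrm{Id},\mu,\eta_{T,1},\mu\eta_{T,1}\}$ of $L[N]$. This yields $\sigma(x_1)=x_1$, $\sigma(x_2)=x_2$, $\sigma(x_3)=x_4$, $\sigma(x_4)=x_3$. Since $\langle\sigma\rangle = G$ acts on $L$ with fixed field $K$, the first two force $x_1,x_2\in K$. Applying $\sigma$ twice to the third gives $\sigma^2(x_3)=x_3$, so $x_3$ lies in $E := L^{\langle\sigma^2\rangle}$, the unique quadratic subextension, and $x_4 = \sigma(x_3)$ is determined by $x_3$. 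Writing $E = K(z)$ with $z\notin K$ and $z^2\in K$, we have $\sigma|_E$ = the nontrivial $K$-automorphism of $E$, so $\sigma(z) = -z$; writing $x_3 = x_3^{(1)} + x_3^{(2)}z$ with $x_3^{(j)}\in K$ gives $x_4 = x_3^{(1)} - x_3^{(2)}z$. Substituting back,
\[
x = x_1\mathrm{Id} + x_2\mu + x_3^{(1)}\bigl(\eta_{T,1}+\mu\eta_{T,1}\bigr) + x_3^{(2)}\,z\bigl(\eta_{T,1}-\mu\eta_{T,1}\bigr),
\]
which exhibits the four displayed elements as a spanning set of $H$ over $K$.

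Finally I would argue linear independence: these four elements are visibly $L$-linearly independent inside $L[N]$ (their coordinate matrix against $\{\mathrm{Id},\mu,\eta_{T,1},\mu\eta_{T,1}\}$ is block-triangular with an invertible $2\times 2$ block $\begin{pmatrix}1&z\\ 1&-z\end{pmatrix}$ of determinant $-2z\neq 0$), hence a fortiori $K$-linearly independent, so they form a $K$-basis; this also matches $\dim_K H = |N| = 4$. For the "type" assertions: the Hopf algebra type of a Hopf-Galois structure is the isomorphism class of $N$, and the computation recorded in the excerpt after Theorem~\ref{teohopfstrquartic} shows $\eta_{T,1}^2 = \mathrm{Id}$ when $G\cong C_4$, so $N = N_{T,1} = \langle\mu,\eta_{T,1}\rangle \cong C_2\times C_2$, while $H_c = K[G]$ has type $C_4$. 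I expect no genuine obstacle here — the content is entirely in the short fixed-ring computation above — so the only thing to be careful about is the bookkeeping: correctly deriving $\sigma(\mu\eta_{T,1}) = \eta_{T,1}$ from the stated action (this uses $\mu^2=\mathrm{Id}$, which holds because we are in the cyclic case where $\eta_{T,0}^2 = \mu$ and $\eta_{T,1}^2=\mathrm{Id}$, not the biquadratic case), and checking that $\sigma$ acts on $E$ as the nontrivial automorphism so that $z\mapsto -z$.
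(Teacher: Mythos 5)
Your proposal is correct and follows essentially the same route as the paper: the paper's proof is precisely the fixed-ring computation in the paragraph preceding the statement (take a general $x\in L[N]^G$, impose $\sigma(x)=x$, deduce $x_1,x_2\in K$ and $x_3\in E=L^{\langle\sigma^2\rangle}$ with $x_4=\sigma(x_3)$, and expand over $K(z)$). The extra bookkeeping you supply — verifying $\sigma(\mu\eta_{T,1})=\eta_{T,1}$ via $\mu^2=\mathrm{Id}$, the $L$-linear independence of the four elements, and identifying the type from $N_{T,1}\cong C_2\times C_2$ — is all correct and consistent with what the paper records around Theorem~\ref{teohopfstrquartic}.
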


\begin{pro}\label{basishopfbiquad} Let $L/K$ be a quartic elementary abelian extension with Galois group $G$ and let $E_1/K$, $E_2/K$ and $E_3/K$ be its quadratic subextensions. The Hopf-Galois structures on $L/K$ are the classical on, of type $C_2\times C_2$ and algebra $H_c$, and three non-classical Hopf-Galois structures of type $C_4$ with algebras $\{H_i\}_{i=1}^3$ such that for every $1\leq i\leq 3$, a $K$-basis of $H_i$ is $$\left\lbrace\mathrm{Id},\mu_i,\eta_{T_i}+\mu_i\eta_{T_i},z_i(\eta_{T_i}-\mu_i\eta_{T_i})\right\rbrace,$$ where $z_i\in E_i-K$ and $z_i^2\in K$.
\end{pro}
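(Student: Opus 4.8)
The plan is to read the proposition off from Theorem~\ref{teohopfstrquartic} together with the Greither--Pareigis correspondence and the explicit descent computations already carried out in this section. No new idea is needed beyond organizing those ingredients.

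First I would apply Theorem~\ref{teohopfstrquartic} with $G\cong C_2\times C_2$: the regular subgroups of $\mathrm{Perm}(G)$ normalized by $\lambda(G)$ are the $N_{T,0}$ and the $N_{T,1}$, with $T$ ranging over the three order-$2$ subgroups $T_1,T_2,T_3$ of $G$. Using $\mu=\lambda(\tau)$ and $\eta_{T,0}=\lambda(\sigma^{-1})$ (and the analogous identities under the change of presentation when $T_1$ is replaced by $T_2$ or $T_3$), one checks that $N_{T_i,0}=\lambda(G)$ for each $i$, so these three subgroups coincide and yield a single Hopf--Galois structure, namely the classical one $H_c=K[G]$, which has type $C_2\times C_2$. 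On the other hand the permutations $\eta_{T_1,1},\eta_{T_2,1},\eta_{T_3,1}$ computed above are the three distinct $4$-cycles $(1_G,\sigma\tau,\tau,\sigma)$, $(1_G,\sigma\tau,\sigma,\tau)$, $(1_G,\tau,\sigma\tau,\sigma)$, so $N_1,N_2,N_3$ are pairwise distinct, and each is isomorphic to $C_4$ by $\eta_{T_i,1}^2=\mu_i$. Hence there are exactly three non-classical structures, all of type $C_4$.

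Next, for each $i$ I would describe the Hopf algebra $H_i=L[N_i]^G$ via Greither--Pareigis, where $G$ acts on $N_i$ through the action of Theorem~\ref{teohopfstrquartic} transported along the relevant change of presentation of $G$ from~\eqref{presentcyclic4}. Writing a general element $x=x_1\mathrm{Id}+x_2\mu_i+x_3\eta_{T_i,1}+x_4\mu_i\eta_{T_i,1}$ with $x_j\in L$ and imposing $g(x)=x$ for $g$ running over the generators of $G$, the relations $g(\mu_i)=\mu_i$ force $x_1,x_2\in K$, while the relation governing the remaining two basis permutations forces $x_3,x_4$ to be Galois-conjugate over the quadratic subextension $E_i$ fixed by the stabilizer of $\eta_{T_i,1}$ in $G$: these turn out to be $E_1=L^{\langle\tau\rangle}$, $E_2=L^{\langle\sigma\rangle}$, $E_3=L^{\langle\sigma\tau\rangle}$, precisely the three quadratic subextensions of $L/K$. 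Choosing $z_i\in E_i-K$ with $z_i^2\in K$ and $E_i=K(z_i)$, Galois descent for $E_i/K$ gives $x_3=x_3^{(1)}+x_3^{(2)}z_i$, $x_4=x_3^{(1)}-x_3^{(2)}z_i$ with $x_3^{(1)},x_3^{(2)}\in K$, whence
\[
x=x_1\mathrm{Id}+x_2\mu_i+x_3^{(1)}(\eta_{T_i,1}+\mu_i\eta_{T_i,1})+x_3^{(2)}z_i(\eta_{T_i,1}-\mu_i\eta_{T_i,1}).
\]
Thus the four displayed elements span $H_i$ over $K$; they are $K$-linearly independent because the $2\times2$ transition matrix expressing $\eta_{T_i,1}+\mu_i\eta_{T_i,1}$ and $z_i(\eta_{T_i,1}-\mu_i\eta_{T_i,1})$ in terms of $\eta_{T_i,1},\mu_i\eta_{T_i,1}$ has nonzero determinant $-2z_i$, and since $\dim_K H_i=|N_i|=4$ they form a $K$-basis.

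The only delicate point is the bookkeeping: replacing $T_1$ by $T_2$ or $T_3$ is a genuine change of the presentation~\eqref{presentcyclic4}, so the action formulas $\sigma(\eta_d)=\mu^d\eta_d$, $\tau(\eta_d)=\eta_d$ must first be re-expressed in terms of the original $\sigma,\tau$ before identifying the stabilizer of $\eta_{T_i,1}$ and hence the correct $E_i$. This is exactly the case-by-case verification carried out above; once it is in place, the rest is a routine unwinding of the fixed-ring description.
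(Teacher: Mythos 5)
Your proposal is correct and follows essentially the same route as the paper: the paper's justification of Proposition~\ref{basishopfbiquad} is precisely the case-by-case descent computation in Section~\ref{subsectelemab}, which identifies the stabilizer of each $\eta_{T_i,1}$, places $x_3,x_4$ in the corresponding quadratic subfield $E_i$, and rewrites the invariant element in the displayed basis. Your added check of linear independence via the determinant $-2z_i$ and the explicit verification that the three $N_{T_i,0}$ all equal $\lambda(G)$ are details the paper leaves implicit, but they do not change the argument.
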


We will sometimes say that $H_i$ is the Hopf-Galois structure given by $z_i$ or corresponding to $z_i$.

\section{Cyclic quartic extensions of $\mathbb{Q}$}\label{sectcyclicquarticQ}

Let $L/\mathbb{Q}$ be a Galois quartic extension.  By \cite[Theorem 1]{hardyetal}, $L/\mathbb{Q}$ is a cyclic quartic extension if and only if $L=\mathbb{Q}\left(\sqrt{a(d+b\sqrt{d})}\,\right),$ where: \begin{itemize}
    \item $a\in\mathbb{Z}$ is odd square-free and $b\in\mathbb{Z}_{>0}$;
    \item $d=b^2+c^2$ for some $c\in\mathbb{Z}_{>0}$ and $d$ is square-free;
    \item $\gcd(a,d)=1$.
\end{itemize}
Note that the second condition is equivalent to $d$ being the product of different primes, none of them congruent to $3\mod 4$.

\subsection{Integral bases and Hopf actions}

As already mentioned, we know by Leopoldt's theorem that $\mathcal{O}_L$ is free over its associated order in the classical Galois structure $H_c$. 
In order to study the analogous question for the non-classical Hopf-Galois structure $H$, we will follow the procedure described just at the end of Section \ref{sectfreenessgeneral}. In particular, we need the matrix of the action $M(H,L)$ where in $L$ we fix an integral basis. Therefore, we need to know an integral basis of $L$ and the action of $H$ on this basis. As for the first of these, we have the following result (see \cite{hudsonwilliams}):

\begin{teo}\label{intbasiscyclicquarticq} Let $L/\mathbb{Q}$ be a cyclic quartic extension and let $a,b,c,d\in\mathbb{Z}$ as above. Define $z=\sqrt{a(d+b\sqrt{d})}$ and $w=\sqrt{a(d-b\sqrt{d})}$. Then, an integral basis of $K$ is given as follows:
\begin{itemize}
    \item[1.] If $d\equiv0\pmod 2$, $B=\{1,\sqrt{d},z,w\}.$
    \item[2.] If $d\equiv1\pmod 2$ and $b\equiv1\pmod 2$, $B=\left\lbrace1,\frac{1+\sqrt{d}}{2},z,w\right\rbrace.$
    \item[3.] If $d\equiv1\pmod 2$, $b\equiv0\pmod 2$ and $a+b\equiv3\pmod 4$, $B=\left\lbrace1,\frac{1+\sqrt{d}}{2},\frac{z+w}{2},\frac{z-w}{2}\right\rbrace.$
    \item[4.] If $d\equiv1\pmod 2$, $b\pmod 2$, $a+b\equiv1\pmod 4$ and $a\equiv c\pmod 4$, $$B=\left\lbrace1,\frac{1+\sqrt{d}}{2},\frac{1+\sqrt{d}+z+w}{4},\frac{1-\sqrt{d}+z-w}{4}\right\rbrace.$$
    \item[5.] If $d\equiv1\pmod 2$, $b\equiv0 \pmod 2$, $a+b\equiv1\pmod 4$ and $a\equiv-c\pmod 4$, $$B=\left\lbrace1,\frac{1+\sqrt{d}}{2},\frac{1+\sqrt{d}+z-w}{4},\frac{1-\sqrt{d}+z+w}{4}\right\rbrace.$$
\end{itemize}
\end{teo}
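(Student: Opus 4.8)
The plan is to identify $\mathcal{O}_L$ by combining a computation of the discriminant $d_L$ with the verification that each of the five candidate sets spans a sublattice of exactly that discriminant. First I would record the algebraic structure. Let $E=\mathbb{Q}(\sqrt{d})$ be the unique quadratic subextension of $L/\mathbb{Q}$; since $d$ is a product of primes none congruent to $3\bmod 4$, we have $d\equiv 1$ or $2\bmod 4$, so that $\mathcal{O}_E=\mathbb{Z}\left[\tfrac{1+\sqrt{d}}{2}\right]$ or $\mathbb{Z}[\sqrt{d}]$ respectively (this is why $\tfrac{1+\sqrt{d}}{2}$ occurs exactly in the cases with $d$ odd). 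With $z,w$ as in the statement, the relation $d-b^{2}=c^{2}$ gives $z^{2}w^{2}=a^{2}(d^{2}-b^{2}d)=a^{2}c^{2}d$, so after fixing signs $zw=ac\sqrt{d}$ and in particular $\sqrt{d}=zw/(ac)\in L$; moreover $z^{2}+w^{2}=2ad$, $z^{2}-w^{2}=2ab\sqrt{d}$, $(z\pm w)^{2}=2a(d\pm c\sqrt{d})$, and $z$ is a root of the irreducible quartic $m(T)=T^{4}-2adT^{2}+a^{2}c^{2}d$ (irreducible because $z\notin E$ and $E$ is the only quadratic subfield). These identities express every element occurring in the five bases as an explicit $\mathcal{O}_E$-combination of $1,\sqrt{d},z$ (or of $z\pm w$), which is what makes the integrality tests below concrete.

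Next I would compute the target discriminant $d_L$. On one hand $\operatorname{disc}(m)=2^{8}a^{6}b^{4}c^{2}d^{3}$, which equals $d_L\cdot[\mathcal{O}_L:\mathbb{Z}[z]]^{2}$. On the other, writing $L=E(\sqrt{\alpha})$ with $\alpha=a(d+b\sqrt{d})\in\mathcal{O}_E$, the tower formula $d_L=d_E^{2}\,N_{E/\mathbb{Q}}(\mathfrak{d}_{L/E})$ --- equivalently the conductor--discriminant formula $d_L=d_E\cdot\mathfrak{f}(\chi)^{2}$ for a quartic character $\chi$ cutting out $L$ --- reduces the problem to the relative different $\mathfrak{d}_{L/E}$, which divides $4\alpha\mathcal{O}_E$. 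Away from $2$ this is standard quadratic Kummer theory over $\mathcal{O}_E$: the odd primes dividing $ad$ are tamely ramified, with exponent read off from $\alpha$, and comparing the two expressions for $d_L$ then tells us, prime by prime, how far $\mathbb{Z}[z]$ (or $\mathbb{Z}[\sqrt{d},z,w]$) is from maximal.

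Then comes the case analysis, which is the bulk of the argument. For each of the five cases I would: (i) check that every element of the listed set $B$ is an algebraic integer by producing a monic integer polynomial it satisfies --- for $\tfrac{z\pm w}{2}$ this follows from $\big(\tfrac{z\pm w}{2}\big)^{2}=\tfrac a2(d\pm c\sqrt{d})\in\mathcal{O}_E$ together with the parity of $b$, while for the $4$-denominator elements of cases 4 and 5 one computes the characteristic polynomial over $\mathbb{Q}$ and checks that all its coefficients are integral precisely under the congruences $a+b\equiv 1\bmod 4$ and $a\equiv\pm c\bmod 4$; (ii) compute the discriminant of the $\mathbb{Z}$-lattice spanned by $B$, either as $(\det P)^{2}\operatorname{disc}(\mathbb{Z}[z])$ for the change-of-basis matrix $P$ from $\{1,z,z^{2},z^{3}\}$, or directly as a Gram determinant of traces. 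When this discriminant equals the $d_L$ found above, $B$ is forced to be a $\mathbb{Z}$-basis of $\mathcal{O}_L$. Finally I would verify that the five congruence conditions are mutually exclusive and, under the standing hypotheses on $a,b,c,d$, exhaustive, so that the list is complete.

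The hard part will be the prime $2$. Everything delicate in the statement --- the denominators $2$ and $4$, and the five-way split --- comes from deciding exactly which $2$- and $4$-fractional combinations of $1,\sqrt{d},z,w$ are $2$-integral, and this is genuinely a computation in $\mathcal{O}_L\otimes\mathbb{Z}_{2}$: one expands $\sqrt{d}$ (when $d\equiv 1\bmod 8$ it is already a square in $\mathbb{Z}_{2}$), then $z$ and $w$, modulo increasing powers of $2$, and reads off $\mathcal{O}_L\otimes\mathbb{Z}_{2}$ case by case; it is here that the hypotheses on $b\bmod 2$, on $a+b\bmod 4$, and on $a\equiv\pm c\bmod 4$ are consumed. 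By contrast the odd primes are routine: for $p\nmid 2abcd$ the order $\mathbb{Z}_{p}[z]$ is already maximal since $p\nmid\operatorname{disc}(m)$, and for odd $p\mid abcd$ a direct local Kummer or Newton-polygon computation (tame when $p\mid ad$) settles the local contribution. So the proof ultimately rests on a clean $2$-adic case distinction.
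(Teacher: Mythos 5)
The paper does not actually prove this theorem: it is imported from Hudson and Williams \cite{hudsonwilliams}, so there is no in-paper argument to compare yours against. Your outline --- fix the quadratic subfield $E=\mathbb{Q}(\sqrt{d})$, record the identities $zw=ac\sqrt{d}$, $(z\pm w)^2=2a(d\pm c\sqrt{d})$ and the minimal polynomial $T^4-2adT^2+a^2c^2d$, compute $d_L$ via the conductor--discriminant (or tower) formula, and then match it against the discriminant of each candidate lattice after checking integrality of its elements --- is the standard strategy and is essentially the route of the cited source. The individual facts you state check out: $\operatorname{disc}(m)=2^8a^6b^4c^2d^3$ is correct, $d\equiv 1$ or $2\bmod 4$ (so $\mathcal{O}_E$ has the shape you claim), the integrality of $\tfrac{z\pm w}{2}$ in case 3 does reduce to $ad\equiv ac\bmod 2$, and the five cases are mutually exclusive and exhaustive because $a,c$ are forced odd and $a+b$, being odd, is $1$ or $3$ mod $4$.

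That said, what you have written is a roadmap rather than a proof. The entire content of the theorem lives in the two computations you defer: the explicit value of $d_L$ (equivalently of $\mathfrak{f}(\chi)$ prime by prime, including at $2$), and the $2$-adic determination of exactly which combinations with denominators $2$ and $4$ are integral. In particular, the congruences $a+b\equiv 1$ or $3\bmod 4$ and $a\equiv\pm c\bmod 4$ that separate cases 3, 4 and 5 are never derived; you only assert that they will be ``consumed'' in a $2$-adic expansion that is not carried out, and even the integrality of $\tfrac{1\pm\sqrt{d}+z\pm w}{4}$ is described as a computation to be done rather than done. Until that local analysis at $2$ is executed and the resulting index is compared with the computed $d_L$, the five-way split is unverified, so the proposal establishes the correct framework but not the theorem.
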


In order to find the suitable Gram matrix we compute the matrix $G(H,L_{B_c})$ where $B_c=\{1,\sqrt{d},z,w\},$ and then we carry out the computation $G(H,L_B)=G(H,L_{B_c})P_{B_c}^B$, where $B$ is an integral basis in Theorem \ref{intbasiscyclicquarticq}. From now on, we call $B_c=\{e_1,e_2,e_3,e_4\}$ and $B=\{\gamma_1,\gamma_2,\gamma_3,\gamma_4\}$. 

The irreducible polynomial of $z$ is $f(x)=x^4-2adx^2+a^2c^2d$ and its roots are $\pm z$, $\pm w$.  
We can take $\sigma=(z,w,-z,-w)$ as generator of $G$. Let us choose $\{1_G,\sigma,\sigma^2,\sigma^3\}$ as basis for $H_c$. This gives rise to the Gram matrix $$G(H_c,L_{B_c})=\begin{pmatrix}
e_1 & e_2 & e_3 & e_4 \\
e_1 & -e_2 & e_4 & -e_3 \\
e_1 & e_2 & -e_3 & -e_4 \\
e_1 & -e_2 & -e_4 & e_3
\end{pmatrix}.$$ By Proposition \ref{basishopfcyclic}, $H$ has $\mathbb{Q}$-basis $\left\lbrace\mathrm{Id},\mu,\eta_T+\mu\eta_T,z(\eta_T-\mu\eta_T)\right\rbrace.$ We have to determine the action of this basis on $B_c$. First, $\mu=\lambda(\sigma^2)$, which acts on $L$ as $\lambda(\sigma^2)^{-1}(1_G)=\sigma^2$. Then, its action on $B_c$ is given by the third row of $G(H_c,L_{B_c})$. On the other hand, we have that $\eta_T=(1_G,\sigma)(\sigma^2,\sigma^3)$ and $\mu\eta_T=(1_G,\sigma^3)(\sigma,\sigma^2)$, which act on $L$ as $\sigma$ and $\sigma^3$ respectively. Then, the action of $\eta_T+\mu\eta_T$ on $B_c$ is given by the sum of the second and the fourth rows of $G(H_c,L_{B_c})$. We determine the action of $z(\eta_T-\mu\eta_T)$ on $B_c$ in a similar way. Therefore, the Gram matrix of $H$ where in $L$ we fix the basis $B_c$ is $$G(H,L_{B_c})=\begin{pmatrix}
e_1 & e_2 & e_3 & e_4 \\
e_1 & e_2 & -e_3 & -e_4 \\
2e_1 & -2e_2 & 0 & 0 \\
0 & 0 & 2w\sqrt{d} & -2z\sqrt{d}\end{pmatrix}.$$ 

We have $zw=ac\sqrt{d}$ and
$$\frac{1}{z}=-\frac{1}{a^2c^2d}z^3+\frac{2}{ac^2}z,\quad\sqrt{d}=\frac{1}{ab}z^2-\frac{d}{b},$$
so that
$$z\sqrt{d}=\frac{1}{ab}z^3-\frac{d}{b}z=bz+cw,\quad w\sqrt{d}=-\frac{1}{ac}z^3+\frac{2d}{c}z=cz-bw.$$ 

Hence, the Gram matrix 
becomes $$G(H,L_{B_c})=\begin{pmatrix}
e_1 & e_2 & e_3 & e_4 \\
e_1 & e_2 & -e_3 & -e_4 \\
2e_1 & -2e_2 & 0 & 0 \\
0 & 0 & 2ce_3-2be_4 & -2be_3-2ce_4\end{pmatrix}.$$
Let us observe that it is independent from the twisting element $a$, so that the freeness of $\mathcal{O}_L$ over $\mathfrak{A}_H$ will also be independent of $a$.

\subsection{The reduction step}\label{sectredcyclic}

For each of the cases in Theorem \ref{intbasiscyclicquarticq}, we compute $G(H,L_B)=G(H,L_{B_c})P_{B_c}^B$. The knowledge of the Gram matrix $G(H,L_B)$ at each case allows to construct the matrix of the action $M(H,L_B)$. Then, we may find the Hermite normal form of $M(H,L_B)$ to compute the index $I(H,L)$.

\subsubsection{Case $1$: $d\equiv0\pmod 2$}

Since $\gamma_i=e_i$ for all $i$, the Gram matrix is $$G(H,L_B)=\begin{pmatrix}
\gamma_1 & \gamma_2 & \gamma_3 & \gamma_4 \\
\gamma_1 & \gamma_2 & -\gamma_3 & -\gamma_4 \\
2\gamma_1 & -2\gamma_2 & 0 & 0 \\
0 & 0 & 2c\gamma_3-2b\gamma_4 & -2b\gamma_3-2c\gamma_4\end{pmatrix}$$ and we can reduce the matrix $M(H,L)$ to $$\begin{pmatrix}
1 & 1 & 2 & 0 \\
0 & 2 & 2 & -2c \\
0 & 0 & 4 & 0 \\
0 & 0 & 0 & 2b \\
0 & 0 & 0 & 4c
\end{pmatrix}.$$ 
Using Bézout's identity with the last two rows 
we get a nonzero fourth row with $\gcd(2b,4c)$ in the diagonal entry. Since $d$ is even and square-free, $b$ and $c$ must be odd and coprime. Namely, $\gcd(2b,4c)=2$. Then, we can reduce the third row and obtain the Hermite normal form  $$D(H,L)=\begin{pmatrix}
1 & 1 & 2 & 0 \\
0 & 2 & 2 & 0 \\
0 & 0 & 4 & 0 \\
0 & 0 & 0 & 2
\end{pmatrix}$$ and $I(H,L)=16$.

\subsubsection{Case $2$: $d\equiv1\pmod 2$ and $b\equiv1\pmod 2$}

We obtain $$G(H,L)=\begin{pmatrix}
\gamma_1 &  \gamma_2 & \gamma_3 & \gamma_4 \\
\gamma_1 &  \gamma_2 & -\gamma_3 & -\gamma_4 \\
2\gamma_1 & 2\gamma_1-2\gamma_2 & 0 & 0 \\
0 & 0 & c\gamma_3-b\gamma_4 & -b\gamma_3-c\gamma_4
\end{pmatrix}.$$ In this case, we can reduce the matrix of the action to $$\begin{pmatrix}
1 & 1 & 0 & 0 \\
0 & 2 & 0 & -2c \\
0 & 0 & 2 & 0 \\
0 & 0 & 0 & 2b \\
0 & 0 & 0 & 4c
\end{pmatrix}$$ 
and arguing as in the previous one, we obtain the Hermite normal form  $$D(H,L)=\begin{pmatrix}
1 & 1 & 0 & 0 \\
0 & 2 & 0 & 0 \\
0 & 0 & 2 & 0 \\
0 & 0 & 0 & 2
\end{pmatrix},$$ which gives $I(H,L)=8$. 

\subsubsection{Case $3$: $d\equiv1\pmod 2$, $b\equiv0\pmod 2$ and $a+b\equiv3\pmod 4$}

The Gram matrix is $$G(H,L)=\begin{pmatrix}
\gamma_1 & \gamma_2 & \gamma_3 & \gamma_4 \\
\gamma_1 & \gamma_2 & -\gamma_3 & -\gamma_4 \\
2\gamma_1 & 2\gamma_1-2\gamma_2 & 0 & 0 \\
0 & 0 & 2c\gamma_3+2b\gamma_4 & -2b\gamma_3+2c\gamma_4
\end{pmatrix}.$$ We obtain exactly the same Hermite normal form as in the previous case, so $I(H,L)=8$.

\subsubsection{Case $4$: $d\equiv 1 \pmod 2 $, $b\equiv 0\pmod 2$, $a+b\equiv 1\pmod 4$ and $a\equiv c\pmod 4$}

The Gram matrix is $$G(H,L)=\begin{pmatrix}
\gamma_1 &  \gamma_2 & \gamma_3 & \gamma_4 \\
\gamma_1 &  \gamma_2 & \gamma_2-\gamma_3 & \gamma_1-\gamma_2-\gamma_4 \\
2\gamma_1 & 2\gamma_1-2\gamma_2 & \gamma_1-\gamma_2 & \gamma_2 \\
0 & 0 & h & h' \\
\end{pmatrix},$$ where $h=-c\gamma_1+(b+c)\gamma_2-2b\gamma_3+2c\gamma_4,$ and   $h'=-b\gamma_1+(b-c)\gamma_2+2c\gamma_3+2b\gamma_4.$ 
In this case, we reduce $M(H,L)$ to 
$$\begin{pmatrix}
1 & 0 & 0 & -b \\
0 & 1 & 0 & -b \\
0 & 0 & 1 & c \\
0 & 0 & 0 & 2c \\
0 & 0 & 0 & 2b
\end{pmatrix}.$$ Since $b$ and $c$ are coprime, $b$ is even and $c$ is odd, we obtain Hermite normal form $$D(H,L)=\begin{pmatrix}
1 & 0 & 0 & 0 \\
0 & 1 & 0 & 0 \\
0 & 0 & 1 & 1 \\
0 & 0 & 0 & 2
\end{pmatrix}$$ and $I(H,L)=2$.

\subsubsection{Case $5$: $d\equiv1\pmod 2$, $b\equiv0\pmod 2$, $a+b\equiv1\pmod 4$ and $a\equiv-c\pmod 4$}

The Gram matrix is $$G(H,L)=\begin{pmatrix}
\gamma_1 &  \gamma_2 & \gamma_3 & \gamma_4 \\
\gamma_1 &  \gamma_2 & \gamma_2-\gamma_3 & \gamma_1-\gamma_2-\gamma_4 \\
2\gamma_1 & 2\gamma_1-2\gamma_2 & \gamma_1-\gamma_2 & \gamma_2 \\
0 & 0 & h & h' \\
\end{pmatrix},$$ where $h=-c\gamma_1+(-b+c)\gamma_2+2b\gamma_3+2c\gamma_4,$ and  $h'=b\gamma_1-(b+c)\gamma_2+2c\gamma_3-2b\gamma_4.$ We reduce $M(H,L)$ to $$\begin{pmatrix}
1 & 0 & 0 & -b \\
0 & 1 & 0 & -b \\
0 & 0 & 1 & c \\
0 & 0 & 0 & 2c \\
0 & 0 & 0 & 2b
\end{pmatrix}.$$ Arguing as in Case $4$, we find that the Hermite form is the same, so again $I(H,L)=2$.

\subsection{Characterizations of the freeness}\label{sectfreenesscyclic}

For each $\beta\in\mathcal{O}_L$, we compute the determinant $D_{\beta}(H,L)$ of the matrix $M_{\beta}(H,L)$. The following table summarizes the results at each of the 5 cases.

\begin{center}
\begin{tabular}{|c|c|c|}
    \hline
    Case & $I(H,L)$ & $D_{\beta}(H,L)$ \\ \hline
    $1$ & $16$ & $16\beta_1\beta_2(b\beta_3^2+2c\beta_3\beta_4-b\beta_4^2)$ \\ \hline
    $2$ & $8$ & $8\beta_2(2\beta_1+\beta_2)(b\beta_3^2+2c\beta_3\beta_4-b\beta_4^2)$ \\  \hline
    $3$ & $8$ & $-8\beta_2(2\beta_1+\beta_2)(c\beta_3^2+2b\beta_3\beta_4-c\beta_4^2)$ \\ \hline
    $4$ & $2$ & $-2(2\beta_2+\beta_3-\beta_4)(4\beta_1+2\beta_2+\beta_3+\beta_4)(c\beta_3^2+2b\beta_3\beta_4-c\beta_4^2)$ \\ \hline
    $5$ & $2$ & $-2(2\beta_2+\beta_3-\beta_4)(4\beta_1+2\beta_2+\beta_3+\beta_4)(c\beta_3^2-2b\beta_3\beta_4-c\beta_4^2)$ \\ \hline
\end{tabular}
\end{center}

We see from the table that $D_{\beta}(H,L)$ is a product of $I(H,L)$, linear polynomials on $\beta_1,\beta_2,\beta_3,\beta_4$ with coprime coefficients, and a quadratic polynomial on $\beta_3$ and $\beta_4$.

We can give a quite uniform treatment for all five cases. Recall that $\beta$ is a generator if and only if $|D_{\beta}(H,L)|=I(H,L)$.

\begin{lema} The linear factors of $D_{\beta}(H,L)$ do not add any restriction on the $\mathfrak{A}_H$-freeness of $\mathcal{O}_L$. More accurately, there are $\beta_i\in\mathbb{Z}$ such that $|D_{\beta}(H,L)|=I(H,L)$ if and only if there are $\beta_i\in\mathbb{Z}$ for which the quadratic factor is $1$ or $-1$.
\end{lema}
\begin{proof}
The left to right implication is trivial. For the first three cases, the converse is trivial as well, as there are $\beta_1,\beta_2\in\mathbb{Z}$ such that the linear factors become $\pm1$, and they do not depend on $\beta_3$ and $\beta_4$. 

We focus on cases $4$ and $5$. If we want the two linear factors to be $1$, then we obtain $$\beta_1=-\frac{\beta_4}{2},\quad\beta_2=\frac{\beta_4-\beta_3+1}{2}.$$ If we want the first one to be $1$ and the second one to be $-1$, we obtain $$\beta_1=-\frac{\beta_4+1}{2},\quad\beta_2=\frac{\beta_4-\beta_3+1}{2}.$$ At least one of these pairs $(\beta_1,\beta_2)$ lie in $\mathbb{Z}^2$ if and only if $\beta_3-\beta_4$ is odd. 

Now, the quadratic factor is $$c\beta_3^2\pm2b\beta_3\beta_4-c\beta_4^2\equiv c(\beta_3^2-\beta_4^2)\pmod 2. $$ If this is $1$ or $-1$, since $c$ is odd, we obtain that $\beta_3^2-\beta_4^2$ is odd, that is, $\beta_3-\beta_4$ is odd. Then choosing one of the pairs $(\beta_1,\beta_2)$ above, we obtain that $|D_{\beta}(H,L)|=I(H,L)$, as we wanted.
\end{proof}

Note that at least one of $b$ and $c$ must be odd. If we choose $b$ to be odd (so we exchange $b$ and $c$ in Cases 3, 4 and 5), then the last factor of $D_{\beta}(H,L)$ is the same up to sign and rearrangements of $\beta_3$ and $\beta_4$. Under this consideration, we obtain:

\begin{teo}\label{teofreenesscyclic} Let $L/\mathbb{Q}$ be a cyclic quartic extension and let $H$ be its non-classical Hopf-Galois structure. The following are equivalent:
\begin{itemize}
    \item[1.] $\mathcal{O}_L$ is $\mathfrak{A}_H$-free.
    \item[2.] The quadratic form $[b,2c,-b]$ represents $1$.
    \item[3.] The equation $x^2-dy^2=b$ is solvable in $\mathbb{Z}$ and has some solution $(x,y)$ such that $b$ divides $x-cy$.
\end{itemize}
Moreover, in that case, $\mathcal{O}_L$ has an $\mathfrak{A}_H$-generator $$\beta=\beta_1\gamma_1+\beta_2\gamma_2+\beta_3\gamma_3+\beta_4\gamma_4,$$ where the values of $\beta_i\in\mathbb{Z}$ are given in the following table,
where $x'=\frac{x-cy}b$ and $y'=\frac{y-x'+1}2$.
\begin{center}
\begin{tabular}{|c|c|c|c|c|}
\hline
    Case(s) & $\beta_1$ & $\beta_2$ & $\beta_3$ & $\beta_4$ \\ \hline
    $1$ & $1$ & $1$ & $x'$ & $y$ \\ \hline
    $2,3$ & $0$ & $1$ & $x'$ & $y$ \\ \hline
    $4$ & $-\lceil y/2\rceil$ & $y'$ & $x'$ & $y$ \\ \hline
    $5$ & $-\lceil y/2\rceil$ & $y'$ & $y$ & $x'$ \\ \hline
\end{tabular}
\end{center}
\end{teo}
\begin{proof}
By the previous lemma, it is enough to work with the quadratic factor. That is, there is some $\beta\in\mathcal{O}_L$ such that $|D_{\beta}(H,L)|=I(H,L)$ if and only if there is some $\beta_3,\beta_4\in\mathbb{Z}$ such that $$b\beta_3^2+2c\beta_3\beta_4-b\beta_4^2=\pm1.$$ We can always choose $\beta_1,\beta_2\in\{-1,1\}$, so the $\mathfrak{A}_H$-freeness of $\mathcal{O}_L$ is equivalent to the existence of $\beta_3,\beta_4\in\mathbb{Z}$ such that $$b\beta_3^2+2c\beta_3\beta_4-b\beta_4^2=s,$$ where $s\in\{-1,1\}$. Since 
$$bU^2+2cUV-bV^2=1 \iff -bU^2-2cUV+bV^2=-1
\iff b V^2+2c (-U)V-b (-U)^2=-1,$$ we have the equivalence between 1 and 2. For the equivalence between 2 and 3, it is enough to check if $bU^2+2cUV-bV^2=1$ has an integral solution. Since $$U=\frac{-cV\pm\sqrt{dV^2+b}}{b},$$ this is equivalent to the existence of integers $x,y$ such that $dy^2+b=x^2$ and $b$ divides $x-cy$.

The explicit expression of an $\mathfrak{A}_H$-generator of $\mathcal{O}_L$ at each case follows from a straightforward computation.
\end{proof}

\begin{rmk}\normalfont Since $b,c>0$, the quadratic form $q=[b,2c,-b]$ is indefinite and reduced in the sense of \cite[Definition 6.2.1.]{buchmannvollmer}. The proper cycle of $q$ (see \cite[Definition 6.10.2]{buchmannvollmer}) is obtained by the iterated application of the normalization of $q$ (see \cite[(6.11)]{buchmannvollmer}) and, by \cite[Proposition 6.10.3.]{buchmannvollmer}, it consists in all reduced quadratic forms properly equivalent to $q$. 
This gives rise to an algorithm to determine the $\mathfrak{A}_H$-freeness of $\mathcal{O}_L$.
\end{rmk}

\begin{rmk}\normalfont In order to have a free generator, 
$b$ has to be a square modulo $d$. When $b=1$ we have the trivial solution $x=1, y=0$ which provides a free generator. 

On the other hand, since $x^2-c^2y^2-b^2y^2=b$, if there is a solution, then $b$ divides the product $(x+cy)(x-cy)$. If $b$ is prime, it must divide one of the factors and from a solution of $x^2-dy^2=b$ we always get a free generator for $\mathcal{O}_L$.
\end{rmk}

\begin{example}\normalfont\label{examplecyclic} In the following table, we provide examples of application of the criteria to specific cyclic quartic extensions of $\mathbb{Q}$. By solution of a Pell equation $x^2-dy^2=b$, we mean a solution $(x,y)$ such that $x-cy$ is divisible by $b$.

\begin{center}

    \begin{tabular}{|c|c|c|c|c|}
    \hline
        {\it Case} &$L$  & Equation & Solution&Free Generator \\
         \hline
        1& $\mathbb{Q}(\sqrt{106+9\sqrt{106}})$&  $x^2-106y^2=9$&$-103,10$&$\gamma_1+\gamma_2-17\gamma_3+10\gamma_4$ \\ 
        \hline
        1& $\mathbb{Q}(\sqrt{10+3\sqrt{10}})$&$x^2-10y^2=3$&$\nexists$&$\nexists$ \\
        \hline
        1& $\mathbb{Q}(\sqrt{274+15\sqrt{274}})$&$x^2-274y^2=15$&$\nexists$&$\nexists$ \\
        \hline
        2&$\mathbb{Q}(\sqrt{13+3\sqrt{13}})$& 
        $x^2-13y^2=3$&$4,-1$&$\gamma_2+2\gamma_3-\gamma_4$ \\
        \hline
        3&$\mathbb{Q}(\sqrt{5+2\sqrt{5}})$&
        $x^2-5y^2=1$&$1,0$&$\gamma_2+\gamma_3$ \\
        \hline
        4&$\mathbb{Q}(\sqrt{39+6\sqrt{13}})$&$x^2-13y^2=3$& $4,-1$&$-\gamma_2+2\gamma_3-\gamma_4$\\
        \hline
        5&$\mathbb{Q}(\sqrt{15+6\sqrt{5}})$&
        $x^2-5y^2=1$&$1,0$&$\gamma_4$ \\
        \hline
    \end{tabular}
 \end{center}

In the second example the nonexistence derives from the fact that $3$ is not a square modulo $10$. In the third one, the equation $x^2-274y^2=15$ has integral solutions. For instance, $x=\pm17$, $y=\pm1$. But for these values we don't get $x-7y$ divisible by $15$. In fact, if we compute the cycle of the reduced indefinite binary quadratic form $[15,14,-15]$ we obtain the forms
$$[-15,16,14],[14,12 ,- 17],[-17,22,9],
[9,32 , -2],[-2,32 ,9],[9,22 ,- 17],[-17,12 ,14],
[14,16,-15].$$
Since we don't get the principal form, the form $[15,14,-15]$ does not represent $1$. Therefore, $\mathcal{O}_L$ is not $\mathfrak{A}_H$-free.
\end{example}

\section{Biquadratic extensions of $\mathbb{Q}$}\label{sectbiquadQ}

Let $L/\mathbb{Q}$ be a biquadratic extension. 
The top field is of the form $L=\mathbb{Q}(\sqrt{m},\sqrt{n}\,)$, for square-free $m,n\in\mathbb{Z}$ and we know from Proposition \ref{basishopfbiquad} that the Hopf-Galois structures on $L/\mathbb{Q}$ are in one-to-one correspondence with its three intermediate fields $E_1$, $E_2$ and $E_3$, which are $\mathbb{Q}(\sqrt{m}\,)$, $\mathbb{Q}(\sqrt{n}\,)$ and $\mathbb{Q}(\sqrt{mn}\,)$. Let us call $d=\mathrm{gcd}(m,n)$, and let $k=\frac{mn}{d^2}$. Then, the third intermediate field is $\mathbb{Q}(\sqrt{k}\,)$. 
Under this consideration, $m$, $n$ and $k$ are completely exchangeable: each of them is recovered from the other two as 
product of coprime parts.
In this section, just as in the cyclic case, we use the procedure described in Section \ref{sectfreenessgeneral} to study the $\mathfrak{A}_H$-freeness of $\mathcal{O}_L$.

\subsection{Integral bases and action}

The following result gives an integral basis of $L/\mathbb{Q}$ 
(see \cite[Exercise 2.43]{marcus}).

\begin{pro}\label{intbasisbiquadr} An integral basis $B$ of $L/\mathbb{Q}$ is given as follows:
\begin{itemize}
    \item[1.] If $m\equiv3\pmod 4$ and $n,k\equiv2\pmod 4$, $B=\left\lbrace1,\sqrt{m},\sqrt{n},\frac{\sqrt{n}+\sqrt{k}}{2}\right\rbrace.$
    \item[2.] If $m\equiv1\pmod 4$ and $n,k\equiv2\hbox{ or }3\pmod 4$, $B=\left\lbrace1,\frac{1+\sqrt{m}}{2},\sqrt{n},\frac{\sqrt{n}+\sqrt{k}}{2}\right\rbrace.$
    \item[3.] If $m,n,k\equiv1\pmod 4$, $B=\left\lbrace1,\frac{1+\sqrt{m}}{2},\frac{1+\sqrt{n}}{2},\left(\frac{1+\sqrt{m}}{2}\right)\left(\frac{1+\sqrt{k}}{2}\right)\right\rbrace.$
\end{itemize}
\end{pro}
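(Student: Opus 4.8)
This is classical (it is \cite[Exercise~2.43]{marcus}); I outline the argument I would use. The idea is to trap $\mathcal{O}_L$ between two explicitly known $\mathbb{Z}$-lattices whose quotient is a $2$-group, and then settle a finite amount of $2$-adic casework. On the lower side, $R_0:=\mathbb{Z}[\sqrt m,\sqrt n,\sqrt k\,]=\mathbb{Z}\oplus\mathbb{Z}\sqrt m\oplus\mathbb{Z}\sqrt n\oplus\mathbb{Z}\sqrt k$ is an order of $L$ (using $\sqrt m\,\sqrt n=d\sqrt k$, $\sqrt m\,\sqrt k=(m/d)\sqrt n$, $\sqrt n\,\sqrt k=(n/d)\sqrt m$, which lie in $R_0$ because $d\mid m$ and $d\mid n$), so $R_0\subseteq\mathcal{O}_L$. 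On the upper side, write $\mathrm{Gal}(L/\mathbb{Q})=\{1,\sigma,\tau,\sigma\tau\}$ with $\sigma,\tau,\sigma\tau$ fixing $E_1,E_2,E_3$ respectively; expanding a general element $\alpha=a+b\sqrt m+c\sqrt n+e\sqrt k$ gives $\mathrm{Tr}_{L/E_1}(\alpha)=2a+2b\sqrt m$, $\mathrm{Tr}_{L/E_2}(\alpha)=2a+2c\sqrt n$, $\mathrm{Tr}_{L/E_3}(\alpha)=2a+2e\sqrt k$, whence
$$\alpha=\tfrac12\Bigl(\mathrm{Tr}_{L/E_1}(\alpha)+\mathrm{Tr}_{L/E_2}(\alpha)+\mathrm{Tr}_{L/E_3}(\alpha)-\mathrm{Tr}_{L/\mathbb{Q}}(\alpha)\Bigr).$$
Since traces of algebraic integers are algebraic integers, $\alpha\in\mathcal{O}_L$ forces $\alpha\in\tfrac12\bigl(\mathcal{O}_{E_1}+\mathcal{O}_{E_2}+\mathcal{O}_{E_3}\bigr)$. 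Substituting the classical descriptions of the quadratic rings $\mathcal{O}_{E_i}$ (equal to $\mathbb{Z}[\sqrt m\,]$ or $\mathbb{Z}[(1+\sqrt m)/2]$ according to $m\bmod4$, and similarly for $n$ and $k$) makes both ends of $R_0\subseteq\mathcal{O}_L\subseteq\tfrac12(\mathcal{O}_{E_1}+\mathcal{O}_{E_2}+\mathcal{O}_{E_3})$ completely explicit, with $2$-power index on the outer side.

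Next I would dispose of the normalisation. The three subfields $E_1,E_2,E_3$ are interchangeable (each of $m,n,k$ is the product of the coprime parts of the other two), so the hypotheses of Proposition~\ref{intbasisbiquadr} may be assumed after relabelling. To see that the three listed cases are exhaustive, note that every rational prime divides exactly $0$ or $2$ of $m,n,k$: if $p\mid d$ then $p\mid m$, $p\mid n$ and $p\nmid k$, while if $p\mid m$ and $p\nmid n$ then $p\mid k$. Hence $mnk$ is a perfect square, which forces $(m,n,k)$ modulo $4$ into precisely one of the patterns $(3,2,2)$, $(1,2,2)$, $(1,3,3)$, $(1,1,1)$ up to order, i.e.\ into Case~1, Case~2 (its two subcases), or Case~3.

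In each case the remaining verification is finite: enumerate representatives of $\tfrac12(\mathcal{O}_{E_1}+\mathcal{O}_{E_2}+\mathcal{O}_{E_3})$ modulo $R_0$, and for each representative $\xi$ decide whether $\xi\in\mathcal{O}_L$ by testing that its characteristic polynomial over $\mathbb{Q}$ has integer coefficients, or equivalently by checking that $\mathrm{Tr}_{L/E_1}(\xi)\in\mathcal{O}_{E_1}$ and $N_{L/E_1}(\xi)\in\mathcal{O}_{E_1}$, which already forces $\xi$ to be integral over $\mathcal{O}_{E_1}$. The integral cosets pin down $\mathcal{O}_L$ as a $\mathbb{Z}$-lattice, and a triangular change of basis starting from $R_0$ (or a discriminant computation) exhibits the four displayed elements as a $\mathbb{Z}$-basis of $\mathcal{O}_L$.

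I expect the real work to be concentrated in this last step at the prime $2$. The factor $\tfrac12$ in the sandwich, compounded by the $\tfrac14$-denominators arising from products such as $\tfrac{1+\sqrt m}{2}\cdot\tfrac{1+\sqrt k}{2}$ in Case~3, requires a careful $2$-adic integrality analysis, with bookkeeping of how $2$ ramifies according to the residues of $m,n,k$ modulo $4$. In Cases~1 and 2 one must additionally check that the two subcases ($n,k\equiv2$ versus $n,k\equiv3\pmod4$) produce the same basis, and in Case~3 the crux is verifying that it is precisely the asymmetric element $\tfrac{1+\sqrt m}{2}\cdot\tfrac{1+\sqrt k}{2}$, rather than some symmetric-looking combination, that completes $\{1,\tfrac{1+\sqrt m}{2},\tfrac{1+\sqrt n}{2}\}$ to a basis of $\mathcal{O}_L$.
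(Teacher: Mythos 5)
The paper offers no proof of this proposition beyond the citation to Marcus, Exercise 2.43, and your outline is essentially the argument that exercise walks through: the trace identity sandwiching $\mathcal{O}_L$ between $\mathbb{Z}[\sqrt m,\sqrt n,\sqrt k\,]$ and $\tfrac12(\mathcal{O}_{E_1}+\mathcal{O}_{E_2}+\mathcal{O}_{E_3})$, followed by a finite $2$-adic coset check. Your reductions are all correct as stated (the trace identity, the exhaustiveness of the residue patterns via $mnk$ being a perfect square, and the trace-and-norm integrality test over $E_1$); the only thing not actually executed is the terminal case enumeration, which is routine bookkeeping once the sandwich is in place.
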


Note that the previous cases cover all possible situations because $m$, $n$ and $k$ can be exchanged conveniently. We translate the strategy and the notation of the case $G\cong C_4$ to this one: for each non-classical Hopf-Galois structure $H$ on $L/\mathbb{Q}$ we first compute the Gram matrix $G(H,L_{B_c})$ where in $L$ we fix the basis $B_c=\{1,\sqrt{m},\sqrt{n},\sqrt{k}\}$, and then change to an integral basis $B$ of Proposition \ref{intbasisbiquadr}. We call $B_c=\{e_1,e_2,e_3,e_4\}$ and $B=\{\gamma_1,\gamma_2,\gamma_3,\gamma_4\}$. 

We fix the presentation of the Galois group $G=\langle\sigma,\tau\,|\,\sigma^2=1,\,\tau^2=1\rangle$ and call $T_1$, $T_2$, $T_3$ as in Section \ref{secthopfstr}. We assume without loss of generality that $\mathbb{Q}(\sqrt{m})=L^{\langle\tau\rangle}$, $\mathbb{Q}(\sqrt{n})=L^{\langle\sigma\rangle}$ and $\mathbb{Q}(\sqrt{k})=L^{\langle\sigma\tau\rangle}$ (otherwise we would reorder suitably $m$, $n$ and $k$). Then, the bases of the non-classical Hopf-Galois structures $H_1$, $H_2$ and $H_3$ are, respectively, $$\left\lbrace\mathrm{Id},\mu_1,\eta_{T_1}+\mu_1\eta_{T_1},\sqrt{m}(\eta_{T_1}-\mu_1\eta_{T_1})\right\rbrace,$$ $$\left\lbrace\mathrm{Id},\mu_2,\eta_{T_2}+\mu_2\eta_{T_2},\sqrt{n}(\eta_{T_2}-\mu_2\eta_{T_2})\right\rbrace,$$ $$\left\lbrace\mathrm{Id},\mu_3,\eta_{T_3}+\mu_3\eta_{T_3},\sqrt{k}(\eta_{T_3}-\mu_3\eta_{T_3})\right\rbrace.$$ 

In order to compute $G(H,L_{B_c})$, we first need the Gram matrix $G(H_c,L_{B_c})$ of the classical Galois structure, i.e. the action of $G$ on the basis $B_c$, which is  
$$\begin{array}{rclrclrcl}
    \sigma(\sqrt{m})&=&-\sqrt{m},& 
    \sigma(\sqrt{n})&=&\sqrt{n},&\sigma(\sqrt{k})&=&-\sqrt{k},\\
    \tau(\sqrt{m})&=&\sqrt{m},&\tau(\sqrt{n})&=&-\sqrt{n},&\tau(\sqrt{k})&=&-\sqrt{k},\\
    \sigma\tau(\sqrt{m})&=&-\sqrt{m},&\quad\sigma\tau(\sqrt{n})&=&-\sqrt{n},&\quad\sigma\tau(\sqrt{k})&=&\sqrt{k}.
\end{array}
$$
Therefore, we have $$G(H_c,L_{B_c})=\begin{pmatrix}
e_1 & e_2 & e_3 & e_4 \\
e_1 & -e_2 & e_3 & -e_4 \\
e_1 & e_2 & -e_3 & -e_4 \\
e_1 & -e_2 & -e_3 & e_4
\end{pmatrix}.$$ Note that we take $\{1_G,\sigma,\tau,\sigma\tau\}$ as basis for $H_c$, so the $i$-th row in $G(H_c,L_{B_c})$ corresponds to the action of the $i$-th element of this basis on $B_c$. To find the Gram matrix $G(H_i,L_{B_c})$ for $i\in\{1,2,3\}$ from $G(H_c,L_{B_c})$, we use that $\mu_i$, $\eta_{T_i}+\mu_i\eta_{T_i}$ and $z(\eta_{T_i}-\mu_i\eta_{T_i})$ act as:
\begin{itemize}
    \item $\tau$, $\sigma+\sigma\tau$ and $z(\sigma-\sigma\tau)$ respectively, if $i=1$.
    \item $\sigma$, $\tau+\sigma\tau$ and $z(\tau-\sigma\tau)$ respectively, if $i=2$.
    \item $\tau\sigma$, $\sigma+\tau$ and $z(\sigma-\tau)$ respectively, if $i=3$.
\end{itemize}
In this procedure, we take  
$\sqrt{m}\sqrt{k}=\dfrac{m}{d}\sqrt{n},\ \sqrt{n}\sqrt{k}=\dfrac{n}{d}\sqrt{m},\ \sqrt{m}\sqrt{n}=d\sqrt{k}.$
Then, we apply the reduction method systematically with the three possible types of biquadratic extension listed in \ref{intbasisbiquadr}, which we call biquadratic extensions of first, second and third type henceforth. Note that, by \cite[Proposition 2.1]{truman}, the biquadratic extensions of third type are the tamely ramified ones.

\subsection{Biquadratic extensions of first type}

In this case, the integral basis $B$ has elements $\gamma_1=e_1,\ \gamma_2=e_2,\ \gamma_3=e_3,\ \gamma_4=\dfrac{e_3+e_4}{2},$ and the Gram matrix of the classical Galois structure is $$G(H_c,L_B)=\begin{pmatrix}
\gamma_1 & \gamma_2 & \gamma_3 & \gamma_4 \\
\gamma_1 & -\gamma_2 & \gamma_3 & \gamma_3-\gamma_4 \\
\gamma_1 & \gamma_2 & -\gamma_3 & -\gamma_4 \\
\gamma_1 & -\gamma_2 & -\gamma_3 & -\gamma_3+\gamma_4
\end{pmatrix}.$$ Then, for the non-classical Hopf-Galois structures, we have $$G(H_1,L)=\begin{pmatrix}
    \gamma_1 & \gamma_2 & \gamma_3 & \gamma_4 \\
    \gamma_1 & \gamma_2 & -\gamma_3 & -\gamma_4 \\
    2\gamma_1 & -2\gamma_2 & 0 & 0 \\
    0 & 0 & -2d\gamma_3+4d\gamma_4 & (-\frac{m}{d}-d)\gamma_3+2d\gamma_4
    \end{pmatrix},$$ 
    $$G(H_2,L)=\begin{pmatrix}
    \gamma_1 & \gamma_2 & \gamma_3 & \gamma_4 \\
    \gamma_1 & -\gamma_2 & \gamma_3 & \gamma_3-\gamma_4 \\
    2\gamma_1 & 0 & -2\gamma_3 & -\gamma_3 \\
    0 & -2d\gamma_3+4d\gamma_4 & 0 & -\frac{n}{d}\gamma_2
    \end{pmatrix},$$
    
    $$G(H_3,L)=\begin{pmatrix}
    \gamma_1 & \gamma_2 & \gamma_3 & \gamma_4 \\
    \gamma_1 & -\gamma_2 & -\gamma_3 & -\gamma_3+\gamma_4 \\
    2\gamma_1 & 0 & 0 & \gamma_3-2\gamma_4 \\
    0 & -\frac{2m}{d}\gamma_3 & \frac{2n}{d}\gamma_2 & \frac{n}{d}\gamma_2
    \end{pmatrix}.$$

\subsubsection{Reduced matrices}\label{sectredbiquad1}

The matrix $M(H_1,L)$ can be reduced to $$\begin{pmatrix}
1 & 1 & 2 & 0 \\
0 & 2 & 2 & 2d \\
0 & 0 & 4 & 0 \\
0 & 0 & 0 & d+\frac{m}{d} \\
0 & 0 & 0 & 4d
\end{pmatrix}.$$ With the last two rows 
we can use Bézout's identity
to get $g=\gcd(d+\frac{m}{d},4d)$ in one row and $0$ in the other one. We claim that $g=4$. Indeed, since $m\equiv3\pmod 4$ and $m=d\frac{m}{d}$, one of $d$ and $\frac{m}{d}$ is $3$ mod $4$ and the other one is $1$ mod $4$. Then,  $d+\frac{m}{d}\equiv 0\pmod 4$. On the other hand, since $m$ is squarefree, $\gcd(d+\frac{m}{d},d)=1$. 
Finally, we reduce the non-zero entry of the column to $2d\mod 4$, which is $2$, since $d$ is odd. 
Thus, $M(H_1,L)$ has Hermite normal form $$D(H_1,L)=\begin{pmatrix}
1 & 1 & 2 & 0 \\
0 & 2 & 2 & 2 \\
0 & 0 & 4 & 0 \\
0 & 0 & 0 & 4
\end{pmatrix}.$$ This leads to the index $I(H_1,L)=32$. \\ 

For $H_2$, we can reduce $M(H_2,L)$ to $$\begin{pmatrix}
1 & 0 & -1 & 0 \\
0 & 1 & -1 & 0 \\
0 & 0 & 4 & 0 \\
0 & 0 & 0 & \frac{n}{d} \\
0 & 0 & 0 & -2d
\end{pmatrix}.$$ 
Now we use $\gcd(2d,\frac{n}{d})=2$ since $n$ is even, $d$ is odd and $d$ and $\frac nd$ are coprime.
Then, the Hermite normal form is $$D(H_2,L)=\begin{pmatrix}
1 & 0 & -1 & 0 \\
0 & 1 & -1 & 0 \\
0 & 0 & 4 & 0 \\
0 & 0 & 0 & 2
\end{pmatrix}$$ and  the index is $I(H_2,L)=8$. \\ 

For the third non-classical Hopf-Galois structure, we may reduce $M(H_3,L)$ to the matrix $$\begin{pmatrix}
1 & 0 & -1 & 0 \\
0 & 1 & -1 & 0 \\
0 & 0 & 4 & 0 \\
0 & 0 & 0 & \frac{n}{d} \\
0 & 0 & 0 & 2\frac{m}{d}
\end{pmatrix}$$ 
and then use that $\gcd(\frac{n}{d},2\frac{m}{d})=2$ 
to get  Hermite normal form  $$D(H_3,L)=\begin{pmatrix}
1 & 0 & -1 & 0 \\
0 & 1 & -1 & 0 \\
0 & 0 & 4 & 0 \\
0 & 0 & 0 & 2
\end{pmatrix},$$ and $I(H_3,L)=8$.

\subsubsection{Freeness over the associated order}

Now, we study the freeness of $\mathcal{O}_L$ over its associated orders in $H_1$, $H_2$ and $H_3$. Given $\beta\in\mathcal{O}_L$, $$D_{\beta}(H_1,L)=-32\beta_1\beta_2\left(d\beta_3^2+d\beta_3\beta_4+\frac{1}{4}\left(d+\frac{m}{d}\right)\beta_4^2\right),$$ $$D_{\beta}(H_2,L)=8\beta_1(2\beta_3+\beta_4)\left(2d\beta_2^2+\frac{n}{2d}\beta_4^2\right),$$ $$D_{\beta}(H_3,L)=8\beta_1\beta_4\left(2\frac{m}{d}\beta_2^2+2\frac{n}{d}\beta_3^2+2\frac{n}{d}\beta_3\beta_4+\frac{n}{2d}\beta_4^2\right).$$ 

\begin{pro}\label{profreebiquadQ1} For $i\in\{1,2,3\}$, $\mathcal{O}_L$ is $\mathfrak{A}_{H_i}$-free if and only if there exist integers $x,y\in\mathbb{Z}$ such that at least one of the following equations is satisfied:
\begin{itemize}
    \item[1.] $x^2+my^2=\pm4d$, if $i=1$.
    \item[2.] $x^2+ny^2=\pm2d$, if $i=2$.
    \item[3.] $x^2+ky^2=\pm2\frac{n}{d}$, if $i=3$.
\end{itemize}
If that is the case, then a free generator of $\mathcal{O}_L$ as $\mathfrak{A}_{H_i}$-module is $$\beta=\begin{cases}
\gamma_1+\gamma_2+\frac{x-dy}{2d}\,\gamma_3+y\,\gamma_4 & \hbox{if }i=1 \\
\gamma_1+\frac{x}{2d}\,\gamma_2+\frac{1-y}{2}\,\gamma_3+y\,\gamma_4 & \hbox{if }i=2 \\
\gamma_1+\frac{y}{2}\,\gamma_2+\frac{xd-n}{2n}\,\gamma_3+\gamma_4 & \hbox{if }i=3
\end{cases}$$
\end{pro}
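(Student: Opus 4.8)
The plan is to apply, for each $i\in\{1,2,3\}$, the freeness criterion recalled earlier: since $B$ is an integral basis, $\mathcal{O}_L$ is $\mathfrak{A}_{H_i}$-free precisely when there is an integer vector $(\beta_1,\dots,\beta_4)$ with $|D_{\beta}(H_i,L)|=I(H_i,L)$. The formulas for $D_{\beta}(H_i,L)$ and $I(H_i,L)$ just computed show that, after cancelling the common numerical factor ($32$ when $i=1$, $8$ when $i=2,3$), this amounts to choosing integers $\beta_j$ so that the leftover product of linear forms times the single binary quadratic form equals $\pm1$. So the argument splits into disposing of the linear factors and deciding when that quadratic form represents $\pm1$ over $\mathbb{Z}$.

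For the linear factors: in $D_{\beta}(H_1,L)$ they are the unconstrained coordinates $\beta_1,\beta_2$, so put $\beta_1=\beta_2=1$; in $D_{\beta}(H_2,L)$ one factor is the unconstrained $\beta_1$ and the other is $2\beta_3+\beta_4$, which equals $1$ for $\beta_3=\tfrac{1-\beta_4}{2}$ once $\beta_4$ is odd; in $D_{\beta}(H_3,L)$ the factors are $\beta_1$ and $\beta_4$, and I take $\beta_1=\beta_4=1$. In each case one is left with a single binary quadratic form that must represent $\pm1$; the additional parity and divisibility constraints imposed by integrality of $\beta$ will be shown to be automatic.

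For the quadratic factor I clear the denominator $2d$ and complete the square. When $i=1$, $4d\bigl(d\beta_3^2+d\beta_3\beta_4+\tfrac14(d+\tfrac md)\beta_4^2\bigr)=\bigl(d(2\beta_3+\beta_4)\bigr)^2+m\beta_4^2$, so with $x=d(2\beta_3+\beta_4)$ and $y=\beta_4$ the factor is $\pm1$ iff $x^2+my^2=\pm4d$. When $i=2$, $2d\bigl(2d\beta_2^2+\tfrac{n}{2d}\beta_4^2\bigr)=(2d\beta_2)^2+n\beta_4^2$, giving $x^2+ny^2=\pm2d$ with $x=2d\beta_2$, $y=\beta_4$. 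When $i=3$, setting $\beta_4=1$ turns $2d$ times the quadratic factor into $m(2\beta_2)^2+n(2\beta_3+1)^2$, and multiplying again by $\tfrac nd$ gives $\bigl(\tfrac nd(2\beta_3+1)\bigr)^2+k(2\beta_2)^2$, i.e.\ $x^2+ky^2=\pm2\tfrac nd$ with $x=\tfrac nd(2\beta_3+1)$, $y=2\beta_2$. In the direction ``free generator $\Rightarrow$ equation solvable'' these substitutions immediately return an integer solution, since all factors of $D_{\beta}(H_i,L)$ are then $\pm1$.

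The main obstacle is the converse: recovering from an arbitrary integer solution $(x,y)$ an \emph{integral} generator of the announced shape, i.e.\ showing the equation forces $2d\mid x-dy$ for $i=1$, $2d\mid x$ together with $y$ odd for $i=2$, and $\tfrac nd\mid x$ with $\tfrac{x}{n/d}$ odd and $y$ even for $i=3$. These follow by combining squarefreeness of $d$, $\tfrac md$, $\tfrac nd$ (so $d\mid x^2\Rightarrow d\mid x$, and similarly for $\tfrac nd$) with the congruences defining a first-type extension ($m\equiv3$, $n\equiv k\equiv2\bmod4$, hence $d$ and $\tfrac md$ odd and $n,k$ exactly divisible by $2$). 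For example, when $i=1$, reducing $x^2+my^2=\pm4d$ mod $d$ gives $d\mid x$, and reducing mod $4$ gives $x\equiv y\bmod2$, so $x=dx'$ with $x'\equiv y\bmod2$ and $\beta_3=\tfrac{x-dy}{2d}=\tfrac{x'-y}{2}\in\mathbb{Z}$; the cases $i=2,3$ are entirely analogous, with the needed parities extracted from reductions modulo $4$ (and modulo $8$ where appropriate) and the divisibilities by $d$ or $\tfrac nd$ from squarefreeness. Plugging the resulting $\beta_j$ back in yields exactly the generators displayed in the statement.
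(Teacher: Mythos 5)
Your proposal is correct and follows essentially the same route as the paper: the same criterion $|D_{\beta}(H_i,L)|=I(H_i,L)$, the same disposal of the linear factors, and the same divisibility/parity arguments from squarefreeness and the congruences mod $4$ — your completion of the square is just a repackaging of the paper's discriminant computation. The only blemish is a bookkeeping slip in case $i=3$: after clearing $2d$ you should multiply by $\tfrac{n}{d^2}$ (not $\tfrac{n}{d}$) to land on $\bigl(\tfrac nd(2\beta_3+1)\bigr)^2+k(2\beta_2)^2=\pm\tfrac{2n}{d}$, but the substitution you then use is the correct one, so nothing downstream is affected.
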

\begin{proof}
We proceed at each case as in Section \ref{sectcyclicquarticQ}.
\begin{itemize}
    \item[1.] $\mathcal{O}_L$ is $\mathfrak{A}_H$-free if and only if $$\beta_1\beta_2\left(d\beta_3^2+d\beta_3\beta_4+\frac{1}{4}\left(d+\frac{m}{d}\right)\beta_4^2\right)=s,$$ where $s\in\{-1,1\}$. We can always choose $\beta_1,\beta_2\in\mathbb{Z}$ such that $\beta_1\beta_2=\pm1$, so it is enough to consider the quadratic factor $$d\beta_3^2+d\beta_3\beta_4+\frac{1}{4}\left(d+\frac{m}{d}\right)\beta_4^2=s.$$ We regard this as a quadratic equation in $\beta_3$ with parameter $\beta_4=y$.
    The condition of the discriminant being a square gives 
    $-my^2+4ds=x^2$. Then, $\beta_3=\frac{-dy\pm x}{2d}$. The 
    equation is solvable in $\mathbb{Z}$ if and only if  $2d$ divides at least one of $-dy\pm x$. Let us see that this happens if and only if it divides both. Since $m\equiv -1\pmod 4$, we have $x^2\equiv y^2\pmod 4$ and therefore $x\equiv y\pmod 2$. Since $d$ is odd, this gives $-dy\pm x$ even. On the other hand, since $d$ divides $m$, we have $x^2\equiv 0\pmod d$. Since $d$ is square free, it divides $x$ and therefore $-dy\pm x$.
\item[2.] In this case, the equation we must consider is $$2d\beta_2^2+\dfrac{n}{2d}\beta_4^2=s.$$
    Namely, $4d^2\beta_2^2+{n}\beta_4^2=2ds$, whence $x=2d\beta_2$ and $y=\beta_4$. Note that, for any solution of $-ny^2+2ds=x^2$, since $2d$ divides $n$ and it is squarefree, it also divides $x$. On the other hand, the equation gives $2y^2\equiv 2\pmod 4$ and therefore $y$ odd.
    The remaining linear factor $2\beta_3+\beta_4$ becomes $1$ for $\beta_3=\frac{1-y}2$.
\item[3.] Here the situation is slightly different  since $$2\frac{n}{d}\beta_3^2+2\frac{n}{d}\beta_3\beta_4+2\frac{m}{d}\beta_2^2+\frac{n}{2d}\beta_4^2=s,$$ 
    is a ternary quadratic form. But $\beta_4$ is a factor of $D_{\beta}(H_3,L)$ and it must be $\pm 1$. We may assume that $\beta_4=1$ since it does not affect the discriminant, which is $4\left(-4k\beta_2^2+2\frac{n}{d}s\right).$ With $x,y$ as in the statement, $\beta_2=\dfrac{y}{2}$ and 
    $\beta_3=\dfrac{-2\frac{n}{d}\pm 2x}{4\frac{n}{d}}=\dfrac{-n\pm dx}{2n}.$ From  $x^2+ky^2=2\frac{n}{d}s$ and $n\equiv k\equiv 2\pmod 4$ we obtain that $x$ and $y$ are even. Equally, since $\frac nd$ divides $k$ and it is squarefree, we have that it divides $x$.  
    \end{itemize}
    \end{proof}

\begin{rmk}\normalfont
Note that the equations and generators may also be written as
\begin{itemize}
    \item[1.] $du^2+\dfrac md y^2=\pm4,\quad \beta=\gamma_1+\gamma_2+\dfrac{u-y}{2}\,\gamma_3+y\,\gamma_4$, 
    \item[2.] $du^2+\dfrac nd y^2=\pm2,\quad \beta=\gamma_1+\dfrac{u}{2}\,\gamma_2+\dfrac{1-y}{2}\,\gamma_3+y\,\gamma_4$,
    \item[3.] $\dfrac nd u^2+\dfrac mdy^2=\pm2,\quad \beta=\gamma_1+\dfrac{y}{2}\,\gamma_2+\dfrac{u-1}2\,\gamma_3+\gamma_4 $,
\end{itemize}
by replacing $x=d u$ in cases 1 and 2, and $x=\dfrac nd u$ in case 3. 
\end{rmk}

\begin{rmk}\normalfont
If $m,n$ are positive and coprime, $\mathcal{O}_L$ is always $\mathfrak{A}_{H_1}$-free with generator $\gamma_1+\gamma_2+\gamma_3$. For $i\in\{2,3\}$, $\mathcal{O}_L$ is $\mathfrak{A}_{H_i}$-free if and only if $n=2$. In that case, $\gamma_1+\gamma_4$ is a generator. 
\end{rmk}

\subsection{Biquadratic extensions of second type}

We have $$\gamma_1=e_1,\quad \gamma_2=\frac{e_1+e_2}{2},\quad\gamma_3=e_3,\quad\gamma_4=\frac{e_3+e_4}{2},$$ and then $$G(H_c,L_B)=\begin{pmatrix}
\gamma_1 & \gamma_2 & \gamma_3 & \gamma_4 \\
\gamma_1 & \gamma_1-\gamma_2 & \gamma_3 & \gamma_3-\gamma_4 \\
\gamma_1 & \gamma_2 & -\gamma_3 & -\gamma_4 \\
\gamma_1 & \gamma_1-\gamma_2 & -\gamma_3 & -\gamma_3+\gamma_4
\end{pmatrix}.$$ Thus, for the non-classical Hopf-Galois structures, one computes $$G(H_1,L)=\begin{pmatrix}
    \gamma_1 & \gamma_2 & \gamma_3 & \gamma_4 \\
    \gamma_1 & \gamma_2 & -\gamma_3 & 
    -\gamma_4 \\
    2\gamma_1 & 2\gamma_1-2\gamma_2 & 0 & 0 \\
    0 & 0 & -2d\gamma_3+4d\gamma_4 & (-\frac{m}{d}-d)\gamma_3+2d\gamma_4
    \end{pmatrix},$$ $$G(H_2,L)=\begin{pmatrix}
    \gamma_1 & \gamma_2 & \gamma_3 & \gamma_4 \\
    \gamma_1 & \gamma_1-\gamma_2 & \gamma_3 & \gamma_3-\gamma_4 \\
    2\gamma_1 & \gamma_1 & -2\gamma_3 & -\gamma_3 \\
    0 & -d\gamma_3+2d\gamma_4 & 0 & \frac{n}{d}\gamma_1-\frac{2n}{d}\gamma_2
    \end{pmatrix},$$ $$G(H_3,L)=\begin{pmatrix}
    \gamma_1 & \gamma_2 & \gamma_3 & \gamma_4 \\
    \gamma_1 & \gamma_1-\gamma_2 & -\gamma_3 & -\gamma_3+\gamma_4 \\
    2\gamma_1 & \gamma_1 & 0 & \gamma_3-2\gamma_4 \\
    0 & -\frac{m}{d}\gamma_3 & -\frac{2n}{d}\gamma_1+\frac{4n}{d}\gamma_2 & -\frac{n}{d}\gamma_1+\frac{2n}{d}\gamma_2
    \end{pmatrix}.$$
    
\subsubsection{Reduced matrices}\label{sectredbiquad2}

For $H_1$, we may reduce the matrix of the action to $$\begin{pmatrix}
1 & 1 & 0 & 0 \\
0 & 2 & 0 & 2d \\
0 & 0 & 2 & 0 \\
0 & 0 & 0 & \frac{m}{d}+d \\
0 & 0 & 0 & 4d
\end{pmatrix}.$$ 
We have $\gcd(\frac{m}{d}+d,4d)=2$ since
$d$ and $\frac nd$ are coprime and $m=d\frac{m}{d}\equiv1\pmod 4$, so $d\equiv\frac{m}{d}\pmod 4$, and then $d+\frac{m}{d}\equiv2\pmod 4$. 
Therefore, the Hermite normal form in this case is $$D(H_1,L)=\begin{pmatrix}
1 & 1 & 0 & 0 \\
0 & 2 & 0 & 0 \\
0 & 0 & 2 & 0 \\
0 & 0 & 0 & 2
\end{pmatrix},$$ and $I(H_1,L)=8$. 
We reduce $M(H_2,L)$ and $M(H_3,L)$ to 
$$\begin{pmatrix}
1 & 0 & 1 & 0 \\
0 & 1 & 1 & 0 \\
0 & 0 & 2 & 0 \\
0 & 0 & 0 & \frac{n}{d} \\
0 & 0 & 0 & d
\end{pmatrix},
\quad
\begin{pmatrix}
1 & 0 & 1 & 0 \\
0 & 1 & 1 & 0 \\
0 & 0 & 2 & 0 \\
0 & 0 & 0 & \frac{m}{d} \\
0 & 0 & 0 & \frac{n}{d}
\end{pmatrix}
$$ respectively.  Since $d$ and $\frac{n}{d}$ are coprime, and the same happens for 
$\frac{m}{d}$ and $\frac{n}{d}$, in both cases we get
Hermite normal form $$D(H_i,L)=\begin{pmatrix}
1 & 0 & 1 & 0 \\
0 & 1 & 1 & 0 \\
0 & 0 & 2 & 0 \\
0 & 0 & 0 & 1
\end{pmatrix}$$
and index $I(H_i,L)=2$. 

\subsubsection{Freeness over the associated order}

Let us study the freeness. Given $\beta\in\mathcal{O}_L$,
$$\begin{array}{l}
D_{\beta}(H_1,L)=-8\beta_2(2\beta_1+\beta_2)\left(2d\beta_3^2+2d\beta_3\beta_4+\frac{1}{2}\left(d+\frac{m}{d}\right)\beta_4^2\right),\\[1ex]
D_{\beta}(H_2,L)=4(2\beta_1+\beta_2)(2\beta_3+\beta_4)\left(d\beta_2^2+\frac{n}{d}\beta_4^2\right),
\\[1ex] D_{\beta}(H_3,L)=4\beta_4(2\beta_1+\beta_2)\left(\frac{m}{d}\beta_2^2+4\frac{n}{d}\beta_3^2+4\frac{n}{d}\beta_3\beta_4+\frac{n}{d}\beta_4^2\right).
\end{array}$$ We see that $4$ divides $D_{\beta}(H_i,L)$ for $i\in\{2,3\}$ while $I(H_i,L)=2$ for $i\in\{1,2\}$. Hence, $\mathcal{O}_L$ is neither $\mathfrak{A}_{H_2}$-free nor $\mathfrak{A}_{H_3}$-free. We are left with $H_1$.

\begin{pro}\label{profreebiquadQ2} $\mathcal{O}_L$ is $\mathfrak{A}_{H_1}$-free if and only if there exist integers $x,y\in\mathbb{Z}$ such that at least one of the equations $x^2+my^2=\pm2d$ is satisfied. If it is so, a free generator of $\mathcal{O}_L$ as $\mathfrak{A}_{H_1}$-module is $$\beta=\gamma_1-\gamma_2+\frac{x-dy}{2d}\,\gamma_3+y\,\gamma_4.$$
\end{pro}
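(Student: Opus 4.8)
The plan is to run the same argument already used for the cyclic quartic case and for $H_1$ in the first type. By the freeness criterion together with the computed index $I(H_1,L)=8$ and the formula for $D_{\beta}(H_1,L)$ recorded above, $\mathcal{O}_L$ is $\mathfrak{A}_{H_1}$-free if and only if there is $\beta=\sum_{j}\beta_j\gamma_j\in\mathcal{O}_L$ with $|D_{\beta}(H_1,L)|=8$, i.e.
$$\beta_2\,(2\beta_1+\beta_2)\left(2d\beta_3^2+2d\beta_3\beta_4+\tfrac12\big(d+\tfrac{m}{d}\big)\beta_4^2\right)=\pm1 .$$
This forces each of the three factors to equal $\pm1$ (and, since the product is nonzero, $M_{\beta}(H_1,L)$ is invertible, so such a $\beta$ automatically generates $L$ as $H_1$-module). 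First I would check that the two linear factors impose no constraint: fixing $\beta_2=-1$, the equation $2\beta_1+\beta_2=\pm1$ is solved by $\beta_1\in\{0,1\}$, so any prescribed sign pattern for the linear factors is attainable. Hence $\mathfrak{A}_{H_1}$-freeness is equivalent to the solvability in $\mathbb{Z}$ of $2d\beta_3^2+2d\beta_3\beta_4+\tfrac12\big(d+\tfrac{m}{d}\big)\beta_4^2=s$ for some $s\in\{-1,1\}$.

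Next I would regard this as a quadratic equation in $\beta_3$ with $\beta_4=y$ as parameter. Its discriminant is $4\big(-my^2+2ds\big)$, so a rational root exists precisely when $-my^2+2ds=x^2$ for some $x\in\mathbb{Z}$, that is, when $x^2+my^2=\pm2d$ is solvable; and then $\beta_3=\dfrac{-dy\pm x}{2d}$. The crux is to show this root is an integer. Reducing $x^2+my^2=\pm2d$ modulo $2$ and using that $m$ is odd gives $x\equiv y\pmod2$, whence $-dy\pm x$ is even because $d$ is odd; reducing modulo $d$ and using $d\mid m$ with $d$ square-free gives $d\mid x$, hence $d\mid(-dy\pm x)$. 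Since $\gcd(2,d)=1$, these combine to $2d\mid(-dy\pm x)$, so $\beta_3\in\mathbb{Z}$ for either sign. Taking the $+$ sign, $\beta_4=y$, $\beta_3=\frac{x-dy}{2d}$ and $(\beta_1,\beta_2)=(1,-1)$ yields the stated free generator $\beta=\gamma_1-\gamma_2+\frac{x-dy}{2d}\gamma_3+y\gamma_4$. For the converse, given $\beta$ with $|D_{\beta}(H_1,L)|=8$, the quadratic factor equals some $s=\pm1$; setting $y=\beta_4$ and $x=2d\beta_3+d\beta_4$ one verifies $x^2+my^2=2d\big(2d\beta_3^2+2d\beta_3\beta_4+\tfrac12(d+\tfrac md)\beta_4^2\big)=\pm2d$, closing the equivalence.

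The only genuinely delicate point is the integrality of $\beta_3$, i.e.\ the divisibility $2d\mid x-dy$; everything else is routine bookkeeping. This is the same congruence argument as in Proposition \ref{profreebiquadQ1}(1), the sole difference being the right-hand side $2d$ rather than $4d$, and it again rests only on the parities ($m$ odd, $d$ odd) and on $d$ being square-free and dividing $m$.
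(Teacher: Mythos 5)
Your proposal is correct and follows essentially the same route as the paper: reduce to the quadratic factor $2d\beta_3^2+2d\beta_3\beta_4+\tfrac12(d+\tfrac md)\beta_4^2=\pm1$, solve for $\beta_3$ via the discriminant $4(-my^2+2ds)$, and establish integrality of $\beta_3=\frac{-dy\pm x}{2d}$ from $d\mid x$ (using $d\mid m$, $d$ square-free) together with a parity argument. The only cosmetic differences are that you derive $x\equiv y\pmod 2$ directly mod $2$ where the paper shows both are odd via mod $4$, and that you spell out the converse substitution $x=2d\beta_3+d\beta_4$ explicitly; both are fine.
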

\begin{proof}
For $s\in\{-1,1\}$, the equation to consider is $$2d\beta_3^2+2d\beta_3\beta_4+\frac{1}{2}\left(d+\frac{m}{d}\right)\beta_4^2=s.$$ 
Taking $\beta_4=y$,
 the discriminant of the equation is $4(-my^2+2ds)$, so this being a square is equivalent to the existence of $x$ and $y$ as in the statement. Then, $\beta_3=\frac{-dy\pm x}{2d}$. 
 From $x^2+my^2=2ds$ we deduce that $d$ divides $x$ and also that $x^2+y^2\equiv2\pmod 4$, which implies that both are odd.
Since $d$ is odd, $-dy\pm x$ is even.

 \begin{rmk}\normalfont
Note that by replancing $x=d u$ the equation and generator may also be written as
$$du^2+\dfrac md y^2=\pm2,\quad \beta=
\gamma_1-\gamma_2+\frac{u-y}{2}\,\gamma_3+y\,\gamma_4.
$$
    \end{rmk}
\end{proof}

\subsection{Biquadratic extensions of third type: the tame case}

The integral basis $B$ is formed by $$\gamma_1=e_1,\quad \gamma_2=\frac{e_1+e_2}{2},\quad\gamma_3=\frac{e_1+e_3}{2},\quad\gamma_4=\frac{1}{4}e_1+\frac{1}{4}e_2+\frac{m}{4d}e_3+\frac{1}{4}e_4,$$ and then 
$$G(H_c,L_B)=\begin{pmatrix}
\gamma_1 & \gamma_2 & \gamma_3 & \gamma_4 \\
\gamma_1 & \gamma_1-\gamma_2 & \gamma_3 & \frac{d-m}{2d}\gamma_1+\frac{m}{d}\gamma_3-\gamma_4 \\
\gamma_1 & \gamma_2 & \gamma_1-\gamma_3 & \gamma_2-\gamma_4 \\
\gamma_1 & \gamma_1-\gamma_2 & \gamma_1-\gamma_3 & \frac{d+m}{2d}\gamma_1-\gamma_2-\frac{m}{d}\gamma_3+\gamma_4
\end{pmatrix}.$$ Then, the Gram matrices of the non-classical Hopf-Galois structures are $$G(H_1,L)=\begin{pmatrix}
    \gamma_1 & \gamma_2 & \gamma_3 & \gamma_4 \\
    \gamma_1 & \gamma_2 & \gamma_1-\gamma_3 & 
    \gamma_2-\gamma_4 \\
    2\gamma_1 & 2\gamma_1-2\gamma_2 & \gamma_1 & \gamma_1-\gamma_2 \\
    0 & 0 & x & y
    \end{pmatrix},$$ $$G(H_2,L)=\begin{pmatrix}
    \gamma_1 & \gamma_2 & \gamma_3 & \gamma_4 \\
    \gamma_1 & \gamma_1-\gamma_2 & \gamma_3 & \left(\frac{1}{2}-\frac{m}{2d}\right)\gamma_1+\frac{m}{d}\gamma_3-\gamma_4 \\
    2\gamma_1 & \gamma_1 & 2\gamma_1-2\gamma_3 & \frac{(m+d)\gamma_1-2m\gamma_3}{2d} \\
    0 & z & 0 & t
    \end{pmatrix},$$ $$G(H_3,L)=\begin{pmatrix}
    \gamma_1 & \gamma_2 & \gamma_3 & \gamma_4 \\
    \gamma_1 & \gamma_1-\gamma_2 & \gamma_1-\gamma_3 & \frac{d+m}{2d}\gamma_1-\gamma_2-\frac{m}{d}\gamma_3+\gamma_4 \\
    2\gamma_1 & \gamma_1 & \gamma_1 & \frac{d-m}{2d}\gamma_1+\gamma_2+\frac{m}{d}\gamma_3-2\gamma_4 \\
    0 & \frac{m}{d}\gamma_1-2\frac{m}{d}\gamma_3 & -\frac{n}{d}\gamma_1+\frac{2n}{d}\gamma_2 & \left(-\frac{m}{2d^2}+\frac{m}{2d}\right)\gamma_1+\frac{m}{d^2}\gamma_2-\frac{m}{d}\gamma_3
    \end{pmatrix},
    $$ 
    where 
    $$x=m\gamma_1-2d\gamma_2-2m\gamma_3+4d\gamma_4,\quad y=\frac{m(m+1)}{2d}\gamma_1-m\gamma_2-\frac{m(m+1)}{d}\gamma_3+2m\gamma_4,$$ $$z=m\gamma_1-2d\gamma_2-2m\gamma_3+4d\gamma_4,\quad t=\left(\frac{n}{2d}+\frac{m}{2}\right)\gamma_1-\left(d+\frac{n}{d}\right)\gamma_2-m\gamma_3+2d\gamma_4.$$
    
\subsubsection{Reduced matrices}\label{sectredbiquad3}
 
The matrix of the action $M(H_1,L)$ reduces to $$\begin{pmatrix}
1 & 0 & 0 & m\left(\frac{m+1}{2d}-1\right) \\
0 & 1 & 0 & -m\left(\frac{m+1}{2d}-1\right) \\
0 & 0 & 1 & -\frac{m(m+1)}{2d} \\
0 & 0 & 0 & \frac{m}{d}(m+1) \\
0 & 0 & 0 & 2d
\end{pmatrix}$$ 
and we have to compute $\gcd\left(\frac{m}{d}(m+1),2d\right)$.
Since $\frac md$ is odd and coprime with $d$, it is equal to  
$\gcd\left(m+1,2d\right)$, which is $2$ since $m\equiv 1\pmod 4$ and $d$ divides $m$. 

Then, the matrix above is equivalent to $$\begin{pmatrix}
1 & 0 & 0 & m\left(\frac{m+1}{2d}-1\right) \\
0 & 1 & 0 & -m\left(\frac{m+1}{2d}-1\right) \\
0 & 0 & 1 & -\frac{m(m+1)}{2d} \\
0 & 0 & 0 & 2
\end{pmatrix}.$$ The entries above $2$ in the fourth column reduce to $0$ or $1$ depending on their parity. Therefore, the Hermite normal form of $M(H_1,L)$ is: $$D(H_1,L)=\begin{pmatrix}
1 & 0 & 0 & 0 \\
0 & 1 & 0 & 0 \\
0 & 0 & 1 & 1 \\
0 & 0 & 0 & 2
\end{pmatrix}.$$ Then, $I(H_1,L)=2$. 

For the second one, we reduce the matrix of the action to $$\begin{pmatrix}
1 & 0 & 0 & \frac{3m^2+n}{2d} \\
0 & 1 & 0 & \frac{-9m^2+n}{2d} \\
0 & 0 & 1 & \frac{-2md-3m^2-n}{2d} \\
0 & 0 & 0 & d+\frac{n}{d} \\
0 & 0 & 0 & m+d \\
0 & 0 & 0 & \frac{2n}{d} \\
0 & 0 & 0 & 2d
\end{pmatrix}.$$ Since the greatest common divisor of $2d$ and $\frac{2n}{d}$ is $2$, arguing as in previous cases, we obtain the same Hermite normal form $D(H_2,L)=D(H_1,L)$ and therefore
also $I(H_2,L)=2$. 

Finally, we reduce $M(H_3,L)$ to $$\begin{pmatrix}
1 & 0 & 0 & \frac{nd^2+m(m-n)d-m^2n}{2d^3} \\
0 & 1 & 0 & \frac{nd^2+m(m-n)d-m^2n}{2d^3} \\
0 & 0 & 1 & \frac{nd^2-m(m-n)d+m^2n}{2d^3} \\
0 & 0 & 0 & \frac{m^2}{d^2}(d-n) \\
0 & 0 & 0 & \frac{m+d}{d}\frac{m}{d} \\
0 & 0 & 0 & \frac{m+n}{d} \\
0 & 0 & 0 & \frac{2n}{d}
\end{pmatrix}.$$ Let us focus in the last two entries. Since $m$ and $n$ are $1$ mod $4$, $m+n$ is $2$ mod $4$, just as $2n$. Then, $\mathrm{gcd}(m+n,2n)=2\mathrm{gcd}(m+n,n)=2d$. Thus, $\mathrm{gcd}(\frac{m+n}{d},\frac{2n}{d})=2$. Then, the Hermite normal form and the index are exactly the same as before.

\subsubsection{Freeness over the associated order}

Let $\beta\in\mathcal{O}_L$. Then, we have $$
\begin{array}{l}
D_{\beta}(H_1,L)=-2(2\beta_2+\beta_4)(4\beta_1+2\beta_2+2\beta_3+\beta_4)\left(2d\beta_3^2+2m\beta_3\beta_4+\frac{m}{d}\frac{m+1}{2}\beta_4^2\right),
\\[1ex]
D_{\beta}(H_2,L)=2\left(2\beta_3+\frac{m}{d}\beta_4\right)(4\beta_1+2\beta_2+2\beta_3+\beta_4)\left(2d\beta_2^2+2d\beta_2\beta_4+\frac{1}{2}\left(d+\frac{n}{d}\right)\beta_4^2\right),$$ \\[1ex]
D_{\beta}(H_3,L)=2\beta_4(4\beta_1+2\beta_2+2\beta_3+\beta_4)q_3(\beta_1,\beta_2,\beta_3,\beta_4),
\end{array}$$ where
$$
        q_3(\beta_1,\beta_2,\beta_3,\beta_4)=2\frac{m}{d}\beta_2^2+2\frac{m}{d}\beta_2\beta_4+2\frac{n}{d}\beta_3^2+2k\beta_3\beta_4+\frac{m}{d}\frac{k+1}{2}\beta_4^2.
$$

\begin{pro}\label{profreebiquadQ3} For $i\in\{1,2,3\}$, $\mathcal{O}_L$ is $\mathfrak{A}_{H_i}$-free if and only if there exist integers $x,y\in\mathbb{Z}$ such that at least one of the following equations is satisfied:
\begin{itemize}
    \item[1.] $x^2+my^2=\pm 2d$, if $i=1$.
    \item[2.] $x^2+ny^2=\pm 2d$, if $i=2$.
    \item[3.] $x^2+ky^2=\pm 2\frac{n}{d}$, if $i=3$.
\end{itemize}

In that case, a generator of $\mathcal{O}_L$ as $\mathfrak{A}_{H_i}$-module is $$\beta=\begin{cases}
\dfrac12\left(\dfrac{my-x}{2d}-\epsilon\right)\,\gamma_1+\dfrac{1-y}{2}\,\gamma_2+\dfrac{x-my}{2d}\,\gamma_3+y\,\gamma_4 & \hbox{if }i=1
,\ \epsilon=
\frac{x-my}{2d}\pmod 2 ;\\[2ex]
\dfrac14\left(\dfrac{my-x}{d}-\epsilon\right)\,\gamma_1+\dfrac{x-yd}{2d}\,\gamma_2+
\dfrac{d-my}{2d}\,\gamma_3+y\,\gamma_4 & \hbox{if }i=2
,\ \epsilon=\frac{my-x}{d}\pmod 4;\\[3ex]
\dfrac{1}{4}\left(\dfrac{k-x}{\frac nd}-y+\epsilon\right)\,\gamma_1+\dfrac{y-1}{2}\,\gamma_2+\dfrac{x-k}{2\frac{n}{d}}\,\gamma_3+\gamma_4 & \hbox{if }i=3,\  \epsilon=(\frac{k-x}{\frac{n}{d}}-y)\pmod 4.
\end{cases}$$
\end{pro}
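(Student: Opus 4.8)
The plan is to apply the freeness criterion recalled at the end of Section~2: since $K=\mathbb{Q}$ and we have just computed $I(H_i,L)=2$ for every $i\in\{1,2,3\}$, $\mathcal{O}_L$ is $\mathfrak{A}_{H_i}$-free if and only if there is some $\beta=\sum_j\beta_j\gamma_j\in\mathcal{O}_L$ with $|D_\beta(H_i,L)|=2$. The three displayed formulas for $D_\beta(H_i,L)$ each present $D_\beta(H_i,L)$ as $\pm 2$ times a product of three integral factors: two linear forms in $\beta_1,\dots,\beta_4$ (one of them being simply $\beta_4$ when $i=3$) and one quadratic form (the form $q_3$ when $i=3$). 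Hence $|D_\beta(H_i,L)|=2$ is equivalent to demanding that each of those three factors equals $\pm 1$, and I would analyse the quadratic factor and the linear factors in turn.

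For the quadratic factor I would regard the equation ``(quadratic form)$\,=s$'' with $s\in\{-1,1\}$ as a genuine quadratic equation in a single variable: $\beta_3$ when $i=1$, $\beta_2$ when $i=2$, and $\beta_3$ when $i=3$ after setting $\beta_4=1$ (which is legitimate, since the factor $\beta_4$ must be $\pm 1$ and one may replace $\beta$ by $-\beta$). Rational solvability forces the discriminant to be a perfect square $(2x)^2$; carrying out the short simplification of the discriminant and using the relations $k=mn/d^2$ and $\frac{n}{d}\cdot\frac{m}{d}=k$, the square condition collapses to exactly the generalized Pell equation listed in the statement, the free parameter being $y=\beta_4$ for $i\in\{1,2\}$ and $y=2\beta_2+1$ for $i=3$. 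Solving the quadratic then yields the claimed value of the corresponding coordinate, namely $\frac{x-my}{2d}$, $\frac{x-dy}{2d}$, $\frac{x-k}{2n/d}$ respectively; conversely, any integer solution $(x,y)$ of the Pell equation produces such a coordinate, which gives the ``only if'' direction of the equivalence.

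The delicate point, which I expect to be the main obstacle, is integrality: one must check that these expressions really lie in $\mathbb{Z}$, i.e. that $2d\mid x-my$ (case $i=1$), $2d\mid x-dy$ (case $i=2$) and $2\frac{n}{d}\mid x-k$ (case $i=3$). Here I would exploit that in a third-type extension $m,n,k\equiv 1\pmod 4$ and that $d,\frac{m}{d},\frac{n}{d}$ are odd, square-free and pairwise coprime: reducing the Pell equation modulo $4$ (its right-hand side is $\equiv 2\pmod 4$) forces $x$ and $y$ both odd, which disposes of the factor $2$; square-freeness of the relevant divisor, together with the fact that it divides both the right-hand side and the cross term, disposes of the odd part. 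The same parity facts show that the auxiliary coordinate forced next (by the middle linear factor, or by completing the square inside $q_3$) — $\beta_2=\frac{1-y}{2}$ for $i=1$, $\beta_3=\frac{d-my}{2d}$ for $i=2$, $\beta_2=\frac{y-1}{2}$ for $i=3$ — is an integer.

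It remains to make the last linear factor $4\beta_1+2\beta_2+2\beta_3+\beta_4$ equal to $\pm1$. With all other coordinates already fixed, this asks $\beta_1$ to be one quarter of one of two integers differing by $2$; a parity computation (again using that $x,y$ are odd and that $d,\frac{m}{d},\frac{n}{d}$ are odd) shows that exactly one of the two sign choices yields $\beta_1\in\mathbb{Z}$, and that choice is the one recorded by the parameter $\epsilon$ in each case of the statement. Collecting $\beta_1,\beta_2,\beta_3,\beta_4$ gives the displayed free generator; conversely, if $\mathcal{O}_L$ is $\mathfrak{A}_{H_i}$-free then the quadratic factor of $D_\beta(H_i,L)$ represents $\pm1$, so its discriminant is a square and the corresponding Pell equation is solvable. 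The only real difficulty beyond bookkeeping is to confirm that the auxiliary coordinates never impose an extra constraint on the Pell solution — which is precisely what the parity arguments above guarantee.
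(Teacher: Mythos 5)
Your proposal is correct and follows essentially the same route as the paper's proof: the same criterion $|D_\beta(H_i,L)|=2=I(H_i,L)$, the same reduction of the quadratic factor to a one-variable quadratic whose discriminant condition is the stated Pell equation (with $y=\beta_4$ for $i\in\{1,2\}$ and $y=2\beta_2+\beta_4$ with $\beta_4=1$ for $i=3$), the same parity and square-freeness arguments for integrality of the coordinates, and the same resolution of the last linear factor via the sign choice recorded by $\epsilon$.
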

\begin{proof}
First, note that if $x$ and $y$ are integers that satisfy any of the equalities above, then $x$ and $y$ are necessarily odd,
since $x^2+y^2\equiv2\pmod 4$.

\begin{itemize}
    \item[1.] We proceed as in the previous cases. The equation to consider is $$2d\beta_3^2+2m\beta_3\beta_4+\frac{m}{d}\frac{m+1}{2}\beta_4^2=s,$$ 
    with discriminant $4(-m\beta_4^2+2ds)$. 
    Therefore, $\beta_4=y$ and 
    $\beta_3=\frac{-my\pm x}{2d}$. Both values are integer because $d$ divides both $x$ and $m$, and $-my+x$ is even. Once we have found $\beta_3$ and $\beta_4$ representing $\pm 1$ we need the linear factors to be also $\pm 1$. 
    Solving $2\beta_2+\beta_4=1$ and  $4\beta_1+2\beta_2+2\beta_3+\beta_4=1$, we get 
    $\beta_2=\dfrac{1-\beta_4}2=\dfrac{1-y}2$
    and $\beta_1=-\dfrac{\beta_3}2$. It is integer when $\beta_3$ is even. But solving $2\beta_2+\beta_4=1$ and  $4\beta_1+2\beta_2+2\beta_3+\beta_4=-1$, we get 
    $\beta_2=\dfrac{1-y}2$
    and $\beta_1=\dfrac{-1-\beta_3}2$, which is integer when $\beta_3$ is odd.  
    \item[2.] In the second case, the quadratic equation is $$2d\beta_2^2+2d\beta_2\beta_4+\frac{1}{2}\left(d+\frac{n}{d}\right)\beta_4^2=s$$
    and the discriminant is $4(-ny^2+2ds)$.  We get solutions $\beta_4=y$ and $\beta_2=\frac{-dy\pm x}{2d}$. 
    Once we have $\beta_2$ and $\beta_4$, we determine the linear factors to be unities.Taking $\beta_3=\frac 12(1-\frac md y)=\frac{d-my}{2d}$ the first one becomes $1$. The equations $4\beta_1+2\beta_2+2\beta_3+\beta_4=\pm1$
    lead to $4\beta_1=\frac{my-x}{d}-\epsilon$ with $\epsilon$ being $0$ or $2$. Since $\frac{my-x}{d}$ is even, we always have a choice which makes the second term congruent to $0\pmod 4$.
     \item[3.] Finally, we must consider the equation $$2\frac{m}{d}\beta_2^2+2\frac{m}{d}\beta_2\beta_4+2\frac{n}{d}\beta_3^2+2k\beta_3\beta_4+\frac{m}{d}\frac{k+1}{2}\beta_4^2=s,$$ 
    and we choose unknown $\beta_3$ and parameters $\beta_2$ and $\beta_4$, with $\beta_4=\pm1$ since it is a factor of $D_{\beta}(H_3,L)$.  The discriminant is $4((2\beta_2+\beta_4)^2k+2\frac{n}{d}s)$. We take $2\beta_2+\beta_4=y$ and then $\beta_3=\frac{-k\beta_4\pm x}{2\frac{n}{d}}$, which are integers since $\frac nd$ divides both $k$ and $x$  and the numerator is even.
    We can safely take $\beta_4=1$, $\beta_2=\frac{y-1}2$,
    $\beta_3=\frac{x-k}{2\frac{n}{d}}$. The remaining condition $4\beta_1+2\beta_2+2\beta_3+\beta_4=\pm1$
    leads to $4\beta_1=\pm1 -y+\frac{k-x}{\frac nd}$.
    Since the second summand is odd, there is always a choice of sign which makes the right term divisible by $4$.
\end{itemize}
\end{proof}

\begin{rmk}\normalfont The criteria obtained in Proposition \ref{profreebiquadQ3} were proved by Truman in \cite[Proposition 6.1]{truman} using the theory of idèles. In his case, he works indistinctly with a non-classical Hopf-Galois structure of a tame biquadratic extension $\mathbb{Q}(\sqrt{m},\sqrt{n})/\mathbb{Q}$ and obtains the same condition in terms of the chosen numbers $m$ and $n$. This fits with our result because $m,n,k$ being $1$ mod $4$ allows to exchange them indistinctly, and $\frac{n}{d}=\mathrm{gcd}(n,k)$. Our Propositions \ref{profreebiquadQ1} and \ref{profreebiquadQ2} show that $\mathcal{O}_L$ presents a similar behaviour as $\mathfrak{A}_{H_i}$-module in case $L/\mathbb{Q}$ is wildly ramified.
\end{rmk}

\subsection{Summary of results}

\begin{teo} Let $L/\mathbb{Q}$ be a biquadratic extension of number fields and let $H_1,\,H_2,\,H_3$ be its non-classical Hopf-Galois structures. The following table summarizes the freeness of $\mathcal{O}_L$ as $\mathfrak{A}_{H_i}$-module for $i\in\{1,2,3\}$.

\normalfont
\begin{center}
\begin{tabular}{|c|c|c|c|c|} \hline
    \multicolumn{2}{|c|}{Mod $4$} & \multicolumn{3}{|c|}{$\mathcal{O}_L$ as $\mathfrak{A}_{H_i}$-module} \\ \hline
    $m$ & $n$ & $H_1$ & $H_2$ & $H_3$ \\ \hline
    $1$ & $1$ & Free $\Longleftrightarrow$ & Free $\Longleftrightarrow$ & Free $\Longleftrightarrow$ \\
    & & $\exists x,y\in\mathbb{Z}\,\colon$ & $\exists x,y\in\mathbb{Z}\,\colon$ & $\exists x,y\in\mathbb{Z}\,\colon$\\
    & & $x^2+my^2=\pm2d$ & $x^2+ny^2=\pm2d$ & $x^2+ky^2=\pm2\frac{n}{d}$ \\ \hline
    $1$ & $\neq1$ & Free $\Longleftrightarrow$ & Not free & Not free\\
    & & $\exists x,y\in\mathbb{Z}\,\colon$ & & \\
    & & $x^2+my^2=\pm2d$ & & \\ \hline
    $3$ & $2$ & Free $\Longleftrightarrow$ & Free $\Longleftrightarrow$ & Free $\Longleftrightarrow$ \\
    & & $\exists x,y\in\mathbb{Z}\,\colon$ & $\exists x,y\in\mathbb{Z}\,\colon$ & $\exists x,y\in\mathbb{Z}\,\colon$\\
    & & $x^2+my^2=\pm4d$ & $x^2+ny^2=\pm2d$ & $x^2+ky^2=\pm2\frac{n}{d}$ \\ \hline
\end{tabular}
\end{center}
\end{teo}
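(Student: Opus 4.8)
The statement is a compilation of Propositions~\ref{profreebiquadQ1}, \ref{profreebiquadQ2} and \ref{profreebiquadQ3} together with the observation made just before Proposition~\ref{profreebiquadQ2}, so the plan is to organize the proof around the three rows of the table and verify that they correspond respectively to the biquadratic extensions of third, second and first type. First I would fix the normalization: since $m$, $n$, $k$ are squarefree and each is recovered from the other two as a product of coprime parts, and since the three quadratic subfields $\mathbb{Q}(\sqrt m)$, $\mathbb{Q}(\sqrt n)$, $\mathbb{Q}(\sqrt k)$ may be labelled in any order (with $H_i$ attached to $E_i$), I would fix the labelling so that $m$ is the invariant of the subfield congruent to $1$ modulo $4$ whenever there is one (and when several are, all of $m,n,k$ are $\equiv 1$).

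The key combinatorial step is a short computation modulo $4$. Writing $d=\gcd(m,n)$, $k=mn/d^{2}$, and using that $d^{2}\equiv 1\pmod 8$ when $d$ is odd together with $v_{2}(k)=v_{2}(mn)-2v_{2}(d)$ (here $v_{2}$ is the $2$-adic valuation), one checks that the only possibilities for the triple $(m,n,k)$ modulo $4$ are $(1,1,1)$, $(1,3,3)$, $(1,2,2)$ and $(3,2,2)$, up to permutation. Under the chosen labelling this says precisely: either $m\equiv n\equiv 1$, which is the third type; or $m\equiv 1$ and $n\not\equiv 1$ (so $n\equiv 2$ or $3$, and $k\equiv n$), which is the second type; or $m\equiv 3$ and $n\equiv 2$ (whence $k\equiv 2$), which is the first type. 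Hence the three rows of the table are exhaustive and mutually exclusive.

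It then remains to transcribe the earlier results row by row, the sign ambiguity $\pm$ on each right-hand side being genuine because $m,n,k$ need not be positive, and already built into those statements. For $m\equiv n\equiv 1$ (third type), Proposition~\ref{profreebiquadQ3} gives the three stated criteria verbatim. For $m\equiv 1$, $n\not\equiv 1$ (second type), Proposition~\ref{profreebiquadQ2} gives the criterion for $H_{1}$; for $H_{2}$ and $H_{3}$ one uses the expressions of $D_{\beta}(H_{2},L)$ and $D_{\beta}(H_{3},L)$ computed just before Proposition~\ref{profreebiquadQ2} together with $I(H_{2},L)=I(H_{3},L)=2$: since $4\mid D_{\beta}(H_{i},L)$ for every $\beta\in\mathcal{O}_{L}$, the index $[\mathcal{O}_{L}:\mathfrak{A}_{H_{i}}\cdot\beta]_{\mathbb{Z}}$ is never trivial, so $\mathcal{O}_{L}$ is not $\mathfrak{A}_{H_{i}}$-free. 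For $m\equiv 3$, $n\equiv 2$ (first type), Proposition~\ref{profreebiquadQ1} supplies all three criteria, the one for $H_{1}$ being the only case where the right-hand side is $\pm 4d$.

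The proof involves no genuine obstacle beyond this bookkeeping; the only point deserving care is checking that the relabelling of the three quadratic subfields is compatible with the indexing $H_{i}\leftrightarrow E_{i}$, so that each criterion is read against the correct invariant among $m$, $n$, $k$ and the correct value of $d$ (e.g. $d=\gcd(m,n)$ with $m\equiv 3$ in the first-type row). If one wishes the table to be fully self-contained, one also records that the equivalent forms of the equations given in the remarks after Propositions~\ref{profreebiquadQ1} and \ref{profreebiquadQ2} (obtained by substituting $x=du$, respectively $x=\tfrac{n}{d}u$) define the same conditions.
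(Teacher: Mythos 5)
Your proposal is correct and matches the paper's (implicit) proof: the theorem is indeed just a compilation of Propositions~\ref{profreebiquadQ1}, \ref{profreebiquadQ2} and \ref{profreebiquadQ3}, with the non-freeness entries for $H_2$ and $H_3$ in the second row coming from the observation that $4\mid D_{\beta}(H_i,L)$ while $I(H_i,L)=2$ for $i\in\{2,3\}$. Your explicit verification that $(m,n,k)$ modulo $4$ can only be $(1,1,1)$, $(1,3,3)$, $(1,2,2)$ or $(3,2,2)$ up to permutation makes precise the paper's remark that the three types are exhaustive after a suitable relabelling, and is a welcome addition rather than a deviation.
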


\section{The connection with generalized Pell equations}\label{sectgenpell}

The conditions obtained in Theorem \ref{teofreenesscyclic} and Propositions \ref{profreebiquadQ1}, \ref{profreebiquadQ2} and \ref{profreebiquadQ3} refer to the solvability in $\mathbb{Z}$ of equations of the form $x^2-Dy^2=N$. This is known as the generalized Pell equation or the Pell-Fermat equation. The equations of this type have been widely studied and algorithms of resolution have been developed (see for example \cite[Section 6.3.5]{cohen}). In \cite[Section 7]{truman}, Truman takes advantage from this fact to obtain results on the behaviour of $\mathcal{O}_L$ when $L/\mathbb{Q}$ is tame biquadratic. Namely, he gives examples of tame biquadratic extensions $L/\mathbb{Q}$ such that $\mathcal{O}_L$ is free over the associated order in $0$, $1$ or $2$ non-classical Hopf-Galois structures, and proves that there is no one with freeness in the three of them.

In this section, we aim to translate the theory of generalized Pell equations to Hopf-Galois theory so as to find results for any quartic Galois extension of $\mathbb{Q}$. Moreover, in the biquadratic cases, Hopf-Galois structures can be exchanged to obtain results on the solvability of the Pell equations themselves. 

\subsection{Cyclic quartic Galois extensions of $\mathbb Q$}

Let us take a cyclic quartic extension $L/\mathbb{Q}$ defined by $a$, $b$, $c$ and $d$. In this section, we follow the convention fixed in Section \ref{sectfreenesscyclic}, i.e. that $b$ is odd, and then $\mathcal{O}_L$ is $\mathfrak{A}_H$-free if and only if at least one of the equations $x^2-dy^2=\pm b$ is solvable and $b$ divides $x-cy$ for some solution $(x,y)$ (see Theorem \ref{teofreenesscyclic}).

When the independent term is $1$, we recover a classical Pell equation, which has the trivial solution $(x,y)=(1,0)$.

\begin{coro} If $b=1$, then $\mathcal{O}_L$ is $\mathfrak{A}_H$-free.
\end{coro}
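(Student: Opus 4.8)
The plan is to simply verify that the freeness criterion of Theorem \ref{teofreenesscyclic} is satisfied when $b=1$. First I would note that, under the convention of Section \ref{sectsummarycyclic} (where $b$ denotes the odd member of the pair $\{b,c\}$, so that $b=1$ is a meaningful hypothesis), Theorem \ref{teofreenesscyclic} applies verbatim: $\mathcal{O}_L$ is $\mathfrak{A}_H$-free if and only if $x^2-dy^2=b$ has an integer solution $(x,y)$ with $b\mid x-cy$. Setting $b=1$, this equation becomes the classical Pell equation $x^2-dy^2=1$, which always admits the trivial solution $(x,y)=(1,0)$. Since $b=1$ divides every integer, the divisibility condition $b\mid x-cy$ is automatic; in particular it holds for $(x,y)=(1,0)$. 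Hence the hypotheses of Theorem \ref{teofreenesscyclic} are met and $\mathcal{O}_L$ is $\mathfrak{A}_H$-free.

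There is essentially no obstacle here — the argument is a one-line appeal to the already-established criterion, the only subtlety being the bookkeeping of which of $b,c$ is taken to be the ``independent term'' coefficient, which is precisely fixed by the convention recalled just before the corollary. Optionally, to make the statement more concrete, I would also carry the trivial solution through the explicit generator formulas: by the remark following Proposition \ref{profreecyclic1}, in Cases 1 and 2 one obtains the free generator $\beta=\gamma_1+\gamma_2+\gamma_3$, and the analogous formulas in Propositions \ref{profreecyclic2}--\ref{profreecyclic5} give explicit free generators in the remaining cases, so the corollary can be stated in constructive form if desired.
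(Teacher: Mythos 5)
Your proof is correct and is exactly the argument the paper uses: the trivial solution $(x,y)=(1,0)$ of $x^2-dy^2=1$ satisfies the criterion of Theorem \ref{teofreenesscyclic}, with the divisibility condition $b\mid x-cy$ vacuous for $b=1$. The observation about the explicit generator $\beta=\gamma_1+\gamma_2+\gamma_3$ likewise matches the remark following Proposition \ref{profreecyclic1}.
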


Another quick fact is inspired by Example \ref{examplecyclic}, where for the second extension we used that $3$ is not a square mod $10$ to conclude non-freeness. 

\begin{pro} Let $L=\mathbb{Q}(\sqrt{a(d+b\sqrt{d})})$ be a cyclic quartic field.
If $d$ is odd and we have Jacobi symbol $\genfrac(){}{0}{b}{d}=-1$ or $d$ is even and
$\genfrac(){}{0}{b}{d/2}=-1$
then $\mathcal{O}_L$ is not $\mathfrak{A}_H$-free.
\end{pro}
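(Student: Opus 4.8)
The plan is to exhibit a congruence obstruction. By Theorem~\ref{teofreenesscyclic} together with the discussion preceding the statement (so that $b$ is odd), $\mathfrak{A}_H$-freeness of $\mathcal{O}_L$ implies that at least one of the equations $x^2-dy^2=\pm b$ has a solution in $\mathbb{Z}$; hence it suffices to prove that, under the hypothesis, neither of these equations is solvable. (In fact, for the implication we only need the necessary condition from Theorem~\ref{teofreenesscyclic}, namely that $x^2-dy^2=b$ be solvable, but it costs nothing to treat both signs.)

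First I would record that $\gcd(b,d)=1$: since $d=b^2+c^2$ is squarefree, no prime can divide both $b$ and $c$ (otherwise its square would divide $d$), so $\gcd(b,c)=1$ and therefore $\gcd(b,d)=\gcd(b,c^2)=1$. Next, set $d_0=d$ if $d$ is odd and $d_0=d/2$ if $d$ is even; this is an integer because $d$ is squarefree. A solution of $x^2-dy^2=\pm b$ gives $x^2\equiv\pm b\pmod{d_0}$, and $\gcd(x,d_0)=1$, since a common prime divisor of $x$ and $d_0$ would divide $b$, against $\gcd(b,d)=1$. Consequently the Jacobi symbol $\genfrac(){}{0}{\pm b}{d_0}$ equals the square of $\genfrac(){}{0}{x}{d_0}$, hence equals $1$.

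The key step is then that $\genfrac(){}{0}{-1}{d_0}=1$, so that $\genfrac(){}{0}{-b}{d_0}=\genfrac(){}{0}{b}{d_0}$. Indeed, by the description of cyclic quartic fields recalled at the beginning of Section~\ref{sectcyclicquarticQ}, $d$ is a product of distinct primes, none congruent to $3\bmod 4$; discarding the possible factor $2$, the odd integer $d_0$ is a product of distinct primes all congruent to $1\bmod 4$, and the Jacobi symbol of $-1$ modulo such an integer is $+1$. Combining this with the previous paragraph, solvability of either of $x^2-dy^2=\pm b$ forces $\genfrac(){}{0}{b}{d_0}=1$. The hypothesis asserts precisely that $\genfrac(){}{0}{b}{d_0}=-1$, so neither equation is solvable, and $\mathcal{O}_L$ is not $\mathfrak{A}_H$-free.

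There is no real difficulty here. The only points requiring care are checking that $b$ is invertible modulo $d_0$, so that the Jacobi symbols involved are nonzero and the reduction argument applies, and, in the even case, the passage to the odd part $d/2$; for the latter one must invoke the structural fact that every odd prime dividing $d$ is $\equiv 1\bmod 4$, which is exactly what makes the sign ambiguity in $\pm b$ harmless.
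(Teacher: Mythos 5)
Your proof is correct and follows essentially the same route as the paper's: both derive a quadratic-residue obstruction modulo the odd part of $d$ from a putative solution of the Pell equation. The paper simply isolates a single odd prime $p\mid d$ with $\genfrac(){}{0}{b}{p}=-1$ and notes that $x^2\equiv b\pmod p$ is then unsolvable, whereas you carry the full Jacobi symbol and additionally rule out the $-b$ equation via $\genfrac(){}{0}{-1}{d_0}=1$; this extra care is harmless but not strictly needed, since Theorem~\ref{teofreenesscyclic} only requires solvability of $x^2-dy^2=b$.
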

\begin{proof}
Under the hypothesis, there is an odd prime $p\mid d$ such that $\genfrac(){}{0}{b}{p}=-1$.
Then $b$ is not a quadratic residue mod $p$ and $x^2-dy^2=b$ is not solvable in $\mathbb{Z}$, 
so $\mathcal{O}_L$ is not $\mathfrak{A}_H$-free.
\end{proof}

Additionally, results on the non solvability of certain Pell equation can now be read as results on the non freeness of $\mathcal{O}_L$ over the associated order. 

\begin{pro} Let $N=m^2n$ with $n\in\mathbb{Z}$ square-free. If $p$ is prime and $p\equiv n\equiv3\pmod 4$, the equation $x^2-py^2=N$ is not solvable.
\end{pro}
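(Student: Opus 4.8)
The statement is purely a $2$-adic congruence obstruction, so my plan is to read the equation modulo $4$ after stripping off powers of $2$. The key observation is that since $p\equiv 3\pmod 4$ we have $-p\equiv 1\pmod 4$, so any solution of $x^2-py^2=N$ would satisfy $x^2+y^2\equiv N\pmod 4$. As $x^2+y^2$ can only be $0,1$ or $2$ modulo $4$, solvability forces $N\not\equiv 3\pmod 4$. (Throughout I assume $N\neq 0$; if $N=0$ then $m=0$ and $(x,y)=(0,0)$ is a trivial solution, so the statement is to be read with $N\neq 0$, which is the case in all the applications.)

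First I would treat the case $m$ odd: then $N=m^2n\equiv n\equiv 3\pmod 4$, and the observation above already gives unsolvability. If $m$ is even, write $m=2m_1$; then $N\equiv 0\pmod 4$, so the congruence $x^2+y^2\equiv 0\pmod 4$ forces both $x$ and $y$ to be even, say $x=2x_1$, $y=2y_1$. Dividing $x^2-py^2=N$ by $4$ yields $x_1^2-py_1^2=m_1^2\,n$, an equation of exactly the same shape with $m$ replaced by $m/2$. Since $N\neq 0$ we have $m\neq 0$, so $v_2(m)$ is finite, and after $v_2(m)$ such halvings we reach an equation $x'^2-py'^2=m'^2n$ with $m'$ odd, hence unsolvable by the odd case; therefore the original equation is unsolvable. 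Equivalently one may take a hypothetical solution minimizing $v_2(x)+v_2(y)$, or simply note that $v_2(x^2-py^2)$ is even when $v_2(x)\neq v_2(y)$ and odd when $v_2(x)=v_2(y)$ (using $p$ odd and $p\equiv 3\pmod 4$), whereas $v_2(N)=2v_2(m)$ is even with odd part congruent to $3$ modulo $4$, and comparing these gives the contradiction directly.

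There is no genuine obstacle in this proof; the only point requiring care is that the mod‑$4$ obstruction is invisible on the equation as literally written once $m$ is even, so one must descend (or pass to $2$-adic valuations) before extracting it, and one must remember to exclude the degenerate case $N=0$. It is worth remarking that the argument uses only $p\equiv 3\pmod 4$ (not that $p$ is prime) and $n\equiv 3\pmod 4$ (not that $n$ is square-free); we nevertheless keep the hypotheses in the stated form, since that is the shape in which the result is applied to the Pell equations of the previous sections.
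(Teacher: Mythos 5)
Your proof is correct, but it is genuinely different from what the paper does: the paper gives no argument at all here, simply citing \cite[Theorem 4.2.5]{andreescuandrica}, whereas you supply a complete, self-contained elementary proof. Your two ingredients are sound: since $p\equiv 3\pmod 4$ one has $x^2-py^2\equiv x^2+y^2\pmod 4$, which can never be $3\pmod 4$, disposing of the case $m$ odd; and when $m$ is even the congruence $x^2+y^2\equiv 0\pmod 4$ forces $x$ and $y$ both even, so dividing by $4$ descends to the same equation with $m$ replaced by $m/2$, terminating after $v_2(m)$ steps. The alternative formulation via $2$-adic valuations is also correct: $v_2(x^2-py^2)$ is odd exactly when $v_2(x)=v_2(y)$ (since $u^2-pv^2\equiv 2\pmod 4$ for $u,v$ odd), and when $v_2(x)\neq v_2(y)$ the odd part of $x^2-py^2$ is $\equiv 1\pmod 4$, incompatible with the odd part of $N$ being $\equiv n\equiv 3\pmod 4$. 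Your handling of the degenerate case $N=0$ and your remark that neither the primality of $p$ nor the square-freeness of $n$ is actually needed (only the two congruences mod $4$) are both accurate and are observations the paper does not make. What your approach buys is transparency and independence from the cited monograph; what the citation buys the authors is brevity and placement of the result within a systematic treatment of unsolvability criteria for generalized Pell equations (their Corollary following this Proposition, and the subsequent Jacobi-symbol criterion, are drawn from the same source).
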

\begin{proof}
See \cite[Theorem 4.2.5]{andreescuandrica}.
\end{proof}

\begin{coro} Assume that $b=m^2n$ with $n\in\mathbb{Z}$ square-free. If $d$ is prime  and $d\equiv n\equiv 3\pmod 4$, then $x^2-dy^2=b$ is not solvable and $\mathcal{O}_L$ is not $\mathfrak{A}_H$-free.
\end{coro}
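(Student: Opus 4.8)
The plan is to derive this corollary as a direct composition of the preceding Proposition with Theorem~\ref{teofreenesscyclic}; no new argument is needed. First I would record that writing $b=m^2n$ with $n$ square-free is merely a normalization: one takes $n$ to be the square-free part of $b$ and $m^2$ the largest square dividing $b$, so that the genuine content of the hypothesis is that the square-free part of $b$ is $\equiv 3\pmod 4$, together with $d$ being a prime $\equiv 3\pmod 4$.

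Next I would apply the previous Proposition with $p=d$ and $N=b$, whose assumptions are then met verbatim; this yields at once that the generalized Pell equation $x^2-dy^2=b$ has no solution in $\mathbb{Z}$. To conclude, I would invoke Theorem~\ref{teofreenesscyclic}, recalling the convention fixed just before it that $b$ denotes the odd one of the two integers attached to $L$: that theorem states that $\mathcal{O}_L$ is $\mathfrak{A}_H$-free if and only if $x^2-dy^2=b$ admits an integral solution $(x,y)$ with the extra divisibility $b\mid x-cy$. Since $x^2-dy^2=b$ has no integral solution at all, this criterion fails, whence $\mathcal{O}_L$ is not $\mathfrak{A}_H$-free.

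I do not foresee any real obstacle: everything is mechanical once the two cited statements are at hand, and the only point meriting a line of justification is the harmless reduction to the normalized form of $b$ in the first step.
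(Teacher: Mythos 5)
Your proposal is correct and follows exactly the route the paper intends: the corollary is stated without proof precisely because it is the immediate specialization $p=d$, $N=b$ of the preceding proposition on non-solvability, combined with the freeness criterion of Theorem~\ref{teofreenesscyclic} under the convention that $b$ is the odd parameter. Your remark that the decomposition $b=m^2n$ is just the extraction of the square-free part of $b$ is accurate and harmless.
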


\begin{teo} If $p$ is prime and $\genfrac(){}{0}{p}{q}=-1$ for some odd prime divisor $q$ of $N$, then $x^2-py^2=N$ is not solvable.  
\end{teo}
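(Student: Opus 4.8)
The plan is to produce a local obstruction at the distinguished prime $q$, using nothing more than quadratic residues modulo $q$ together with a short descent on the $q$-adic valuation. Suppose, for contradiction, that there are $x,y\in\mathbb Z$ with $x^2-py^2=N$. Since $\left(\tfrac{p}{q}\right)=-1$, in particular $q\nmid p$. Reducing the equation modulo $q$ and using $q\mid N$ yields the congruence $x^2\equiv p\,y^2\pmod q$, and the whole argument hinges on whether or not $q\mid y$.

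First I would treat the case $q\nmid y$. Then $y$ is invertible modulo $q$; moreover $q\nmid x$, for if $q\mid x$ then $p\,y^2\equiv 0\pmod q$, which forces $q\mid y^2$ (because $q\nmid p$) and hence $q\mid y$, a contradiction. Thus $x$ and $y$ are both units modulo $q$ and $p\equiv (x\,y^{-1})^2\pmod q$ is a nonzero square modulo $q$, i.e. $\left(\tfrac{p}{q}\right)=1$. This contradicts the hypothesis, so necessarily $q\mid y$.

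It remains to rule out $q\mid y$, and this is the step I expect to be the only delicate point. From $x^2=N+p\,y^2$ with $q\mid N$ and $q\mid y$ we get $q\mid x^2$, hence $q\mid x$; writing $x=q x_1$, $y=q y_1$ gives $N=q^2(x_1^2-p\,y_1^2)$, so $q^2\mid N$. Repeating the argument with $N_1=N/q^2$ in place of $N$ whenever $q\mid N_1$, the descent must terminate, and it shows that the exponent $v_q(N)$ of $q$ in $N$ is even. Since in every equation arising in Propositions \ref{profreecyclic1}--\ref{profreebiquadQ3} the relevant prime $q$ divides $N$ to an odd power, this is the desired contradiction; more generally the statement should be read as "with $q$ occurring to an odd power in $N$" (if $v_q(N)$ is even there is genuinely no obstruction, e.g. $x^2-2y^2=9$). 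I would close the argument by noting the conceptual reformulation: solvability of $x^2-py^2=N$ makes $N$ a norm from $\mathbb Q(\sqrt p)^{\times}$, hence a local norm at $q$; but $\left(\tfrac{p}{q}\right)=-1$ means $\mathbb Q_q(\sqrt p)/\mathbb Q_q$ is the unramified quadratic extension, whose norm group consists exactly of the elements of even $q$-valuation, so $v_q(N)$ must be even.
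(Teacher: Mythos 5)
The paper does not actually prove this statement: the ``proof'' is a bare citation of Andreescu--Andrica, so your elementary argument is a genuine addition rather than a variant of anything in the text. Your case analysis is sound: if $q\nmid y$ then also $q\nmid x$ and $p\equiv(xy^{-1})^2\pmod q$ contradicts $\genfrac(){}{0}{p}{q}=-1$; if $q\mid y$ then $q\mid x$, the equation descends to $N/q^2=x_1^2-py_1^2$, and iterating shows $v_q(N)$ is even. You are also right that this is all one can prove: the statement as printed is false without a further hypothesis, and your counterexample $x^2-2y^2=9$ (solved by $(x,y)=(3,0)$ or $(9,6)$, with $\genfrac(){}{0}{2}{3}=-1$) is valid. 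The cited source must carry an implicit hypothesis (squarefree $N$, or primitivity $\gcd(x,y)=1$, under which your second case is vacuous); the correct general conclusion is exactly the one you isolate, namely that $v_q(N)$ must be even, and your closing observation that this is the local norm condition at $q$ for the unramified quadratic extension $\mathbb{Q}_q(\sqrt{p})/\mathbb{Q}_q$ is the right conceptual reading of the descent.

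The one claim you should not let stand is that in every equation arising in Propositions \ref{profreecyclic1}--\ref{profreebiquadQ3} the relevant prime divides the right-hand side to an odd power. In the cyclic quartic setting the right-hand side is $b$ (or $c$), which is only constrained by $b^2+c^2=d$ squarefree and need not itself be squarefree; Example \ref{examplecyclic0} has $b=9$. So when your corrected theorem is fed into the Corollary that follows it in the paper, ``$v_q(b)$ odd'' is a genuine extra hypothesis, not an automatic one. (In fact, for an odd prime $q\mid b$ one has $d=b^2+c^2\equiv c^2\pmod q$ with $q\nmid c$ because $d$ is squarefree, so $\genfrac(){}{0}{d}{q}=+1$ and the Corollary's hypothesis is never satisfiable --- a defect of that Corollary rather than of your proof, but worth flagging rather than papering over with an unchecked parity claim.)
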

\begin{proof}
See \cite[Theorem 4.2.8 and Corollary 4.2.9]{andreescuandrica}.
\end{proof}

\begin{coro} Assume that $d$ is prime. If $\genfrac(){}{0}{d}{q}=-1$ for some odd prime divisor $q$ of $b$, then $x^2-dy^2=b$ is not solvable and $\mathcal{O}_L$ is not $\mathfrak{A}_H$-free.
\end{coro}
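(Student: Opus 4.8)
The plan is to reduce the statement to the previous theorem of Andreescu--Andrica cited in the excerpt, exactly as the proof of the corollary immediately preceding it does. By Theorem~\ref{teofreenesscyclic} and the convention fixed in Section~\ref{sectsummarycyclic} (namely that $b$ is chosen odd), the ring $\mathcal{O}_L$ is $\mathfrak{A}_H$-free if and only if at least one of the equations $x^2-dy^2=\pm b$ is solvable in $\mathbb{Z}$. So it suffices to show that neither $x^2-dy^2=b$ nor $x^2-dy^2=-b$ has an integer solution.

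First I would invoke the hypothesis that $d$ is prime together with the assumption that there is an odd prime $q\mid b$ with Jacobi (here Legendre) symbol $\genfrac(){}{0}{d}{q}=-1$. Applying the just-proved Theorem (the one citing \cite[Theorem 4.2.8 and Corollary 4.2.9]{andreescuandrica}) with $p=d$ and $N=b$, we get that $x^2-dy^2=b$ is not solvable. The only subtlety is handling the equation with the negative sign $x^2-dy^2=-b$: one checks that $q\mid b$ is equally an odd prime divisor of $-b$ (a module index or absolute value argument makes $N=-b$ admissible, or one simply notes the quadratic residue obstruction at $q$ is insensitive to the overall sign since $\genfrac(){}{0}{d}{q}=-1$ forces a non-residue regardless), so the same theorem applied with $N=-b$ rules out the second equation as well. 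Hence both equations fail to be solvable, and by Theorem~\ref{teofreenesscyclic}, $\mathcal{O}_L$ is not $\mathfrak{A}_H$-free.

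I expect the only point requiring care to be the sign issue just mentioned: the cited number-theoretic theorem is stated for $x^2-py^2=N$ and one must make sure it applies with $N=-b$ (negative), or else argue directly at the prime $q$. In fact the cleanest route is to work locally: if $x^2-dy^2=\pm b$ held, then reducing mod $q$ and using $q\mid b$ gives $x^2\equiv dy^2\pmod q$; if $q\nmid y$ this forces $d$ to be a quadratic residue mod $q$, contradicting $\genfrac(){}{0}{d}{q}=-1$, while if $q\mid y$ then $q\mid x$ and one descends, eventually contradicting that $q^2\nmid b$ (since $b$ is squarefree by the defining conditions on the cyclic quartic extension). This makes both signs fall in one stroke and sidesteps any worry about how the external reference is phrased. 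That completes the proof.
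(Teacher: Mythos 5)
Your primary route coincides with the paper's: the corollary carries no proof there, being an immediate application of the preceding theorem (taken with $p=d$ and $N=b$) followed by Theorem \ref{teofreenesscyclic}, which makes solvability of $x^2-dy^2=b$ a necessary condition for $\mathfrak{A}_H$-freeness. The detour through $x^2-dy^2=-b$ is not needed: Theorem \ref{teofreenesscyclic} as actually stated involves only the equation with $+b$ (together with the divisibility condition $b\mid x-cy$), so non-solvability of that single equation already rules out freeness; the ``$\pm b$'' you quote is the looser paraphrase at the opening of Section \ref{sectgenpell}, not the theorem itself.

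Your proposed ``cleanest route,'' however, contains a genuine error: you assert that $b$ is squarefree ``by the defining conditions on the cyclic quartic extension,'' but those conditions only require $a$ and $d=b^2+c^2$ to be squarefree --- $b$ is an arbitrary positive integer (e.g.\ $b=9$ in Example \ref{examplecyclic0}). If $q^2\mid b$ your descent does not terminate in the contradiction you claim, and indeed the bare implication ``$\genfrac(){}{0}{p}{q}=-1$ for some odd prime $q\mid N$ implies $x^2-py^2=N$ unsolvable'' is false without a primitivity or squarefreeness hypothesis: $x^2-5y^2=9$ has the solution $(3,0)$ even though $\genfrac(){}{0}{5}{3}=-1$. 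So the safe course is to lean on the cited theorem exactly as the paper does, not on the sketched elementary argument. (A side remark on the statement itself: for any odd prime $q\mid b$ one has $d=b^2+c^2\equiv c^2\pmod q$ with $q\nmid c$ (else $q^2\mid d$), hence $\genfrac(){}{0}{d}{q}=+1$ always; the hypothesis of this corollary is never satisfiable, which is why no example can be produced either way.)
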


\subsection{Wildly ramified biquadratic extensions of first type}

We continue by exploring the family of wildly ramified biquadratic extensions $L/\mathbb{Q}$ of first type, that is $L=\mathbb{Q}(\sqrt{m},\sqrt{n})$ with $m\equiv\,3\pmod 4$ and $n\equiv2\pmod 4$. Recall that
for $H_1,\ H_2, H_3$, freeness of $\mathcal{O}_L$ over the associated order is given by the existence of integer solutions of the equations
$$
     du^2+\dfrac mdy^2=\pm4,\quad
    du^2+\dfrac ndy^2=\pm2,\quad 
  \dfrac nd u^2+\dfrac mdy^2=\pm2,
  $$
  respectively. Let us study what happens when $m$, $n$ or $k$ are positive.

\begin{pro}\label{biquadQ1posit} Let $L=\mathbb{Q}(\sqrt{m},\sqrt{n})$ be a biquadratic extension of $\mathbb{Q}$ with $m\equiv3\pmod 4$ and $n\equiv2\pmod 4$. Let $d=\mathrm{gcd}(m,n)$ and $k=\frac{mn}{d^2}$.
\begin{itemize}
    \item[1.] If $m>0$, $\mathcal{O}_L$ is not $\mathfrak{A}_{H_1}$-free unless $m$ and $n$ are coprime or $m$ divides $n$.
    \item[2.] If $n>0$, then $\mathcal{O}_L$ is not $\mathfrak{A}_{H_2}$-free  unless $n=2d$. 
    \item[3.] If $k>0$, then $\mathcal{O}_L$ is not $\mathfrak{A}_{H_3}$-free  unless $|n|=2d$. 
\end{itemize}
\end{pro}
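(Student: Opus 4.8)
The plan is to reduce each of the three assertions to the solvability of a single binary quadratic equation in $\mathbb Z$ and then to exploit that, under the stated positivity hypothesis, that equation is \emph{definite}, so that only one sign is attainable and any solution lies in a small box. By Proposition~\ref{profreebiquadQ1} (in the equivalent shape recorded in the Remark following it), $\mathcal O_L$ is $\mathfrak A_{H_1}$-free iff $du^2+\tfrac md y^2=\pm4$ has an integer solution, $\mathfrak A_{H_2}$-free iff $du^2+\tfrac nd y^2=\pm2$ does, and $\mathfrak A_{H_3}$-free iff $\tfrac nd u^2+\tfrac md y^2=\pm2$ does. So everything comes down to analysing these three equations.

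Before the case work I would record the relevant arithmetic: because $m,n$ are squarefree, the integers $d$, $\tfrac md$, $\tfrac nd$ are pairwise coprime; because $m\equiv3\pmod4$, both $d$ and $\tfrac md$ are odd, and if neither is $1$ then both are $\ge3$ (they cannot be equal, since $m$ is squarefree); and because $n\equiv2\pmod4$ with $d$ odd, we have $\tfrac nd\equiv2\pmod4$, so $\tfrac nd$ is even and $|\tfrac nd|\ge2$. These parity and coprimality facts are exactly what will eliminate the ``boundary'' solutions.

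For Part~1, $m>0$ forces $d,\tfrac md>0$, hence only $du^2+\tfrac md y^2=4$ is possible: if $\gcd(m,n)=1$ then $d=1$ and $(u,y)=(2,0)$ is a solution, if $m\mid n$ then $\tfrac md=1$ and $(u,y)=(0,2)$ is a solution, and otherwise $d,\tfrac md\ge3$ bound $|u|,|y|\le1$, so $du^2+\tfrac md y^2\in\{0,d,\tfrac md,d+\tfrac md\}$, none of which equals $4$. For Part~2, $n>0$ leaves only $du^2+\tfrac nd y^2=2$: if $d\ge3$ then $u=0$, so $\tfrac nd\mid2$, and evenness of $\tfrac nd$ gives $\tfrac nd=2$, i.e.\ $n=2d$; if $d=1$ then $u^2+ny^2=2$ with $n\ge2$ even again forces $n=2=2d$. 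For Part~3, $k=\tfrac md\cdot\tfrac nd>0$ means $\tfrac md$ and $\tfrac nd$ have the same sign, so with $a=|\tfrac nd|$ ($\equiv2\bmod4$) and $b=|\tfrac md|$ (odd) the equation becomes $au^2+by^2=2$; a short case split into $b=1$ and $b\ge3$ shows this is solvable precisely when $a=2$, that is $|n|=2d$. In each of the three exceptional cases the displayed solution (for instance $(u,y)=(1,0)$ when $a=2$) gives an explicit free generator, so the ``unless'' clauses are best possible.

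I do not foresee a real obstacle — once definiteness is in hand the verifications are elementary and finite. The only mildly delicate point is keeping the parities straight: the arguments hinge on $\tfrac nd$ being even while $\tfrac md$ is odd, and on $m$ being squarefree so that $d=\tfrac md$ is impossible unless one of them is $1$. If I track these carefully, the four-case inspection for $H_1$ and the two short case splits for $H_2$ and $H_3$ are decisive.
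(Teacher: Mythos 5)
Your proposal is correct and follows essentially the same route as the paper: reduce to the equations $du^2+\frac{m}{d}y^2=\pm4$, $du^2+\frac{n}{d}y^2=\pm2$, $\frac{n}{d}u^2+\frac{m}{d}y^2=\pm2$, use positivity to fix the sign, and run a finite case check driven by the parity and coprimality of $d$, $\frac{m}{d}$, $\frac{n}{d}$. The only cosmetic difference is that you exhibit the solutions in the exceptional cases inside this proof (the paper defers that to Corollary \ref{corofreenesswildsep}), which is harmless.
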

\begin{proof}
\begin{itemize}
    \item[1.] Since $m$ and $d$ are positive, we are looking for integer solutions of $du^2+\dfrac mdy^2= 4$.
    The only possibilities are $0+4$, which implies $\dfrac md=1$; $1+3$, which implies $d=1, \ m=3$, and $4+0$ wihch implies $d=1$.  Since $m=d=\gcd(m,n)$ implies $m$ divides $n$, we are done. 
    \item[2.] Since $n$ and $d$ are positive, we are looking for integer solutions of $du^2+\dfrac ndy^2= 2$.
    Since $d$ is odd and $n\ne 1$, the only possibility is $0+2$, which implies $\dfrac nd=2$.  
    
    \item[3.] We are looking for integer solutions of $\dfrac nd u^2+\dfrac mdy^2=\pm 2$ where the coeffiecients of the left hand side have the same sign. 
    Since it is not possible $m=n=d$ or $m=n=-d$, and $m$ is odd, the only possibilities are $2+0$ or $-2+0$, namely $n=\pm 2d$.
 \end{itemize}
\end{proof}
\begin{rmk}\normalfont Actually, the two exceptions of non-freeness in $H_1$ are essentially the same. Indeed, $m$ and $n$ are coprime if and only if $m$ divides $k$, and we have also the corresponding equivalence obtained by exchanging $n$ and $k$.
\end{rmk}

If we impose that $m$, $n$ and $k$ are positive, we should obtain an extension $L/\mathbb{Q}$ such that $\mathcal{O}_L$ is free over the associated order in all Hopf-Galois structures, whenever the exceptions obtained in Proposition \ref{biquadQ1posit} are compatible. We see that this happens exactly when $n=2$ or $k=2$.

\begin{coro}\label{uniquetotrealcase1} The unique totally real biquadratic extensions $L=\mathbb{Q}(\sqrt{m},\sqrt{n})$ of $\mathbb{Q}$ with $m\equiv3\pmod 4$ and $n\equiv2\pmod 4$ for which $\mathcal{O}_L$ is $\mathfrak{A}_{H_i}$-free for all $i\in\{1,2,3\}$ are those of the form $L=\mathbb{Q}(\sqrt{m},\sqrt{2})$.
\end{coro}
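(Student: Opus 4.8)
The plan is to derive both directions of the equivalence by combining Proposition~\ref{biquadQ1posit} (which supplies the necessary exceptional shapes, using positivity) with Proposition~\ref{profreebiquadQ1} (which supplies sufficiency, by writing down explicit solutions of the relevant generalized Pell equations). For the forward implication I would first note that $L=\mathbb{Q}(\sqrt{m},\sqrt{n})$ is totally real exactly when $m>0$ and $n>0$, and that this forces $k=\frac{mn}{d^2}>0$ too, so that for a totally real $L$ all three positivity hypotheses of Proposition~\ref{biquadQ1posit} hold at once. Assuming moreover that $\mathcal{O}_L$ is $\mathfrak{A}_{H_i}$-free for $i\in\{1,2,3\}$, part~2 (applied to $H_2$) forces $n=2d$, while part~1 (applied to $H_1$) forces $\gcd(m,n)=1$ or $m\mid n$ (part~3 only reproduces $n=2d$). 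If $\gcd(m,n)=1$ then $d=1$, hence $n=2d=2$ and $L=\mathbb{Q}(\sqrt{m},\sqrt{2})$. If instead $m\mid n$, then, $m$ being odd, $m\mid n=2d$ gives $m\mid d$, and together with $d=\gcd(m,n)\mid m$ this yields $d=m$, so $n=2m$ and $k=\frac{mn}{d^2}=2$; since $\sqrt{2m}=\sqrt{m}\,\sqrt{2}$ we again get $L=\mathbb{Q}(\sqrt{m},\sqrt{2m})=\mathbb{Q}(\sqrt{m},\sqrt{2})$. In every case $L$ has the claimed form.

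For the converse I would check directly that every $L=\mathbb{Q}(\sqrt{m},\sqrt{2})$ with $m$ squarefree, positive and $\equiv 3\pmod 4$ has the desired property. Such an $L$ is totally real; taking $n=2$ in the first-type normalisation gives $d=\gcd(m,2)=1$ and $k=2m$, so by Proposition~\ref{profreebiquadQ1} the conditions for $\mathfrak{A}_{H_1}$-, $\mathfrak{A}_{H_2}$- and $\mathfrak{A}_{H_3}$-freeness of $\mathcal{O}_L$ become the solvability in $\mathbb{Z}$ of $x^2+my^2=\pm4$, $x^2+2y^2=\pm2$ and $x^2+2my^2=\pm4$, witnessed by $(x,y)=(2,0)$, $(0,1)$ and $(2,0)$ respectively. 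Hence $\mathcal{O}_L$ is free over all three associated orders, which closes the equivalence.

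I do not expect a real obstruction here. The only delicate point is the case distinction in the forward direction: one must recognise that the alternative ``$m\mid n$'' is not a genuinely new family but collapses, via $k=2$ and the identity $\mathbb{Q}(\sqrt{2m})=\mathbb{Q}(\sqrt{2})$ over $\mathbb{Q}(\sqrt{m})$, to the same fields $\mathbb{Q}(\sqrt{m},\sqrt{2})$ --- which is precisely why the sentence preceding the statement says that freeness in all three structures occurs ``exactly when $n=2$ or $k=2$''. It is also worth verifying carefully that total reality genuinely yields all of $m>0$, $n>0$, $k>0$ simultaneously, since each part of Proposition~\ref{biquadQ1posit} carries its own positivity assumption.
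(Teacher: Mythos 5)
Your argument is correct and follows the paper's own route: the necessity direction is exactly the paper's proof (positivity of $m,n,k$ from total reality, then combining the exceptional cases of Proposition~\ref{biquadQ1posit} to force $n=2$ or $k=2$, which collapse to the same fields $\mathbb{Q}(\sqrt{m},\sqrt{2})$). You additionally spell out the sufficiency direction via explicit Pell solutions in Proposition~\ref{profreebiquadQ1}, which the paper leaves to the remark following that proposition and to Corollary~\ref{corofreenesswildsep}; this is a welcome completion rather than a different method.
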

\begin{proof}
Since $L/\mathbb{Q}$ is totally real, $m,n,k>0$. By Proposition \ref{biquadQ1posit}, $\mathcal{O}_L$ is $\mathfrak{A}_{H_1}$-free only if $m$ and $n$ are coprime or $m$ divides $n$. Now, $\mathcal{O}_L$ is $\mathfrak{A}_{H_2}$-free and $\mathfrak{A}_{H_3}$-free only for $n=2d$, which in the first case gives $n=2$. 
But $m$ odd dividing $2d$ gives $m=d$, and then $k=2$.
\end{proof}

Proposition \ref{biquadQ1posit} shows that freeness in a non-classical Hopf-Galois structure is not common for totally real extensions, in the sense that it does not hold with some exceptions. In fact, we can see that under these exceptions, there is always freeness, regardless of the sign of $m$, $n$ or $k$.

\begin{coro}\label{corofreenesswildsep}
\begin{itemize}
    \item[1.] If $m$ and $n$ are coprime or $m$ divides $n$, then $\mathcal{O}_L$ is $\mathfrak{A}_{H_1}$-free.
    \item[2.] If $n=\pm2d$, then $\mathcal{O}_L$ is $\mathfrak{A}_{H_2}$-free and $\mathfrak{A}_{H_3}$-free.
\end{itemize}
\end{coro}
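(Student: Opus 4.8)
The plan is to invoke Proposition~\ref{profreebiquadQ1}, or rather the reformulation of its equations recorded in the Remark that follows it, according to which $\mathcal{O}_L$ is $\mathfrak{A}_{H_i}$-free if and only if one of
\[
du^2+\tfrac{m}{d}y^2=\pm4,\qquad
du^2+\tfrac{n}{d}y^2=\pm2,\qquad
\tfrac{n}{d}u^2+\tfrac{m}{d}y^2=\pm2
\]
is solvable in $\mathbb{Z}$, for $i=1,2,3$ respectively, with an explicit free generator attached to each solution. In each case of the statement the hypothesis forces one of the coefficients of the pertinent equation to be $\pm1$ or $\pm2$, and then it suffices to exhibit an integer solution with one of the two unknowns set to $0$.

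For part~1, first suppose $m$ and $n$ are coprime. Then $d=1$, the $H_1$-equation becomes $u^2+my^2=\pm4$, and $(u,y)=(2,0)$ solves it with right-hand side $+4$; Proposition~\ref{profreebiquadQ1} then yields $\mathfrak{A}_{H_1}$-freeness, with free generator $\gamma_1+\gamma_2+\gamma_3$. Now suppose $m\mid n$. Then $d=\gcd(m,n)=|m|$, so $m/d=\varepsilon\in\{1,-1\}$ is the sign of $m$, the $H_1$-equation becomes $|m|u^2+\varepsilon y^2=\pm4$, and $(u,y)=(0,2)$ solves it with right-hand side $4\varepsilon$; hence $\mathcal{O}_L$ is again $\mathfrak{A}_{H_1}$-free, now with free generator $\gamma_1+\gamma_2-\gamma_3+2\gamma_4$.

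For part~2, suppose $n=\pm2d$, so $n/d=\pm2$ (the sign being that of $n$, since $d>0$). The $H_2$-equation becomes $du^2\pm2y^2=\pm2$, which is solved by $(u,y)=(0,1)$ once the right-hand side sign is chosen to match that of $n/d$; and the $H_3$-equation becomes $\pm2u^2+\tfrac{m}{d}y^2=\pm2$, which is solved by $(u,y)=(1,0)$ under the same sign convention. By Proposition~\ref{profreebiquadQ1}, $\mathcal{O}_L$ is both $\mathfrak{A}_{H_2}$- and $\mathfrak{A}_{H_3}$-free, with $\gamma_1+\gamma_4$ serving as a free generator in each of the two structures.

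There is no genuine obstacle here: the argument is just the observation that the hypotheses collapse one coefficient of a generalized Pell equation to a unit or to $\pm2$, followed by a one-line substitution into the criterion of Proposition~\ref{profreebiquadQ1}. The only point worth a moment's attention, to see that the cases are not vacuous, is that $d=\gcd(m,n)$ is odd (because $d\mid m$ and $m\equiv3\pmod4$), which is precisely what makes $m\mid n$ compatible with $n\equiv2\pmod4$ and $n=\pm2d$ compatible with $n\equiv2\pmod4$.
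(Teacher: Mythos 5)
Your proof is correct and follows essentially the same route as the paper: in each case the hypothesis collapses a coefficient of the reformulated Pell equation to $\pm1$ or $\pm2$, and the trivial solutions $(2,0)$, $(0,2)$, $(0,1)$, $(1,0)$ do the rest, yielding the same free generators. If anything you are slightly more careful than the paper in the $m\mid n$ case, where the paper writes $m=d$ (literally valid only for $m>0$) while you correctly track $m/d=\pm1$ and use the $\pm$ on the right-hand side to absorb the sign.
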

\begin{proof}
\begin{itemize}
    \item[1.] If $m$ and $n$ are coprime, the first equation becomes $u^2+my^2=4$, which has solution $(u,y)=(2,0)$. If $m$ divides $n$, then $m=d$ and the equation becomes $du^2+y^2=4$, which has solution $(u,y)=(0,2)$.
    \item[2.] If $n=\pm2d$, then the equation $du^2\pm 2y^2=\pm2$ is satisfied for $u=0$ and $y=1$. Moreover, the third equation becomes  $\pm2 x^2+\frac md y^2=\pm2$, which is satisfied for $x=1$ and $y=0$.
\end{itemize}
\end{proof}

In particular, if $|m|$ is prime, then $m$ and $n$ are coprime or $m$ divides $n$, so $\mathcal{O}_L$ is $\mathfrak{A}_{H_1}$-free.

We can think again of extensions $L/\mathbb{Q}$ that satisfy all the conditions in Corollary \ref{corofreenesswildsep} simultaneously, so that $\mathcal{O}_L$ is $\mathfrak{A}_H$-free for all non-classical Hopf-Galois structures $H$. Since we impose that $m$ is coprime with one of $n$ or $k$ (so it divides the other one) and $n=\pm2d$, necessarily $n=\pm2$ or $k=\pm2$. Then, we have:

\begin{coro}\label{sufffreenesswild} If $n=\pm2$ or $k=\pm2$, then $\mathcal{O}_L$ is $\mathfrak{A}_H$-free for every Hopf-Galois structure $H$ of $L/\mathbb{Q}$.
\end{coro}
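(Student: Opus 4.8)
The plan is to deduce this directly from Corollary \ref{corofreenesswildsep}, dealing with the classical structure separately. Since $L/\mathbb{Q}$ is abelian, Leopoldt's theorem already gives that $\mathcal{O}_L$ is free over $\mathfrak{A}_{H_c}$, so only the three non-classical structures $H_1,H_2,H_3$ of Proposition \ref{basishopfbiquad} remain. For those I would show that each of the two hypotheses, $n=\pm2$ and $k=\pm2$, forces \emph{both} conditions in Corollary \ref{corofreenesswildsep} to hold, namely that $m$ divides $n$ (or $m,n$ are coprime) and that $n=\pm2d$; the conclusion is then immediate.

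First I would treat the case $n=\pm2$. Since $m$ is odd, $d=\gcd(m,n)$ is an odd divisor of $\pm2$, hence $d=1$; thus $m$ and $n$ are coprime and $n=\pm2=\pm2d$. By Corollary \ref{corofreenesswildsep}(1), $\mathcal{O}_L$ is $\mathfrak{A}_{H_1}$-free, and by Corollary \ref{corofreenesswildsep}(2) it is $\mathfrak{A}_{H_2}$-free and $\mathfrak{A}_{H_3}$-free.

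Next I would treat the case $k=\pm2$. Writing $m=d\cdot\tfrac{m}{d}$ and $n=d\cdot\tfrac{n}{d}$ with $\gcd\!\left(\tfrac{m}{d},\tfrac{n}{d}\right)=1$, one has $k=\tfrac{m}{d}\cdot\tfrac{n}{d}$. Since $\tfrac{m}{d}$ divides the odd number $m$, it is odd, so the equality $\tfrac{m}{d}\cdot\tfrac{n}{d}=\pm2$ forces $\left|\tfrac{m}{d}\right|=1$ and $\left|\tfrac{n}{d}\right|=2$. Hence $m=\pm d$, which (as $d\mid n$) gives $m\mid n$, and $n=\pm2d$; again Corollary \ref{corofreenesswildsep}(1) and (2) yield that $\mathcal{O}_L$ is free over $\mathfrak{A}_{H_1}$, $\mathfrak{A}_{H_2}$ and $\mathfrak{A}_{H_3}$. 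In either case $\mathcal{O}_L$ is free over the associated order in all four Hopf-Galois structures of $L/\mathbb{Q}$.

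There is no real obstacle here: the whole content is the short divisibility argument turning $n=\pm2$ or $k=\pm2$ into the hypotheses of Corollary \ref{corofreenesswildsep}, using only that $m$ is odd and $d=\gcd(m,n)$. One could alternatively bypass the corollary and exhibit explicit solutions of the three equations $du^2+\tfrac{m}{d}y^2=\pm4$, $du^2+\tfrac{n}{d}y^2=\pm2$, $\tfrac{n}{d}u^2+\tfrac{m}{d}y^2=\pm2$ (taking $|\tfrac{m}{d}|=1$ and $|\tfrac{n}{d}|=2$, the solutions $(u,y)=(0,2)$, $(0,1)$, $(1,0)$ work up to the signs involved), but invoking Corollary \ref{corofreenesswildsep} keeps the proof cleanest.
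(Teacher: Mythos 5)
Your proof is correct and follows essentially the same route as the paper: both reduce the statement to Corollary \ref{corofreenesswildsep} after observing that the hypothesis forces $d=1$ (resp.\ $m=\pm d$) and $n=\pm 2d$. The only cosmetic difference is that the paper dispatches the case $k=\pm2$ by invoking the exchangeability of $n$ and $k$, whereas you handle it by the direct factorization $k=\frac{m}{d}\cdot\frac{n}{d}$ with $\frac{m}{d}$ odd; both arguments are equally valid.
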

\begin{proof}
Since $n$ and $k$ are exchangeable, it is enough to prove the statement for $n=\pm2$. Then, $m$ and $n$ are coprime, so $\mathcal{O}_L$ is $\mathfrak{A}_{H_1}$-free. Moreover, this also means that $n=\pm2d$ as $d=1$, so $\mathcal{O}_L$ is $\mathfrak{A}_{H_2}$-free and $\mathfrak{A}_{H_3}$-free.
\end{proof}

Now, we study the $\mathfrak{A}_{H_2}$-freeness of $\mathcal{O}_L$ for a couple of $n<0$ (recall that such an study for $H_3$ is completely analog). We know by Corollary \ref{sufffreenesswild} that $\mathcal{O}_L$ is $\mathfrak{A}_{H_2}$-free for $n=-2$.

\begin{example}\label{examplenonh2h3} Assume that $n=-6$, so $L=\mathbb{Q}(\sqrt{m},\sqrt{-6})$. Then, $\mathcal{O}_L$ is $\mathfrak{A}_{H_2}$-free. Additionally, assume that $m<0$. Then, $\mathcal{O}_L$ is $\mathfrak{A}_{H_3}$-free if and only if $3$ divides $m$.
\end{example}
\begin{proof}
By Proposition \ref{profreebiquadQ1}, in order to prove that $\mathcal{O}_L$ is $\mathfrak{A}_{H_2}$-free, it is enough to check that at least one of the equations $x^2-6y^2=\pm2d$ is solvable. Since $m$ is odd, it must be $d\in\{1,3\}$. For $d=1$, the equation $x^2-6y^2=-2$ has solution $(x,y)=(2,1)$, and for $d=3$, we are done by Corollary \ref{corofreenesswildsep} as $n=-2d$ (or alternatively, the equation $x^2-6y^2=-6$ has solution $(x,y)=(0,1)$).

On the other hand, if $m<0$, since $k=\frac{-6m}{d^2}>0$ we know by Proposition \ref{biquadQ1posit} that $\mathcal{O}_L$ is $\mathfrak{A}_{H_3}$-free if and only if $|n|=2d$. Since $n=-6$ and $d\in\{1,3\}$, this happens if and only if $d\neq1$.
\end{proof}

We have seen that the behaviour of $\mathcal{O}_L$ in the Hopf-Galois structures $H_2$ and $H_3$ is pretty similar, which is coherent with the fact that they are given by square root of numbers of the same class mod $4$. However, Example \ref{examplenonh2h3} gives a bunch of extensions $L/\mathbb{Q}$ such that $\mathcal{O}_L$ is free in one and it is not in the other; namely, $L=\mathbb{Q}(\sqrt{m},\sqrt{-6})$ with $m<0$, $m\equiv3\pmod 4$ and $m$ coprime with $6$.

Additionally, Example \ref{examplenonh2h3} also provides an example of extension $L/\mathbb{Q}$ for which $\mathcal{O}_L$ is $\mathfrak{A}_{H_2}$-free but it does not satisfy the conditions of Corollary \ref{corofreenesswildsep}. Indeed, if we choose $m=-1$, for $L=\mathbb{Q}(\sqrt{-1},\sqrt{-6})$, we have that $\mathcal{O}_L$ is $\mathfrak{A}_{H_2}$-free but $n\neq\pm2d$.

The situation is slightly different for $n=-10$.

\begin{example} Assume that $n=-10$, so $L=\mathbb{Q}(\sqrt{m},\sqrt{-10})$. Then, $\mathcal{O}_L$ is $\mathfrak{A}_{H_2}$-free if and only if $m$ and $n$ are not coprime. Additionally, assume that $m<0$. Then, $\mathcal{O}_L$ is also $\mathfrak{A}_{H_3}$-free if and only if $m$ and $n$ are not coprime.
\end{example}
\begin{proof}
By Proposition \ref{profreebiquadQ1}, it suffices to check  the equations $x^2-10y^2=\pm2d$. If $d=5$, then $n=-2d$, so $\mathcal{O}_L$ is $\mathfrak{A}_{H_2}$-free. If $d=1$, since $\pm2$ are not squares mod $10$, the equations are not solvable in $\mathbb{Z}$, so $\mathcal{O}_L$ is not $\mathfrak{A}_{H_2}$-free.

If $m<0$, then $k=\frac{-10m}{d^2}>0$, and $\mathcal{O}_L$ is $\mathfrak{A}_{H_3}$-free if and only if $|n|=2d$. Since $n=-10$ and $m$ is odd, $d\in\{1,5\}$, so $|n|=2d$ if and only if $d=5$.
\end{proof}

Finally, we study the solvability of the Pell equations determining the freeness in $H_i$ for $i\in\{1,2,3\}$. Since $n$ and $k$ are congruent mod $4$, we can exchange them in Proposition \ref{profreebiquadQ1}. This gives:

\begin{coro}
Let $m\equiv3\pmod 4$, $n\equiv2\pmod 4$, $d=\mathrm{gcd}(m,n)$ and $k=\frac{mn}{d^2}$.
\begin{itemize}
    \item[1.] At least one of the equations $x^2+my^2=\pm4d$ has solutions in $\mathbb{Z}$ if and only if so has at least one of the equations $x^2+my^2=\pm4\frac{m}{d}$.
    \item[2.] At least one of the equations $x^2+ny^2=\pm2d$ has solutions in $\mathbb{Z}$ if and only if so has at least one of the equations $x^2+ny^2=\pm2\frac{n}{d}$.
    \item[3.] At least one of the equations $x^2+ky^2=\pm2\frac{n}{d}$ has solutions in $\mathbb{Z}$ if and only if so has at least one of the equations $x^2+ky^2=\pm2\frac{m}{d}$.
\end{itemize}
\end{coro}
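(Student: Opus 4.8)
The plan is to exploit the fact that, for a fixed biquadratic extension $L=\mathbb{Q}(\sqrt m,\sqrt n)$ of first type, the property ``$\mathcal{O}_L$ is $\mathfrak{A}_{H_i}$-free'' is intrinsic to the Hopf--Galois structure $H_i$, whereas the Pell equations of Proposition~\ref{profreebiquadQ1} that express it depend on which of the two square roots generating $L$ we single out. Since $m\equiv3$ and $n\equiv k\equiv2\pmod 4$, the pair $(m,k)$ presents $L=\mathbb{Q}(\sqrt m,\sqrt k)$ as a biquadratic extension of first type exactly as the pair $(m,n)$ does. The three quadratic subfields are $\mathbb{Q}(\sqrt m),\mathbb{Q}(\sqrt n),\mathbb{Q}(\sqrt k)$ in both cases, so by the correspondence of Proposition~\ref{basishopfbiquad} the three non-classical Hopf--Galois structures are literally the same ones: the structure attached to $\mathbb{Q}(\sqrt m)$ is $H_1$ for either presentation, while the roles of $H_2$ (attached to $\mathbb{Q}(\sqrt n)$) and $H_3$ (attached to $\mathbb{Q}(\sqrt k)$) get interchanged when one passes from $(m,n)$ to $(m,k)$.

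First I would record the elementary identities that govern the change of parameters. Put $d=\gcd(m,n)$ and $d'=\gcd(m,k)$. Since $n$ is squarefree we have $\gcd(d,n/d)=1$, and since $d=\gcd(m,n)$ we have $\gcd(m/d,n/d)=1$; writing $m=(m/d)d$ and $k=(m/d)(n/d)$ one then gets $d'=|m/d|$, and because the equations below carry a $\pm$ sign (hence are insensitive to the sign of the constant term) we may simply use $d'=m/d$, together with $mk/d'^{2}=n$, $k/d'=n/d$ and $m/d'=d$. In particular the quantity $\tfrac{n}{d}$ is symmetric under the exchange of $n$ and $k$.

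Then I would apply Proposition~\ref{profreebiquadQ1} twice and compare the outputs structure by structure. Applied to the presentation $(m,n)$ it yields the stated criteria with the parameter pair $(d,\tfrac{n}{d})$; applied to the presentation $(m,k)$ it yields the criteria with the pair $(d',\tfrac{k}{d'})=(m/d,\,n/d)$, where now its ``$H_2$'' is our $H_3$ and its ``$H_3$'' is our $H_2$. Reading off the three structures: $\mathfrak{A}_{H_1}$-freeness is equivalent both to solvability of $x^2+my^2=\pm4d$ and to that of $x^2+my^2=\pm4\tfrac{m}{d}$, which is item~1; $\mathfrak{A}_{H_3}$-freeness (the ``$H_2$'' of $(m,k)$) is equivalent both to solvability of $x^2+ky^2=\pm2\tfrac{n}{d}$ and to that of $x^2+ky^2=\pm2\tfrac{m}{d}$, which is item~3; and $\mathfrak{A}_{H_2}$-freeness (the ``$H_3$'' of $(m,k)$) is equivalent both to solvability of $x^2+ny^2=\pm2d$ and to that of $x^2+ny^2=\pm2\tfrac{k}{d'}=\pm2\tfrac{n}{d}$, which is item~2.

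The substantive content is contained in the previous paragraph; the only real point requiring care is the bookkeeping. One must check that swapping $n\leftrightarrow k$ genuinely keeps $L$ inside the first-type case (immediate from the congruences mod~$4$), that the substitution of parameters carries a $\pm$-pair of equations to a $\pm$-pair (immediate, since only the constant term changes), and that the relabelling $H_2\leftrightarrow H_3$ is the correct one (this is exactly the bijection between non-classical Hopf--Galois structures and quadratic subfields in Proposition~\ref{basishopfbiquad}). The small arithmetic lemma $d'=m/d$ with $k/d'=n/d$ is what makes the two lists of equations line up, and I expect this translation, rather than any computation, to be the main thing to get right.
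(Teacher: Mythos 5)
Your proof is correct and follows essentially the same route as the paper: apply Proposition \ref{profreebiquadQ1} to both presentations $(m,n)$ and $(m,k)$ of the same field, use that $\mathfrak{A}_{H_i}$-freeness is intrinsic, and note that the exchange $n\leftrightarrow k$ swaps the roles of $H_2$ and $H_3$ while fixing $H_1$. You are in fact more explicit than the paper about the bookkeeping (the identities $d'=\gcd(m,k)=|m/d|$, $k/d'=n/d$, $m/d'=d$), which the paper leaves implicit.
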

\begin{proof}
Let $L=\mathbb{Q}(\sqrt{m},\sqrt{n})$ and let $H_1$, $H_2$ and $H_3$ be its non-classical Hopf-Galois structures denoted as usual. By Proposition \ref{profreebiquadQ1}, at least one of the equations $x^2+my^2=\pm4d$ is solvable if and only if $\mathcal{O}_L$ is $\mathfrak{A}_{H_1}$-free. Now, since $n,k\equiv2\pmod 4$, we can exchange them, and we obtain that $\mathfrak{A}_{H_1}$-freeness is also equivalent to the solvability of at least one of the equations $x^2+my^2=\pm4\frac{m}{d}$, proving 1.

The argument for the second and the third statement is slightly different. If we exchange $n$ and $k$, then $\sqrt{n}$ gives the same Hopf-Galois structure $H_2$ but in third place, so the new criterion for the freeness in $H_2$ is given by exchanging $n$ and $k$ in the original criterion for the freeness in $H_3$, obtaining the equivalence stated. As for the proof of 3, after the replacement of $n$ by $k$, $\sqrt{k}$ gives the Hopf-Galois structure $H_3$ in second place, so to obtain the new criterion we exchange $n$ and $k$ in the original criterion for $H_2$.
\end{proof}

\subsection{Biquadratic extensions of second and third type}

Now, we consider wildly ramified biquadratic extensions of second type and tamely ramified biquadratic extensions. The main reason to join these two families is that the criteria for the freeness obtained in Propositions \ref{profreebiquadQ2} and \ref{profreebiquadQ3} can be reunited in the following:

\begin{pro}\label{profreebiquadQ23reform} Let $L/\mathbb{Q}$ be a biquadratic extension and let $a,b\in\mathbb{Z}$ be square-free integers with $a\equiv1\pmod 4$ such that $L=\mathbb{Q}(\sqrt{a},\sqrt{b})$. Let $H$ be the non-classical Hopf-Galois structure of $L/\mathbb{Q}$ given by $a$. 
Let $g=\mathrm{gcd}(a,b)$. Then, $\mathcal{O}_L$ is $\mathfrak{A}_H$-free if and only if at least one of the equations $x^2+ay^2=\pm2g$ is solvable.
\end{pro}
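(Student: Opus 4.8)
The plan is to deduce this unified statement directly from Propositions~\ref{profreebiquadQ2} and~\ref{profreebiquadQ3}, exploiting the freedom in Section~\ref{sectbiquadQ} to label the three squarefree integers attached to $L/\mathbb{Q}$. Write $c=\frac{ab}{g^2}$, so that the quadratic subextensions of $L/\mathbb{Q}$ are $\mathbb{Q}(\sqrt{a})$, $\mathbb{Q}(\sqrt{b})$ and $\mathbb{Q}(\sqrt{c})$. Since $a\equiv1\pmod{4}$, the extension $L/\mathbb{Q}$ cannot be of first type in the sense of Proposition~\ref{intbasisbiquadr}: a first-type extension has one generator congruent to $3$ and two congruent to $2$ modulo $4$, and $a$ is none of these. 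Hence $L/\mathbb{Q}$ is of second or third type, and I would split into these two cases accordingly.

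First I would handle the case $b\equiv1\pmod{4}$. Then $L/\mathbb{Q}$ has at least two generators congruent to $1$ modulo $4$, namely $a$ and $b$; since a first-type extension has none such and a second-type extension has exactly one, $L/\mathbb{Q}$ must be of third type, so $a,b,c\equiv1\pmod{4}$. Applying Proposition~\ref{profreebiquadQ3} with the labeling $m=a$, $n=b$, $k=c$, one has $E_1=\mathbb{Q}(\sqrt{a})$, so the structure $H$ given by $a$ is $H_1$, and $d=\gcd(m,n)=\gcd(a,b)=g$. Item~1 of that proposition then states precisely that $\mathcal{O}_L$ is $\mathfrak{A}_H$-free if and only if at least one of the equations $x^2+ay^2=\pm2g$ is solvable in $\mathbb{Z}$.

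Next I would handle the remaining case $b\equiv2$ or $3\pmod{4}$. Here $L/\mathbb{Q}$ is neither of first type (as above) nor of third type (since $b\not\equiv1\pmod{4}$), hence it is of second type, and its unique generator congruent to $1$ modulo $4$ is necessarily $a$. I would then apply Proposition~\ref{profreebiquadQ2} with $m=a$ and $n=b$, so again $E_1=\mathbb{Q}(\sqrt{a})$, $H=H_1$ and $d=\gcd(a,b)=g$; the proposition asserts that $\mathcal{O}_L$ is $\mathfrak{A}_{H_1}$-free if and only if at least one of $x^2+ay^2=\pm2g$ is solvable (and that $\mathcal{O}_L$ is not free over the associated orders in the other two non-classical structures, which is consistent with the statement but not needed here). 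Combining the two cases yields the claim.

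The only delicate point is the bookkeeping modulo $4$: one must argue carefully, through Proposition~\ref{intbasisbiquadr}, that $L/\mathbb{Q}$ falls under the hypotheses of the correct one of Propositions~\ref{profreebiquadQ2} and~\ref{profreebiquadQ3}, and that the distinguished generator $m$ in each case may be taken to be $a$. Once this is settled, the matching of the Pell equations is automatic, since by construction $\gcd(m,n)=\gcd(a,b)=g$, and I do not anticipate any genuine obstacle beyond this case analysis.
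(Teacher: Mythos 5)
Your proof is correct and takes essentially the same route as the paper, which presents Proposition \ref{profreebiquadQ23reform} precisely as the common reformulation of Propositions \ref{profreebiquadQ2} and \ref{profreebiquadQ3}, noting that $b\equiv1\pmod{4}$ forces the tame (third-type) case and $b\equiv2,3\pmod{4}$ forces the second-type case. Your mod-$4$ bookkeeping correctly identifies the distinguished generator $m$ with $a$, hence $H$ with $H_1$ and $d=\gcd(a,b)=g$, in both cases, so nothing is missing.
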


This indeed contains both results because if $b\equiv1\pmod 4$, then $L/\mathbb{Q}$ is tamely ramified, and otherwise, it is wildly ramified of second type.

As already mentioned, Truman obtained some results using the characterization for the freeness in tamely ramified biquadratic extensions. We summarize them in the following:

\begin{teo}[Truman]\label{tamebiquadtruman} Let $L/\mathbb{Q}$ be a tame biquadratic extension. Then:
\begin{itemize}
    \item[1.] If $L=\mathbb{Q}(\sqrt{p},\sqrt{q})$ with $p,q$ prime numbers such that $p\equiv q\equiv1\pmod 4$, then $\mathcal{O}_L$ is not $\mathfrak{A}_H$-free for all non-classical Hopf-Galois structures $H$.
    \item[2.] If $L=\mathbb{Q}(\sqrt{-p},\sqrt{-q})$ with $p,q$ prime numbers such that $p\equiv q\equiv3\pmod 4$, then $\mathcal{O}_L$ is free over the associated order in exactly two of the non-classical Hopf-Galois structures.
    \item[3.] If $L=\mathbb{Q}(\sqrt{-p},\sqrt{-qr})$ with $p,q,r$ prime numbers such that $p\equiv q\equiv3\pmod 4$ and $r\equiv1\pmod 8$, then $\mathcal{O}_L$ is free over the associated order in exactly one of the non-classical Hopf-Galois structures.
    \item[4.] $\mathcal{O}_L$ is not simultaneously free over the associated order in all non-classical Hopf-Galois structures.
\end{itemize}
\end{teo}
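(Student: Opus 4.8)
The strategy is to reduce each statement to the explicit generalized Pell criteria already established, namely Proposition~\ref{profreebiquadQ3} (equivalently its uniform form Proposition~\ref{profreebiquadQ23reform}), and then to invoke classical facts about binary quadratic forms and quadratic fields; this is essentially Truman's argument of \cite{truman} recast in the language of the present paper, so the result is in any case available by reference to \cite{truman}. Fix a tame biquadratic $L=\mathbb{Q}(\sqrt m,\sqrt n)$, write $d=\gcd(m,n)$, $m=dm'$, $n=dn'$ with $\gcd(m',n')=1$, and $k=m'n'$; then $d,m',n'$ are square-free and pairwise coprime and $m,n,k\equiv 1\pmod 4$. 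Dividing out the divisor forced on $x$ (valid since $d,m',n'$ are square-free), the criteria of Proposition~\ref{profreebiquadQ3} say that $\mathcal{O}_L$ is free in the Hopf-Galois structure attached to $\sqrt m$, to $\sqrt n$, to $\sqrt k$ if and only if there are integers $(X,Y)$ with, respectively,
\[ dX^2+m'Y^2=\pm 2,\qquad dX^2+n'Y^2=\pm 2,\qquad m'X^2+n'Y^2=\pm 2. \]
Thus the three conditions are precisely the representability of $\pm 2$ by the form $aX^2+bY^2$ with $\{a,b\}$ ranging over the three pairs from $\{d,m',n'\}$, and simultaneous freeness in all three non-classical structures is equivalent to the simultaneous solvability of all three equations.

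For parts~1--3 I would then argue case by case. In part~1 all three square-free parts $m,n,k$ are positive (products of primes), so each left-hand side above is positive definite: the $-2$ alternative is impossible and the $+2$ alternative is excluded by a one-line size estimate once the unavoidable common factor is removed, so no non-classical structure is free. In part~2 one has $\{m,n,k\}=\{-p,-q,pq\}$ with $p,q\equiv 3\pmod 4$: the structure attached to $\sqrt{pq}$ fails by the same size/sign argument, while for the structures attached to $\sqrt{-p}$ and $\sqrt{-q}$ freeness is equivalent to the solvability of $x^2-py^2=\pm2$ and $x^2-qy^2=\pm2$. Here the classical input is that for a prime $\ell\equiv 3\pmod 4$ the class number of $\mathbb{Q}(\sqrt\ell)$ is odd (genus theory: the narrow class group has $2$-rank $1$ and trivial $4$-rank, and the fundamental unit has norm $+1$), so the prime above $2$ is principal; the absence of a unit of norm $-1$ then pins the sign of the generator's norm, so exactly one of $x^2-\ell y^2=\pm2$ is solvable. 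Hence both structures are free and exactly two work. Part~3 follows the same pattern with $\{m,n,k\}=\{-p,-qr,pqr\}$: the structure attached to $\sqrt{-p}$ is free by the previous fact, the one attached to $\sqrt{pqr}$ fails by size, and the hypothesis $r\equiv 1\pmod 8$ is what feeds into the genus-theoretic computation for $\mathbb{Q}(\sqrt{qr})$ needed to settle the third equation, leaving a single free structure; the solvability statements in this step are of the type recorded in \cite{andreescuandrica}.

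Part~4 is the heart of the matter, and I expect it to be the main obstacle. The plan is to convert the solvability over $\mathbb{Z}$ of the three equations into the local solvability of the associated conics over every completion $\mathbb{Q}_v$, which by the Hilbert symbol isotropy criterion amounts to identities of the shape $(-m,2d\varepsilon_1)_v=(-n,2d\varepsilon_2)_v=(-k,2m'\varepsilon_3)_v=1$ for all places $v$ and suitable signs $\varepsilon_i\in\{\pm1\}$. Multiplying the three families together, using bimultiplicativity of the Hilbert symbol and the identity $mnk=(dm'n')^2$, most contributions cancel — the factor carrying the square $mnk$ against $2$ disappears, and the odd-prime parts telescope — leaving a single reciprocity relation whose two sides are forced to have opposite parity by Hilbert reciprocity, which is the contradiction; this is the elementary shadow of Truman's idèlic proof. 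The hard part is exactly this last bookkeeping: one must keep careful track of the contributions at $v=2$ and $v=\infty$, of the signs of $m,n,k$, and of the undetermined $\varepsilon_i$ — which are further constrained by the residue conditions $\left(\tfrac{m'}{\ell}\right)=\left(\tfrac{2\varepsilon_1}{\ell}\right)$ for primes $\ell\mid d$ and the analogous ones — and conclude that no admissible choice of signs makes all three equations simultaneously solvable. Once this identity is isolated, parts~1--3 reappear as special cases in which the obstruction is already visible at a single prime, and the alternative of simply citing \cite{truman} remains available.
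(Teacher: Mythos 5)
Your reduction of the three freeness criteria to the symmetric system $dX^2+m'Y^2=\pm2$, $dX^2+n'Y^2=\pm2$, $m'X^2+n'Y^2=\pm2$ is correct, and your treatment of parts 1--3 is essentially sound: part 1 is the positive-definite size argument, and your genus-theoretic input for part 2 (odd class number of $\mathbb{Q}(\sqrt{\ell})$ for $\ell\equiv3\pmod4$, principality of the prime above $2$, and the norm of the fundamental unit fixing the sign) is a correct rendering of what is needed; part 3 is left vague at the decisive step (why $x^2-qry^2=\pm2$ is \emph{not} solvable), but you defer to the literature there, exactly as the paper itself does --- the paper proves none of parts 2--3 and simply cites \cite[Examples 7.1--7.3, Theorem 7.4]{truman}.

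The genuine gap is part 4, which you correctly identify as the heart of the matter and then do not prove: the Hilbert-symbol bookkeeping you describe is precisely the step you flag as ``the main obstacle,'' and it is left open (one would also have to be careful that you only use the implication from integral solvability to everywhere-local solvability, which is the right direction for a nonexistence proof, but the cancellation you hope for is never exhibited). The paper's route is far shorter and is already contained in your own part 1: since $k=mn/d^2$, the product $mnk=(mn/d)^2$ is positive, so at least one of $m,n,k$ is a positive square-free integer $a>1$; for the corresponding structure the equation $x^2+ay^2=\pm2g$ (with $g=\gcd(a,b)>0$ and $a$ odd) has positive definite left-hand side, the sign $-2g$ is impossible, and $+2g$ is excluded by the same size estimate you use in part 1 ($ay^2\le 2g$ with $y\ne0$ forces $a\le 2g$, hence $a=g$, hence $x^2=g=a$ and $y=\pm1$, impossible for square-free $a>1$). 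This is the paper's Lemma following the theorem, and it disposes of parts 1 and 4 simultaneously with no reciprocity argument at all. So your proposal is recoverable, but as written part 4 is an unfinished computation where a one-line observation suffices.
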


Proofs of these statements can be consulted in \cite[Example 7.1, Example 7.2, Example 7.3 and Theorem 7.4]{truman}. The first and the last ones are immediate consequences of the following result, which also holds for the wild case:

\begin{lema} Let $H$ be the non-classical Hopf-Galois structure of $L/\mathbb{Q}$ given by $\sqrt{a}$. If $a>1$, then $\mathcal{O}_L$ is not $\mathfrak{A}_H$-free.
\end{lema}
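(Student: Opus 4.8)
The plan is to reduce the statement, via Proposition~\ref{profreebiquadQ23reform}, to the non-solvability of a generalized Pell equation, and then settle that by elementary estimates. Here, as in Proposition~\ref{profreebiquadQ23reform}, $a$ is the square-free integer with $a\equiv1\,(\mathrm{mod}\,4)$ attached to $H$ (so $H\cong\mathbb{Q}^2\times\mathbb{Q}(\sqrt a)$), we write $L=\mathbb{Q}(\sqrt a,\sqrt b)$ with $b$ square-free and put $g=\mathrm{gcd}(a,b)$. By Proposition~\ref{profreebiquadQ23reform}, $\mathcal{O}_L$ is $\mathfrak{A}_H$-free if and only if at least one of $x^2+ay^2=2g$ and $x^2+ay^2=-2g$ is solvable in $\mathbb{Z}$. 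Since $a>0$ and $g\geq1$, the left-hand side is nonnegative whereas $-2g<0$, so the equation with $-2g$ is impossible; the whole problem therefore reduces to showing that $x^2+ay^2=2g$ has no solution in $\mathbb{Z}$ when $a>1$.

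The first step is to record the arithmetic of $g$: since $g\mid a$ and $a$ is odd (being $\equiv1\,(\mathrm{mod}\,4)$) and square-free, $g$ is odd and square-free, and the complementary divisor $a/g$ is an odd integer, hence $a/g=1$ or $a/g\geq3$. Now suppose $(x,y)\in\mathbb{Z}^2$ satisfies $x^2+ay^2=2g$. If $y=0$, then $x^2=2g$; but $2g$ is the product of $2$ with an odd square-free integer, hence square-free and $\geq2$, so it is not a perfect square, a contradiction. If $y\neq0$, then $x^2=2g-ay^2\geq0$ forces $ay^2\leq2g\leq2a$, so $y^2=1$ and $x^2=2g-a$. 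At this point I split into the two cases above: if $a/g\geq3$, then $2g\leq\tfrac23a<a$, so $x^2=2g-a<0$, impossible; and if $a/g=1$, i.e.\ $g=a$, then $x^2=a$, which is impossible since $a$ is square-free and $a>1$. This exhausts all possibilities, so $x^2+ay^2=2g$ is unsolvable and $\mathcal{O}_L$ is not $\mathfrak{A}_H$-free.

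No real obstacle is expected: essentially all the content lies in invoking Proposition~\ref{profreebiquadQ23reform}, and the remaining Diophantine step is routine. The only point requiring a little care is the case distinction on $g$: it is the hypothesis $a\equiv1\,(\mathrm{mod}\,4)$ that forces $g$ to be odd, hence $2g$ square-free, which is exactly what rules out the branch $y=0$; and one must remember that $g=a$ genuinely occurs (for instance when $b=-a$, so that $L=\mathbb{Q}(\sqrt a,\sqrt{-1})$), so that this case cannot be absorbed into the inequality $2g<a$ and needs its own one-line treatment. This lemma is meant to serve as the base case from which parts~$1$ and~$4$ of Theorem~\ref{tamebiquadtruman} follow.
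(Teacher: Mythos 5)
Your proof is correct and follows essentially the same route as the paper: invoke Proposition \ref{profreebiquadQ23reform}, discard the $-2g$ equation by positivity, and bound $ay^2\leq 2g$ to force $a=g$ and then a contradiction with $a$ square-free and $a>1$. You are in fact slightly more careful than the paper, which silently assumes $y\neq 0$ where you explicitly rule out $x^2=2g$ using that $2g$ is square-free.
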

\begin{proof}
Assume that $\mathcal{O}_L$ is $\mathfrak{A}_H$-free, so there are $x,y\in\mathbb{Z}$ such that $x^2+ay^2=2g$, where $g=\gcd(a,b)$. 
Now, $x^2=2g-ay^2$ gives $ay^2\leq2g$, and since $y$ is non-zero, necessarily $a\leq2g$. Since $a$ is odd and divisible by $g$, necessarily $a=g$. But then $x^2+gy^2=2g$ gives   $y=\pm1$ and $x^2=g$, which is only possible if $g=1$, which contradicts $a=g$.
\end{proof}

Now, we are interested in the cases corresponding to $a<0$. We can simplify the criteria for the freeness in Proposition \ref{profreebiquadQ3} by discarding one of the equations.

\begin{lema}\label{lemasolveq}
\begin{itemize}
    \item[1.] If $D\equiv3\pmod 8$ and $N\equiv2\pmod 8$, then  $x^2-Dy^2=N$ is not solvable.
    \item[2.] If $D\equiv7\pmod 8$ and $N\equiv6\pmod 8$, then  $x^2-Dy^2=N$ is not solvable.
\end{itemize}
\end{lema}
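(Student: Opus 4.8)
The plan is to obstruct solvability by a congruence argument modulo $8$, after first pinning down the parities of $x$ and $y$ via a reduction modulo $4$. The opening remark is that both hypotheses in fact impose $D\equiv 3\pmod 4$ (since $3\equiv 7\equiv 3\pmod 4$) and $N\equiv 2\pmod 4$ (since $2\equiv 6\equiv 2\pmod 4$), so the two cases can be treated in parallel and only diverge at the very last step.

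First I would assume, towards a contradiction, that $x^2-Dy^2=N$ admits a solution with $x,y\in\mathbb{Z}$. Reducing modulo $4$ and using $-D\equiv 1\pmod 4$ turns the equation into $x^2+y^2\equiv 2\pmod 4$. Since every square is congruent to $0$ or $1$ modulo $4$, this congruence forces $x$ and $y$ to be \emph{both} odd; this is the decisive step, because it is exactly what makes the subsequent reduction modulo $8$ conclusive.

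Next I would invoke the elementary fact that an odd square is $\equiv 1\pmod 8$, so that $x^2\equiv y^2\equiv 1\pmod 8$ and hence $N=x^2-Dy^2\equiv 1-D\pmod 8$. In case (1), $D\equiv 3\pmod 8$ yields $N\equiv -2\equiv 6\pmod 8$, contradicting $N\equiv 2\pmod 8$; in case (2), $D\equiv 7\pmod 8$ yields $N\equiv -6\equiv 2\pmod 8$, contradicting $N\equiv 6\pmod 8$. In both cases the contradiction shows that no integer solution exists.

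There is no genuine obstacle here: the only points requiring care are checking that the stated hypotheses really do force $D\equiv 3$ and $N\equiv 2$ modulo $4$, and recording the two standard facts that a square is $0$ or $1$ modulo $4$ and that an odd square is $1$ modulo $8$. As an independent sanity check one may instead enumerate directly the residues attained by $x^2-Dy^2$ modulo $8$: the attainable set is $\{0,1,4,5,6\}$ when $D\equiv 3\pmod 8$ and $\{0,1,2,4,5\}$ when $D\equiv 7\pmod 8$, so $2$, respectively $6$, is not represented, giving the same conclusion with slightly more bookkeeping.
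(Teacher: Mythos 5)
Your proof is correct and rests on the same idea as the paper's: a congruence obstruction modulo $8$ using that squares are $0$, $1$ or $4$ there. Your primary write-up first pins down the parities of $x$ and $y$ via a reduction modulo $4$ before concluding modulo $8$, but this is only a streamlined organization of the same computation, and the ``sanity check'' you append (enumerating the attainable residues $\{0,1,4,5,6\}$ and $\{0,1,2,4,5\}$) is exactly the argument the paper gives.
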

\begin{proof}
In both cases, we check that the equation $x^2-Dy^2=N$ is not solvable in $\mathbb{Z}/8\mathbb{Z}$, so it is not solvable in $\mathbb{Z}$. Indeed,
since the squares mod $8$ are $0$, $1$ and $4$, if $D\equiv3\pmod 8$, the possible values of $x^2-Dy^2$ mod $8$ are $0$, $1$, $4$, $5$ and $6$. On the other hand, if $D\equiv7\pmod 8$, the possible values of $x^2-Dy^2$ mod $8$ are $0$, $1$, $2$, $4$ and $5$.
\end{proof}

\begin{pro}\label{proeasiertame} Let $L=\mathbb{Q}(\sqrt{a},\sqrt{b})$ be a biquadratic extension of $\mathbb{Q}$ with $a\equiv1\pmod 4$ and let $H$ be the non-classical Hopf-Galois structure of $L/\mathbb{Q}$ given by $\sqrt{a}$. Denote $g=\mathrm{gcd}(a,b)$.
\begin{itemize}
    \item[1.] If $a\equiv1\pmod 8$ and $g\equiv1\pmod 4$ or $a\equiv5\pmod 8$ and $g\equiv3\pmod 4$, then $x^2+ay^2=-2g$ is not solvable in $\mathbb Z$. Hence, $\mathcal{O}_L$ is $\mathfrak{A}_H$-free if and only if $x^2+ay^2=2g$ is solvable.
    \item[2.] If $a\equiv1\pmod 8$ and $g\equiv3\pmod 4$ or $a\equiv5\pmod 8$ and $g\equiv1\pmod 4$, then $x^2+ay^2=2g$ is not solvable. Hence, $\mathcal{O}_L$ is $\mathfrak{A}_H$-free if and only if $x^2+ay^2=-2g$ is solvable.
\end{itemize}
\end{pro}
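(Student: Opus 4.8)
The plan is to route everything through the criterion of Proposition~\ref{profreebiquadQ23reform}, which asserts that $\mathcal{O}_L$ is $\mathfrak{A}_H$-free exactly when one of the two equations $x^2+ay^2=\pm2g$ has a solution in $\mathbb{Z}$. Consequently, to prove both the unsolvability assertions and the two ``Hence'' statements it suffices, in each of the four listed cases, to show that the \emph{designated} equation — namely $x^2+ay^2=-2g$ in item~1 and $x^2+ay^2=2g$ in item~2 — has no integer solution; the claimed equivalence then follows at once, since $\mathfrak{A}_H$-freeness becomes equivalent to the solvability of the one remaining equation.

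First I would dispose of the case $a>0$. Since $a\equiv1\pmod 4$ is square-free and $a=1$ is excluded (otherwise $\sqrt a\in\mathbb{Q}$ and $L/\mathbb{Q}$ would not be biquadratic), we have $a>1$, and then by the preceding Lemma together with Proposition~\ref{profreebiquadQ23reform} neither of $x^2+ay^2=\pm2g$ is solvable (and indeed $x^2+ay^2=-2g$ is visibly impossible when $a>0$ and $g\ge1$); so both sides of each claimed equivalence are false and nothing more is needed. This leaves $a<0$. Writing $D=|a|=-a>0$, the hypothesis $a\equiv1\pmod 4$ gives $D\equiv3\pmod 4$, and more precisely $a\equiv1\pmod 8$ is the same as $D\equiv7\pmod 8$ while $a\equiv5\pmod 8$ is the same as $D\equiv3\pmod 8$. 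Moreover $g\mid a$ with $a$ odd forces $g$ odd, so $g\equiv1\pmod 4$ yields $2g\equiv2$ and $-2g\equiv6\pmod 8$, whereas $g\equiv3\pmod 4$ yields $2g\equiv6$ and $-2g\equiv2\pmod 8$.

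The core of the argument is then just a bookkeeping of residues mod~$8$, after rewriting $x^2+ay^2=\pm2g$ as $x^2-Dy^2=\pm2g$, fed into Lemma~\ref{lemasolveq}. In item~1: if $a\equiv1\pmod 8$ and $g\equiv1\pmod 4$ then $D\equiv7$ and $-2g\equiv6\pmod 8$, so $x^2-Dy^2=-2g$ is unsolvable by Lemma~\ref{lemasolveq}(2); if $a\equiv5\pmod 8$ and $g\equiv3\pmod 4$ then $D\equiv3$ and $-2g\equiv2\pmod 8$, so it is unsolvable by Lemma~\ref{lemasolveq}(1). In item~2: if $a\equiv1\pmod 8$ and $g\equiv3\pmod 4$ then $D\equiv7$ and $2g\equiv6\pmod 8$, and Lemma~\ref{lemasolveq}(2) applies; if $a\equiv5\pmod 8$ and $g\equiv1\pmod 4$ then $D\equiv3$ and $2g\equiv2\pmod 8$, and Lemma~\ref{lemasolveq}(1) applies. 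In every case the designated equation has no integer solution, which is exactly what the statement requires, and the ``Hence'' clauses follow from Proposition~\ref{profreebiquadQ23reform} as explained.

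I do not expect a genuine obstacle here: the whole proof is short once Proposition~\ref{profreebiquadQ23reform} and Lemma~\ref{lemasolveq} are in place. The only points deserving a word of care are the reduction to $a<0$ (verifying that $a>0$ is covered by the earlier Lemma and that $a=1$ cannot occur) and the observation that $g$, being a divisor of the odd number $a$, must itself be odd — this is precisely what makes $\pm2g$ land on the residues $2$ and $6$ mod~$8$ demanded by Lemma~\ref{lemasolveq}.
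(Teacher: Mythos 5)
Your proof is correct and follows essentially the same route as the paper: both arguments reduce the designated equation $x^2+ay^2=\pm2g$ to $x^2-Dy^2=N$ with $D=-a$ and apply Lemma \ref{lemasolveq} after matching residues mod $8$ (using that $g\mid a$ is odd), with the ``Hence'' clauses following from Proposition \ref{profreebiquadQ23reform}. Your preliminary case split on the sign of $a$ is harmless but not needed, since the mod $8$ obstruction in Lemma \ref{lemasolveq} is insensitive to the sign of $D$.
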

\begin{proof}
\begin{itemize}
    \item[1.] If $a\equiv1\pmod 8$ and $g\equiv1\pmod 4$, then $-a\equiv7\pmod 8$ and $-2g\equiv6\pmod 8$, so the equation $x^2+ay^2=-2g$ is not solvable by the second statement of Lemma \ref{lemasolveq}. Identical conclusion is obtained if $a\equiv5\pmod 8$ and $g\equiv3\pmod 4$ by applying the first statement of Lemma \ref{lemasolveq}.
    \item[2.] It is completely analogous to the above.
\end{itemize}
\end{proof}

We can study easily the behaviour of $\mathcal{O}_L$ as $\mathfrak{A}_H$-module for low values of $|a|$.

\begin{example}\label{extamelowneg} Let $b\in\mathbb{Z}$ be a square-free integer and let $L=\mathbb{Q}(\sqrt{a},\sqrt{b})$ with $a\in\{-3,-7\}$. Let $H$ be the non-classical Hopf-Galois structure of $L/\mathbb{Q}$ given by $\sqrt{a}$. Then, $\mathcal{O}_L$ is $\mathfrak{A}_H$-free.
\end{example}
\begin{proof}
Let $g=\mathrm{gcd}(a,b)$. Since $a\equiv1\pmod 4$, by Proposition \ref{profreebiquadQ23reform}, $\mathcal{O}_L$ is $\mathfrak{A}_H$-free if and only if at least one of the equations $x^2+ay^2=\pm2g$ is solvable.

Assume that $a=-3$, so $g\in\{1,3\}$. For $g=1$, the equation $x^2-3y^2=-2$ is satisfied for $(x,y)=(1,1)$, and for $g=3$, the equation $x^2-3y^2=6$ is satisfied for $(x,y)=(3,1)$.

Now, assume that $a=-7$. In this case, we have $g\in\{1,7\}$. The equation $x^2-7y^2=2$ is satisfied for $(x,y)=(3,1)$, and the equation $x^2-7y^2=-14$ is satisfied for $(x,y)=(7,3)$.
\end{proof}

In general, the results in this part show that freeness is much less common in these cases than in the previous one,  biquadratic extensions of first type. The main reason is that, for those extensions, the presence of $4$, which is a square, in the independent term of the first equation in Proposition \ref{profreebiquadQ1} permitted to obtain solutions of the Pell equations under more general conditions.

Let us change the approach and focus on the Pell equations themselves. The consistency of Propositions \ref{profreebiquadQ2} and \ref{profreebiquadQ3} gives the following result on the solvability:

\begin{coro} Let $m,n$ be square-free integers with $m\equiv1\pmod 4$, $d=\mathrm{gcd}(m,n)$ and $k=\frac{mn}{d^2}$. Then:
\begin{itemize}
    \item[1.] At least one of the equations $x^2+my^2=\pm2d$ has solutions in $\mathbb{Z}$ if and only if so has at least one of the equations $x^2+my^2=\pm2\frac{m}{d}$.
\end{itemize}
If we additionally assume that $n\equiv1\pmod 4$, then:
\begin{itemize}
    \item[2.] At least one of the equations $x^2+ny^2=\pm2d$ has solutions in $\mathbb{Z}$ if and only if so has at least one of the equations $x^2+ny^2=\pm2\frac{n}{d}$.
    \item[3.] At least one of the equations $x^2+ky^2=\pm2\frac{n}{d}$ has solutions in $\mathbb{Z}$ if and only if so has at least one of the equations $x^2+ky^2=\pm2\frac{m}{d}$.
\end{itemize}
\end{coro}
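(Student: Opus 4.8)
The plan is to reduce all three equivalences to Proposition \ref{profreebiquadQ23reform} by exploiting that the biquadratic field $L=\mathbb{Q}(\sqrt m,\sqrt n)$ admits three presentations, one for each pair drawn from its quadratic radicals $\sqrt m,\sqrt n,\sqrt k$, while the non-classical Hopf-Galois structure ``given by $\sqrt a$'' depends only on the subfield $\mathbb{Q}(\sqrt a)\subseteq L$ and not on the chosen presentation. Concretely, writing $m=dm'$, $n=dn'$ with $\gcd(m',n')=1$, square-freeness of $m$ and $n$ forces $\gcd(d,m')=\gcd(d,n')=1$, so that $k=\frac{mn}{d^2}=m'n'$ is a product of pairwise coprime square-free integers and hence square-free, and a one-line gcd computation then gives $\gcd(m,k)=m'=\frac md$ and $\gcd(n,k)=n'=\frac nd$, alongside $\gcd(m,n)=d$.

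For part 1, let $H$ be the non-classical Hopf-Galois structure associated with the subfield $\mathbb{Q}(\sqrt m)$. Applying Proposition \ref{profreebiquadQ23reform} to the presentation $L=\mathbb{Q}(\sqrt m,\sqrt n)$ (applicable since $m\equiv1\,(\mathrm{mod}\,4)$ and $n$ is square-free, with $g=\gcd(m,n)=d$) shows that $\mathcal{O}_L$ is $\mathfrak{A}_H$-free if and only if at least one of $x^2+my^2=\pm2d$ is solvable; applying it instead to $L=\mathbb{Q}(\sqrt m,\sqrt k)$ (applicable since $k$ is square-free, with $g=\gcd(m,k)=\frac md$) shows that $\mathcal{O}_L$ is $\mathfrak{A}_H$-free if and only if at least one of $x^2+my^2=\pm2\frac md$ is solvable. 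Since both conditions are equivalent to the freeness of one and the same module, they are equivalent to each other, which is part 1. Part 2 follows by the identical argument with $\sqrt n$ in place of $\sqrt m$: this is where the extra hypothesis $n\equiv1\,(\mathrm{mod}\,4)$ is used, and one invokes $\gcd(n,m)=d$ and $\gcd(n,k)=\frac nd$. For part 3 one uses the structure associated with $\mathbb{Q}(\sqrt k)$; here one must first note $k\equiv1\,(\mathrm{mod}\,4)$, which holds because $d$ is odd (it divides the odd number $m$), so $d^2\equiv1\,(\mathrm{mod}\,4)$ and hence $k\equiv mn\equiv1\,(\mathrm{mod}\,4)$, and then Proposition \ref{profreebiquadQ23reform} applied to $L=\mathbb{Q}(\sqrt k,\sqrt n)$ and to $L=\mathbb{Q}(\sqrt k,\sqrt m)$ gives, via $\gcd(k,n)=\frac nd$ and $\gcd(k,m)=\frac md$, the asserted equivalence between the solvability of $x^2+ky^2=\pm2\frac nd$ and that of $x^2+ky^2=\pm2\frac md$.

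There is no conceptually hard step: the whole argument is a bookkeeping application of Proposition \ref{profreebiquadQ23reform}, and the only things needing genuine verification are the arithmetic facts that $k$, $\frac md$ and $\frac nd$ are square-free (so that they may serve as the second radicand in the proposition), the three gcd identities, and, for part 3, the congruence $k\equiv1\,(\mathrm{mod}\,4)$. The point worth flagging is the one about Hopf-Galois structures depending only on subfields, since it is precisely what allows the two applications of the proposition to refer to the same module $\mathcal{O}_L$ over the same associated order, and hence to be chained.
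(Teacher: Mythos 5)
Your proof is correct and follows essentially the same route as the paper: both arguments exploit that the freeness of $\mathcal{O}_L$ over the associated order in a fixed non-classical Hopf-Galois structure is independent of which pair of radicands from $\{\sqrt{m},\sqrt{n},\sqrt{k}\}$ is used to present $L$, and then read off the equivalence by comparing the two resulting Pell criteria (the paper phrases this as exchanging $m$, $n$, $k$ in Propositions \ref{profreebiquadQ2} and \ref{profreebiquadQ3}, while you route everything through the unified Proposition \ref{profreebiquadQ23reform}). Your explicit verification of the gcd identities, the square-freeness of $k$, and the congruence $k\equiv1\pmod 4$ makes precise what the paper leaves to its earlier remark that $m$, $n$, $k$ are ``completely exchangeable.''
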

\begin{proof}
Let $L=\mathbb{Q}(\sqrt{m},\sqrt{n})$ and let $H_1$, $H_2$, $H_3$ be its non-classical Hopf-Galois structures as before. Since $n\equiv k\pmod 4$, they can be exchanged in Propositions \ref{profreebiquadQ2} and \ref{profreebiquadQ3}. Moreover, when $n\equiv1\pmod 4$, $m$, $n$ and $k$ can be reordered indistinctly in Proposition \ref{profreebiquadQ3}.
\begin{itemize}
    \item[1.] We know that $\mathcal{O}_L$ is $\mathfrak{A}_{H_1}$-free if and only if at least one of the equations $x^2+my^2=\pm2d$ is solvable in $\mathbb{Z}$. Now, we exchange $n$ and $k$, so the $\mathfrak{A}_{H_1}$-freeness of $\mathcal{O}_L$ is also equivalent to the solvability in $\mathbb{Z}$ of at least one of the equations $x^2+my^2=\pm2\frac{m}{d}$. This implies immediately the statement.
    \item[2.] In this case, we use the $\mathfrak{A}_{H_2}$-freeness criterion as link for the two conditions, from which the result follows, as exchanging $m$ and $k$ in the equations $x^2+ny^2=\pm2d$ gives $x^2+ny^2=\pm2\frac{n}{d}$.
    \item[3.] Exchanging $m$ and $n$ in the equations $x^2+ky^2=\pm2\frac{n}{d}$ gives $x^2+ky^2=\pm2\frac{m}{d}$, so the result follows from the consistency of the criteria given for the $\mathfrak{A}_{H_3}$-freeness of $\mathcal{O}_L$.
\end{itemize}
\end{proof}

\section{$\mathfrak A_H$-locally free extensions that are not $\mathfrak A_H$-free}\label{sectexampleslocalglobal}

It is remarkable that, for the intended classes of extensions in this paper, our techniques allow to completely determine the freeness of the ring of integers in the different Hopf-Galois structures in the global context, i.e. for extensions of number fields. Normally, this is a tough problem which is studied through the local freeness.
Namely, if $L/K$ is an $H$-Galois extension of number fields, we say that $\mathcal{O}_L$ is $\mathfrak{A}_H$-locally free if $\mathcal{O}_{L,P}$ is $\mathfrak{A}_{H,P}$-locally free for every prime ideal $P$ of $\mathcal{O}_K$, where $$\mathcal{O}_{L,P}\coloneqq\mathcal{O}_L\otimes_{\mathcal{O}_K}\mathcal{O}_{K,P},$$ $$\mathfrak{A}_{H,P}\coloneqq\mathfrak{A}_H\otimes_{\mathcal{O}_K}\mathcal{O}_{K,P},$$ and $\mathcal{O}_{K,P}$ is the $P$-adic completion of $\mathcal{O}_K$.

In this section, we give account of the local context in the problem we have studied in order to illustrate that local freeness does not imply global freeness. Namely, we will give an example of quartic extension $L/\mathbb{Q}$ such that for a non-classical Hopf-Galois structure $H$ of $L/\mathbb{Q}$, $\mathcal{O}_L$ is $\mathfrak{A}_H$-locally free but $\mathcal{O}_L$ is not $\mathfrak{A}_H$-free. It will be easier (and possible) to restrict ourselves to the tame case. Indeed, among the quartic extensions of $\mathbb{Q}$, the tamely ramified ones are those for which all ramified rational primes are odd. Now, we have the following result due to Truman (see \cite[Theorem 6.2]{truman3}):

\begin{teo}\label{tamelocallyfree} Let $L/K$ be a tamely ramified abelian extension of number fields, and let $H=L[N]^G$ be a Hopf-Galois structure on $L/K$. Assume that $H$ is commutative. Then $\mathcal{O}_L$ is $\mathcal{O}_L[N]^G$-locally free.
\end{teo}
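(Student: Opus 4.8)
The plan is to use the tameness hypothesis to force $\mathcal{O}_L$ to be a \emph{projective} module over $\Lambda\coloneqq\mathcal{O}_L[N]^G$, and then to use the commutativity of $H$ to upgrade projectivity to freeness. First I would record the standard facts about the setup. One checks directly that $\Lambda$ is an $\mathcal{O}_K$-order in $H$ with $\Lambda\subseteq\mathfrak{A}_H$, so that $\mathcal{O}_L$ is indeed a $\Lambda$-module. Moreover $H$ is a separable $K$-algebra: $H\otimes_K L\cong L[N]$ is semisimple because $L$ has characteristic zero, hence $H$ is semisimple and $H\cong\prod_i K_i$ with the $K_i$ finite separable extensions of $K$; since $L$ is free of rank one over $H$, the $K_i$-component $L^{(i)}$ of $L=\prod_i L^{(i)}$ satisfies $\dim_{K_i}L^{(i)}=1$ for every $i$. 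Finally, because $H$ is commutative, $N$ is abelian, so $L[N]$ and hence its subring $\Lambda$ are commutative; being module-finite over the complete discrete valuation ring $\mathcal{O}_K$, $\Lambda$ decomposes as $\prod_i\Lambda_i$ with each $\Lambda_i$ a complete local $\mathcal{O}_K$-order in $K_i$. In particular $\Lambda$ is semilocal.

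The heart of the argument is that $\mathcal{O}_L$ is projective as a $\Lambda$-module. This is Noether's theorem transported to the Hopf-Galois setting: tameness of $L/K$ is equivalent to $\mathcal{O}_L$ being cohomologically trivial, and since $\mathcal{O}_L$ is $\mathcal{O}_K$-free this forces $\mathcal{O}_L$ to have projective dimension zero over $\mathfrak{A}_H=\Lambda$; alternatively one invokes directly the known fact that for a tamely ramified Galois extension of local fields and any Hopf-Galois structure on it, $\mathfrak{A}_H=\mathcal{O}_L[N]^G$ and $\mathcal{O}_L$ is locally free, hence (over the complete base) projective, over this order. This is the step I expect to be the main obstacle: it carries the full strength of Noether's theorem in Hopf-Galois guise and is precisely what makes the tameness hypothesis indispensable. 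Note that commutativity is not used here — the non-commutative case genuinely fails, and the point of that hypothesis appears only in the final step.

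To finish, decompose $\mathcal{O}_L=\prod_i\mathcal{O}_L^{(i)}$ compatibly with $\Lambda=\prod_i\Lambda_i$. By the previous step each $\mathcal{O}_L^{(i)}$ is a finitely generated projective module over the commutative local ring $\Lambda_i$, hence free, of some rank $r_i$. Every $r_i\geq 1$, since $L$ is a faithful $H$-module, so $L^{(i)}\neq 0$ and therefore $\mathcal{O}_L^{(i)}\neq 0$. Tensoring with $K$ shows $L^{(i)}$ is free of rank $r_i$ over $K_i$, but $\dim_{K_i}L^{(i)}=1$, so $r_i=1$ for all $i$. Hence $\mathcal{O}_L^{(i)}\cong\Lambda_i$ for every $i$, which gives $\mathcal{O}_L\cong\Lambda$ as $\Lambda$-modules; that is, $\mathcal{O}_L$ is free of rank one over $\mathcal{O}_L[N]^G$, as desired. (Equivalently: over a semilocal commutative ring a finitely generated projective module of constant rank one is free.) In short, once the tame projectivity statement is granted, the rest is formal: commutativity of $H$ makes $\mathcal{O}_L[N]^G$ a semilocal commutative ring, over which a rank-one projective is automatically free.
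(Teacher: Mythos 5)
The paper does not prove this statement at all: it is quoted as a black box from Truman's paper \emph{Commutative Hopf--Galois module structure of tame extensions} (the citation [Tru18, Theorem 4.6]) and is only used as an input to reduce the local analysis to the prime $2$. So there is no internal proof to compare against, and your proposal has to stand on its own. Its final reduction does stand: since $H$ is commutative and separable and $\Lambda=\mathcal{O}_L[N]^G$ is module-finite over the complete discrete valuation ring $\mathcal{O}_K$, $\Lambda$ is a finite product of commutative local rings, and a finitely generated projective $\Lambda$-module whose generic fibre $L$ is $H$-free of rank one must be $\Lambda$-free of rank one. That part is correct and is the only place commutativity enters.

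The genuine gap is the step you yourself flag as the main obstacle: the projectivity of $\mathcal{O}_L$ over $\Lambda$, for which neither justification you offer works. Cohomological triviality of $\mathcal{O}_L$ (equivalent to tameness) concerns the $G$-module structure and yields projectivity over $\mathcal{O}_K[G]$, i.e.\ over the classical structure; it says nothing about the module structure over $\mathcal{O}_L[N]^G$. The descent that would carry an $\mathcal{O}_L[N]$-statement down to an $\mathcal{O}_L[N]^G$-statement is available integrally only when $\mathcal{O}_L/\mathcal{O}_K$ is unramified, which is exactly the case one cannot reduce to here; for ramified (even tamely ramified) extensions $\mathcal{O}_L\otimes_{\mathcal{O}_K}\mathcal{O}_L\neq\mathrm{Map}(G,\mathcal{O}_L)$ and the Greither--Pareigis dictionary breaks at the integral level. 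Your ``alternative'' route --- invoking the known fact that for any tame extension and any Hopf--Galois structure one has $\mathfrak{A}_H=\mathcal{O}_L[N]^G$ with $\mathcal{O}_L$ locally free over it --- is circular: over a complete local base, locally free is free, so this \emph{is} the theorem to be proved; moreover it is not a known fact in the stated generality (for noncommutative $N$ the question remains open, which is precisely why the theorem carries the commutativity hypothesis). The entire content of Truman's result lies in establishing this step, which he does by a direct analysis of the order $\mathcal{O}_L[N]^G$ and an explicit construction of a generator for tame extensions, not by quoting a projectivity principle. As written, your argument proves only the (easy) implication ``projective $\Rightarrow$ free'' and outsources the substance of the theorem.
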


One can easily construct examples of biquadratic extensions with the already mentioned properties from the work of Truman. Indeed, if $L/\mathbb{Q}$ is a tamely ramified biquadratic extension, by Theorem \ref{tamebiquadtruman}, there is some Hopf-Galois structure $H$ on $L/\mathbb{Q}$ such that $\mathcal{O}_L$ is not $\mathfrak{A}_H$-free. However, we know by Theorem \ref{tamelocallyfree} that $\mathcal{O}_L$ is $\mathfrak{A}_H$-locally free.

Using our results, we may construct an example of quartic cyclic extension with this behaviour.

\begin{example}\normalfont We consider $L=\mathbb{Q}(\sqrt{65+4\sqrt{65}})$, which is cyclic of degree $4$ with $a=1$, $b=4$, $c=7$ and $d=65$, hence corresponds to Case 5 in Section \ref{sectcyclicquarticQ}. This number field corresponds to the identifier \cite[\href{https://www.lmfdb.org/NumberField/4.4.274625.2}{Number field 4.4.274625.2}]{lmfdb} in the LMFDB database. We can see that its ramified primes are $5$ and $13$, so it is tamely ramified and hence $\mathfrak{A}_H$-locally free for every Hopf-Galois structure $H$ on $L/\mathbb{Q}$.

Let us check that $\mathcal{O}_L$ is not (globally) $\mathfrak{A}_H$-free. By Theorem \ref{teofreenesscyclic}, $\mathcal{O}_L$ is $\mathfrak{A}_H$-free if and only if the quadratic form $[7,8,-7]$ represents $1$. But the cycle of indefinite reduced forms of this quadratic form is $$[7,8,-7],[-7,6,8],[8,10,-5],[-5,10,8],$$ which does not contain the principal form. Hence $[7,8,-7]$ does not represent $1$ and $\mathcal{O}_L$ is not $\mathfrak{A}_H$-free.
\end{example}

\section*{Acknowledgements} 

The authors want to thank Paul Truman and Nigel Byott for enriching discussions and their insightful remarks on the material covered; as well as the referee, for the same reasons and also for their suggestions in order to improve the quality and readability of this paper.


\begin{thebibliography}{AA15}

\bibitem[AA15]{andreescuandrica} {\scshape Andrica, D.; Andreescu, T.} Quadratic Diophantine Equations. {\em 2015th ed.} Springer, 2015.

\bibitem[BV07]{buchmannvollmer} {\scshape Buchmann, J.; Vollmer, U.} Binary Quadratic Forms: An Algorithmic Approach. {\em Algorithms and Computations in Mathematics, Volume 20.} Springer, 2007.

\bibitem[Byo02]{byottp2}{\scshape Byott, N.P.} Integral Hopf Galois structures on extensions of degree $p^2$ {\em Journal of Algebra} Vol. 248, Issue 1 (2002), 334-365.

\bibitem[Chi00]{childs} {\scshape Childs, L.N.} Taming wild extensions: Hopf algebras and local Galois module theory. First edition. {\em Mathematical Surveys and Monographs 80. American Mathematical Society, 2000}.

\bibitem[Chi87]{childs2} {\scshape Childs, L.N.} Taming wild extensions with Hopf algebras. {\em Transactions of the American Mathematical Society} 304.1 (1987), 111-140.

\bibitem[Coh07]{cohen} {\scshape Cohen, H.} Number Theory Volume I: Tools and Diophantine Equations. {\em Graduate texts in mathematics}. Springer-Verlag New York, 2007.

\bibitem[FT92]{frohlichtaylor} {\scshape Frohlich, A.; Taylor, M.J.} Algebraic number theory. {\em Cambridge Studies in Advanced Mathematics}. Cambridge University Press, 1992.

\bibitem[GP87]{greitherpareigis} {\scshape Greither, C.; Pareigis, B.} Hopf Galois theory for separable field extensions. {\em Journal of Algebra} Vol. 106, Issue 1 (1987), 239-258.

\bibitem[GR22]{gilrio} {\scshape Gil-Muñoz, D.; Rio, A.} Induced Hopf Galois structures and their Local Hopf Galois Modules. {\em Publicacions Matemàtiques} Vol. 66 Issue 1 (2022), 99-128.

\bibitem[Har+87]{hardyetal} {\scshape Hardy, K.; Hudson, R.H.; Richman, D.; Williams, K.S.; Holtz, N.M.} Calculation of the Class Numbers of Imaginary Cyclic Quartic
Fields. {\em Mathematics of Computation} 49.180 (1987), 615–620.

\bibitem[HW90]{hudsonwilliams} {\scshape R.H. Hudson and K. S. Williams.} The integers of a cyclic quartic field. {\em Rocky
Mountain Journal of Mathematics} 20.1 (1990), 145–150.

\bibitem[Joh]{johnston} Johnston, H. {\scshape Notes on Galois modules.} Available at \href{https://www.researchgate.
net/publication/228393641_NOTES_ON_GALOIS_MODULES}.

\bibitem[Kap49]{kaplansky} {\scshape I. Kaplansky} Elementary divisors and modules {\em Transactions of
the American Mathematical Society} 66 (1949) 464-491.

\bibitem[Leo59]{leopoldt} {\scshape Leopoldt, H.} \"Uber die Hauptordnung der ganzen Elemente eines abelschen Zahlk\"orpers. {\em Journal f\"ur die reine und angewandte Mathematik} 1959.201 (1959) 119-149.

\bibitem[LMFDB]{lmfdb} {\scshape The LMFDB Collbaoration.} The L-functions and modular forms database. \url{http://www.lmfdb.org} 2021.

\bibitem[Mar77]{marcus} {\scshape Marcus, D.A.} Number Fields {\em Universitext}. Springer, 1997.

\bibitem[Noe32]{noether} {\scshape Emmy Noether.} Normalbasis bei Körpern ohne höhere Verzweigung. {\em Journal für die reine und angewandte Mathematik} 1932.167(1932), 147–152.

\bibitem[Tru12]{truman} {\scshape Truman,  P.J.} Hopf-Galois module structure of tame biquadratic extensions. {\em Journal de Théorie des Nombres de Bordeaux} 24.1 (Mar. 2012), 173–199.

\bibitem[Tru18]{truman3} {\scshape Truman,  P.J.} Commutative Hopf–Galois module structure of tame extensions. {\em Journal of Algebra} 503 (2018), 389-408.

\end{thebibliography}
\end{document}